\newcommand{\R}{\mathbb{R}}
\newcommand{\N}{\mathbb{N}}
\newcommand{\V}{\mathbb{V}}
\newcommand{\RV}{\mathbb{RV}}
\newcommand{\IV}{\mathbb{IV}}
\newcommand{\D}{\nabla}
\newcommand{\pt}{\partial_t}
\newcommand{\dv}{\mathrm{div}}
\newcommand{\supp}{\mathrm{supp}\,}
\newcommand{\e}{\mathbf{e}}
\newcommand{\lv}{\lvert}
\newcommand{\rv}{\rvert}
\newcommand{\lV}{\lVert}
\newcommand{\rV}{\rVert}
\theoremstyle{plain}
\newtheorem{theorem}{Theorem}[section]
\newtheorem*{theorem*}{Theorem}
\newtheorem{lemma}[theorem]{Lemma}
\newtheorem*{lemma*}{Lemma}
\newtheorem{prop}[theorem]{Proposition}
\newtheorem*{prop*}{Proposition}
\newtheorem{corollary}[theorem]{Corollary}
\newtheorem*{corollary*}{Corollary}
\theoremstyle{definition}
\newtheorem*{example*}{e.g.}
\newtheorem{remark}[theorem]{Remark}
\newtheorem*{remark*}{Remark}
\newtheorem*{assumption*}{Assumption}
\numberwithin{equation}{section}
\newtheorem{definition}[theorem]{Definition}
\newtheorem*{definition*}{Definition}
\title{Existence of weak mean curvature flow
	with prescribed contact angle via elliptic regularization}
\author{Kiichi Tashiro}
\subjclass{53E10 (primary), 28A75, 35K75}
\address{(K.Tashiro) Dipartimento di Matematica, Universit\`{a} degli Studi di Milano, Via Saldini 50, I-20133 Milano (MI), Italy (former: Department of Mathematics, Institute of Science Tokyo, 2-12-1 Ookayama, Meguroku, Tokyo 152-8551, Japan)}
\email{kiichi.tashiro@unimi.it (former: tashiro.k.ai@m.titech.ac.jp, tashiro.k.0e2f@m.isct.ac.jp)}
\begin{document}
	
	\begin{abstract}
		In the present paper, we study the existence of Brakke-type weak mean curvature flow satisfying a prescribed contact angle condition for a general angle $ \theta \in ( 0 , \pi ) $ via Ilmanen's regularization. The main ingredients of the result are the extension of Ilmanen's regularization to the capillarity and the derivation of the first variation estimates for the interior and wetted boundary varifolds separately.
	\end{abstract}
	
	\maketitle
	\markboth{Kiichi Tashiro}{Weak mean curvature flow with contact angle}
	
	\section{Introduction}
	The mean curvature flow (henceafter referred to as MCF), which emerged in the context of material science,
	has been studied by numerous researchers as one of the most important geometric flow problems nowadays. The unknown of the MCF is a one-parameter family $ \{ \Gamma_t \}_{ t \geq 0 } $ such that the normal velocity of $ \Gamma_t $ equals its mean curvature vector at each point for every time. As a typical boundary condition, we study the contact angle condition, which is dedicated to surface tension and the wettability of the container, for the MCF. In general, it is well known that singular behaviors such as shrinking and neck pinching occur in MCFs. Moreover, a MCF with the contact angle may pop upon tangential contact with the boundary. To consider the solutions that allow such singularities, we need to use extensive tools from geometric measure theory and a weak notion of MCFs.
	\vskip.3\baselineskip
	The purpose of this paper is to investigate capillary boundary conditions for geometric flow problems. As a foundational result in the varifold setting, Kagaya and Tonegawa \cite{kagaya2017contactangle} introduced a notion of contact angle condition and established a monotonicity formula extending the free-boundary monotonicity formula of Gr\"{u}ter and Jost \cite{gruter1986allard} to the contact angle setting. Subsequently, De Masi investigated the rectifiability of the contact set between a varifold and the boundary in \cite{de2021rectifiability}, while De Masi and De Philippis developed a min-max theory for varifolds with contact angle boundary conditions in \cite{de2025min}. These results are summarized in De Masi's Ph.D. thesis \cite{de2022existence}. There have been significant progress on the regularity and geometric properties of contact angle varifolds and capillary problems arising from Gauss' free energy functional and we briefly mention recent studies; \cite{bevilacqua2025classical,de2025regularity,de2024regularity,kagaya2018singular,king2022smoothness,king2022plateau,wang2024allard,wang2024monotonicity}. On the other hand, there are fewer studies on the MCF with contact angle. In this paper, we consider a notion of Brakke-type mean curvature flows in the framework of geometric measure theory and establish their global-in-time existence via the elliptic regularization \cite{ilmanen1994elliptic} under the general situation whenever possible. Roughly speaking, the main result is the following:
	\begin{theorem*}
		Let $ M $ be a closed bounded domain with the smooth boundary and $ E_0 \subset M $ be a set of finite perimeter. Then there exists a Brakke flow $ \{ V_t \}_{ t \geq 0 } $ in $ M $ such that $ \lV V_0 \rV = \mathcal{ H }^n \llcorner_{ \partial^* E_0 \cap M^{ \circ } } $, $ V_t $ has the contact angle $ \theta $ and is integral for almost every time $ t \geq 0 $. That is to say, for all $ \phi \in C^1_c ( M \times [ 0 , \infty ) ; [ 0 , \infty ) ) $ and all $ 0 \leq t_1 < t_2 < \infty $, we have
		\[
		\lV V_{ t_2 } \rV ( \phi ( \cdot , t_2 ) ) - \lV V_{ t_1 } \rV ( \phi ( \cdot , t_1 ) ) \leq \int^{ t_2 }_{ t_1 } \int_M ( \D \phi - \phi H_{ V_t } ) \cdot H_{ V_t } + \pt \phi\,d \lV V_t \rV d t,
		\]
		where $ H_{ V_t } $ is the generalized mean curvature of $ V_t $.
	\end{theorem*}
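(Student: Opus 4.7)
The plan is to follow Ilmanen's elliptic regularization scheme \cite{ilmanen1994elliptic}, extended so that the one-parameter family of weighted minimizers in $ M \times \R $ carries capillary boundary data on $ \partial M \times \R $. The underlying idea is unchanged: a translator for a tilted weighted mean curvature equation in $ M \times \R $, when horizontally sliced and rescaled in time, is a smooth mean curvature flow; after passing $ \epsilon \to 0 $ one obtains a Brakke flow, and the capillary weight forces the prescribed contact angle $ \theta $ to be inherited slice-by-slice.

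Concretely, I would fix the cylindrical set $ E_0 \times ( -\infty , 0 ] \subset M \times \R $ as a Dirichlet datum and minimize the weighted capillary energy
\[
\mathcal{ F }_{ \epsilon } ( E ) = \int_{ \partial^* E \cap ( M^{ \circ } \times \R ) } e^{ - z / \epsilon }\,d \mathcal{ H }^{ n + 1 } - \cos \theta \int_{ E \cap ( \partial M \times \R ) } e^{ - z / \epsilon }\,d \mathcal{ H }^{ n + 1 }
\]
over finite-perimeter sets in $ M \times \R $ that agree with the datum outside a large cylinder. Existence of a minimizer $ E_{ \epsilon } $ follows from $BV$-compactness and lower semicontinuity once $ \lv \cos \theta \rv < 1 $ is used to bound the boundary term by a fraction of the interior term. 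The reduced boundary defines an $ ( n + 1 ) $-varifold $ \tilde V_{ \epsilon } $, whose Euler--Lagrange equations split into the weighted translating mean curvature equation in the interior and the contact-angle identity on $ \partial M \times \R $, precisely matching the interior and wetted boundary first-variation framework of \cite{kagaya2017contactangle,de2021rectifiability}.

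Next I would convert $ \tilde V_{ \epsilon } $ into a time-parametrized family $ \{ V_t^{ \epsilon } \}_{ t \geq 0 } $ via the Ilmanen change of variables $ z = t / \epsilon $ and horizontal slicing, so that uniform mass and first-variation bounds on $ \tilde V_{ \epsilon } $ translate via the coarea formula into uniform-in-$ \epsilon $ $ L^2_{ t , x } $ bounds on $ H_{ V_t^{ \epsilon } } $. Splitting off the trace on $ \partial M $ produces a wetted-boundary varifold $ W_t^{ \epsilon } $ carrying the capillary data; extracting a diagonal subsequence and applying Allard-type rectifiability together with the almost-minimality of $ E_{ \epsilon } $ on small balls gives integrality of the limit $ V_t $ for a.e.\ $ t $. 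Brakke's inequality is then obtained by testing the first variation of $ \tilde V_{ \epsilon } $ against a vector field of the form $ \phi ( \mathbf{ H }_{ \tilde V_{ \epsilon } } + \partial_z ) $, integrating over horizontal slabs, and passing to the limit, while the contact angle survives because $ \cos \theta $ appears only as a fixed weight in every variational identity.

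The main obstacle will be the simultaneous control of the interior and boundary first variations in the limit: the natural Brakke test field $ \phi \mathbf{ H } $ is not tangent to the contact-angle cone along $ \partial M $, so the boundary contribution does not vanish and must be absorbed against $ W_t $ rather than dropped. My strategy is to project the test field onto the contact-angle cone on $ \partial M \times \R $, bound the projection error by $ O ( \epsilon^{ 1 / 2 } ) $ using the interior $ L^2 $ estimate on $ H_{ V_t^{ \epsilon } } $, and close the limit via independent convergence of the interior and wetted boundary varifolds. A secondary difficulty is the noncompactness of $ M \times \R $ together with the concentration of $ e^{ - z / \epsilon } $ near $ z = -\infty $, which will be handled by a barrier/truncation argument on large cylinders, ensuring that $ E_{ \epsilon } $ and its limit are independent of the truncation parameter.
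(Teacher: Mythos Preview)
Your overall scheme---minimize a weighted capillary energy in $M\times\R$, read off the translating soliton equation, translate and pass to the limit---matches the paper's. You also correctly locate the main obstacle in the separate control of the interior and boundary first variations. But your proposed resolution does not work, and the gap is precisely the one the paper has to work hardest to close.

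The difficulty is not in deriving Brakke's inequality for fixed $\varepsilon$: once $\phi$ has $\nabla\phi$ tangent to $\partial M\times\R$, the test field in the Brakke computation is already admissible and no projection is needed. The real problem is \emph{compactness as $\varepsilon\to 0$}. The contact-angle identity only bounds $\delta V^\varepsilon + a\,\delta W^\varepsilon$ against tangential fields; it gives no control on $\delta V^\varepsilon$ and $\delta W^\varepsilon$ separately, and without separate bounds you cannot invoke Allard-type compactness to get integral limits $V_t$ and $W_t$ individually (cf.\ the paper's Remark on oscillating boundaries: a contact-angle pair can have $\lV\delta V\rV$ and $\lV\delta W\rV$ both infinite). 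Your ``project onto the contact-angle cone and bound the error by $O(\varepsilon^{1/2})$'' does not address this: the normal component of a test field, when fed into $\delta V^\varepsilon$, produces exactly the boundary co-normal measure $\sigma_i$ supported on $\partial M\times\R$, and this term has no reason to be small in $\varepsilon$---it is a fixed geometric quantity depending on the trace of $\partial^*S^\varepsilon$ on $\partial M$, not on the mean curvature.

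What the paper actually does is use the \emph{minimizing} structure of $S^\varepsilon$, not just the Euler--Lagrange equation. By the De~Philippis--Maggi boundary regularity theorem for capillary almost-minimizers, the interior sheet meets $\partial M\times(0,\infty)$ at angle $\theta$ in a strong enough sense that the co-normal direction $\eta_{\sigma_i}$ has a definite (nonzero, $\varepsilon$-independent) component along the outward normal $\nu_{\partial M\times(0,\infty)}$. One then tests $\delta V^\varepsilon$ against $\varphi\,\nabla' d$, where $d$ is the signed distance to $\partial M\times(0,\infty)$, and the fixed normal component of $\eta_{\sigma_i}$ lets you bound $\sigma_i(B_{r/2})$ by $\int_{B_r}(1+|H^\varepsilon|)\,d\mathcal{H}^{n+1}$ with constants independent of $\varepsilon$ (this is De~Masi's estimate). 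That yields the uniform separate first-variation bounds needed for the compactness theorem. Your proposal invokes almost-minimality only for integrality after the limit; you need it earlier, and more sharply, to get the separate first-variation control that makes the limit exist at all.
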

	\begin{figure}[h]
		\centering
		\includegraphics[width=0.75\linewidth]{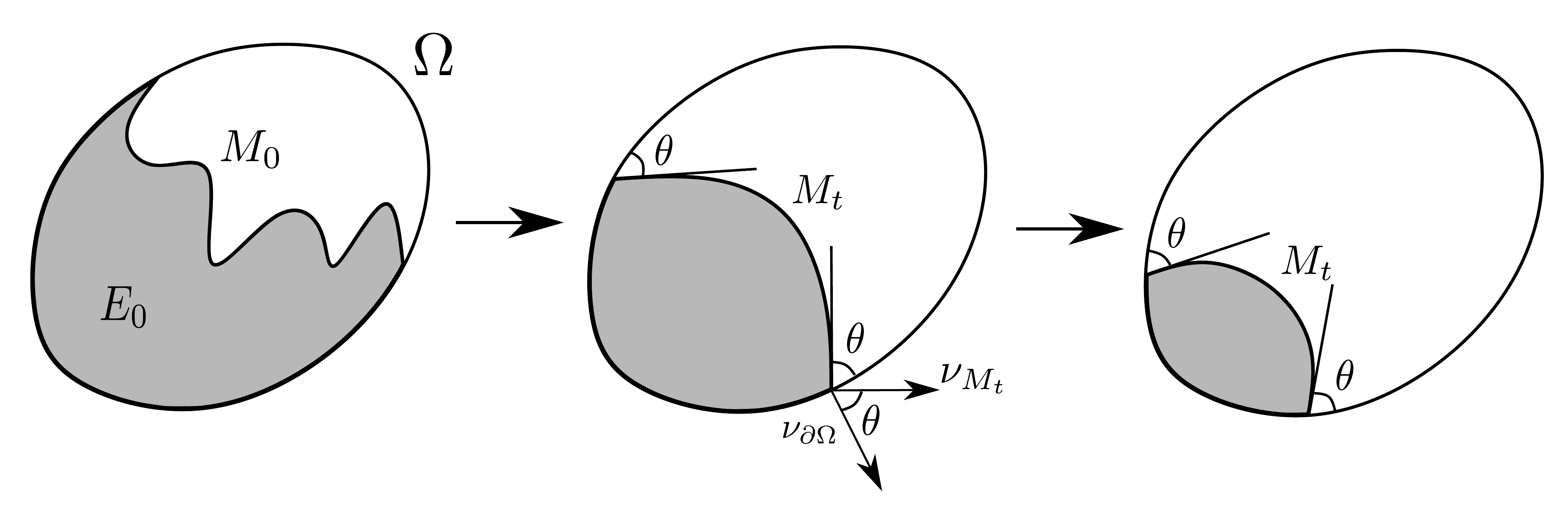}
		\caption{}
		\label{contact_angle_MCF}
	\end{figure}
	Figure \ref{contact_angle_MCF} provides a visualization of this flow. The exact statement will be given in Section \ref{sec_mainresult}. Brakke \cite{brakke1978motion} proposed and studied Brakke's MCF by characterizing the motion law of the surfaces using the above inequality. Ilmanen \cite{ilmanen1994elliptic} studied the existence and various properties of
	(unconstrained) Brakke flows using the elliptic regularization method. One advantage of this method is that one can apply White's local regularity theorem \cite{brian2005regularity} for this Brakke flow. It is not known, but White's study provides a basis for expecting that one can establish the regularity theorem for Brakke flows with the contact angle. We mention known studies on MCFs with contact angle: Hensel and Laux \cite{hensel2021existence} proved the existence and the weak-strong uniqueness of BV flow, which is based on the framework of BV functions originally proposed by Luckhaus and Sturzenhecker \cite{luckhaus1995implicit}, with the contact angle via the Allen--Cahn equation under a reasonable assumption. Marshall-Stevens et al. \cite{marshallstevens2024gradientflowphasetransitions} studied the gradient flow of the Allen--Cahn equation and convergence to a Brakke flow under general assumptions. The theory of viscosity solutions for capillary MCFs has been developed in \cite{barles1999nonlinear,ishii2004nonlinear}. More recently, the existence of weak solutions constructed via minimizing movement-type schemes, as well as their relation to viscosity solutions, has been actively investigated. Eto and Giga \cite{eto2024minimizing,eto2024convergence} studied a Chambolle-type minimizing movement scheme with capillarity and discussed the existence of minimizers and their convergence to viscosity solutions under a contact angle boundary condition. Bellettini and Kholmatov studied the capillary Almgren--Taylor--Wang scheme for MCFs of droplets with a contact angle condition and established various comparison results in \cite{bellettini2018minimizing}. As a continuation of this line of research, Kholmatov proved its consistency with smooth MCFs in \cite{kholmatov2024consistency} and investigated minimizing movements for forced anisotropic MCFs of droplets in \cite{kholmatov2024minimizing}.
	In contrast, we prove the existence of a weak solution of MCFs based on Brakke's work without any extra assumption in the co-dimension $ 1 $ case.
	\vskip.3\baselineskip
	We also briefly mention closely related works on MCFs using the elliptic regularization. Edelen \cite{Edelen+2020+95+137} proved the existence of Brakke flows with the Neumann boundary condition and the regularity theorem for it, and White \cite{WhiteMCF2021} studied the Dirichlet boundary condition for Brakke flows via the elliptic regularization. Schulze and White \cite{SchulzeWhite+2020+281+305} utilized the elliptic regularization to construct a MCF with a triple junction by working with the class of flat chains.
	\vskip.3\baselineskip
	The key elements of the present paper are the establishment of the extension of Ilmanen's regularization to the capillary setting and the derivation of first variation estimates for interior and wetted boundary surfaces. In the framework of contact angle varifolds, boundary contact at a prescribed angle $ \theta $ is represented by a pair of varifolds: one on the boundary and the other within the domain. This is the main difference from the standard elliptic regularization \cite{Edelen+2020+95+137,ilmanen1994elliptic}. A challenge arises from the fact that the estimates of the first variations of contact angle varifolds cannot be derived from the definition. This difficulty originates from the inability to control tangential directional variations along the boundary from the definition. To address this problem, we estimate them through the energy-minimizing structure inherent in the elliptic regularization. De Philippis and Maggi \cite{philippis2015regularity} proved the regularity property for the (almost) minimizer with capillarity. Using their results and De Masi's estimate \cite[Theorem 1.1]{de2021rectifiability}, we prove that the first variations are uniformly locally finite based solely on normal directional variations. This argument allows us to prove the compactness theorem for the approximated Brakke flows from the elliptic regularization with capillarity, thereby establishing the existence of the flow.
	\vskip.3\baselineskip
	The paper is organized as follows. In Section \ref{Basic_notation}, we set our notation and explain the definition of the Brakke-type MCF and the main result. In Section \ref{cpt_contactangle_varifolds}, we explain the compactness theorem for contact angle varifolds under a strong assumption. In Section \ref{cpt_contactangle_Brakke}, we prove the compactness theorem for contact angle Brakke flows by utilizing the results in Section \ref{cpt_contactangle_varifolds}. In Section \ref{elliptic_regularization}, we adopt the contact angle condition to the elliptic regularization and prove the $ \varepsilon $-independent estimates of the first variation. As a consequence, we finally prove the main result of this paper.
	
	\section*{Acknowledgement}
	The author would like to express his deepest gratitude to his supervisor Yoshihiro Tonegawa and his friend Kotaro Motegi for their insightful feedback to improve the quality of this paper. He would also like to thank Dr. Xuwen Zhang for his pointing out the first draft. This work was supported by JST SPRING, Japan Grant Number JPMJSP2106 and 2180, and the Grand-in-Aid for JSPS Fellows, Grant number 25KJ1245.
	
	\section{Preliminaries and main results}
	\label{Basic_notation}
	\subsection{Basic notation}
	The ambient space in which we will work is the Euclidean space $ \R^{ n + 1 } $. For each $ A \subset \R^{ n + 1 } $, $ \chi_A $ denotes the indicator function of $ A $, $ \overline{ A } $ and $ A^{ \circ } $ the closure and the interior of $ A $ in the Euclidean topology, respectively. When $ x \in \R^{ n + 1 } $ and $ r > 0 $, $ B_r ( x ) $ denotes the open ball with centre $ x $ and radius $ r $. For any integer $ k > 0 $, the symbols $ \mathcal{ L }^k $ and $ \mathcal{ H }^k $ denote the ($ k $-dimensional) Lebesgue measure and Hausdorff measure, respectively. For each integer $ k \geq 1 $, let $ \omega_k $ denote the volume of the unit ball in $ k $-dimensional space. The symbols $ \D $, $ \D' $, $ \Delta $, $ \D^2 $ denote the spatial gradient and the full gradient in $ \R^{ n + 1 } \times \R $, Laplacian and Hessian, respectively. As a class of test vector fields, we define
	\[
	\mathcal{ T }_{ \Gamma } C^k_c ( U ; \R^{ n + 1 } ) := \{ X \in C^k_c ( U ; \R^{ n + 1 } ) : X ( x ) \in T_x \Gamma \text{ for all } x \in \Gamma \}
	\]
	for an open set $ U \subset \R^{ n + 1 } $ and an $ n $-dimensional $ C^3 $ hypersurface $ \Gamma \subset U $ without boundary.
	\vskip.3\baselineskip
	A positive Radon measure $ \mu $ on $ \R^{ n + 1 } $ (or ``space-time'' $ \R^{ n + 1 } \times [ 0 , \infty ) $) is always regarded as a positive linear functional on the space $ C^0_c ( \R^{ n + 1 } ) $ of continuous and compactly supported functions, with the pairing denoted by $ \mu ( \phi ) $ for $ \phi \in C^0_c ( \R^{ n + 1 } ) $. The restriction of $ \mu $ to a Borel set $ A $ is denoted $ \mu \llcorner_A $, so that $ ( \mu \llcorner_A ) ( E ) := \mu ( A \cap E ) $ for any Borel set $ E \subset \R^{ n + 1 } $. The support of $ \mu $ is denoted $ \supp \mu $, and it is the closed set defined by
	\[
	\supp \mu := \bigl\{ x \in \R^{ n + 1 } : \mu ( B_r ( x ) ) > 0 \text{ for all } r > 0 \bigr\}.
	\]
	For $ 1 \leq p \leq \infty $, the space of $ p $-integrable functions with respect to $ \mu $ is denoted $ L^p ( \mu ) $. For a signed or vector-valued measure $ \mu $, $ \lv \mu \rv $ denotes its total variation. For two Radon measures $ \mu $ and $ \overline{ \mu } $ on $ \R^{ n + 1 } $, when the measure $ \overline{ \mu } $ is absolutely continuous with respect to $ \mu $, we write $ \overline{ \mu } \ll \mu $. When $ \mu $ and $ \overline{ \mu } $ are positive and $ \overline{ \mu } ( \phi ) \leq \mu ( \phi ) $ holds for all $ \phi \in C^0_c ( \R^{ n + 1 } ; [ 0 , \infty ) ) $, we will write $ \overline{ \mu } \leq \mu $.
	\vskip.3\baselineskip
	Let $ U \subset \R^{ n + 1 } $ be an open set. We say that a set $ E \subset U $ (or $ U \times [ 0 , \infty ) $) is a set of locally finite perimeter if, for all bounded open set $ V \subset \subset U $, the set $ E $ satisfies
	\[
	\sup \Biggl\{ \int_{ E \cap V } \dv X\,d x : X \in C^1_c ( V ; \R^{ n + 1 } ) , \lV X \rV_{ C^0 } \leq 1 \Biggr\} < \infty.
	\]
	When the above quantity is finite for $ V = U $, we simply say that $ E $ is a set of finite perimeter. If $ E \subset \R^{ n + 1 } $ is a set of locally finite perimeter, then there exists a vector-valued Radon measure satisfying
	\[
	\int_E \dv X\,d x = - \int_{ \R^{ n + 1 } } X \cdot d \D \chi_E \text{ for all } X \in C^1_c ( \R^{ n + 1 } ; \R^{ n + 1 } ).
	\]
	The derivative measure $ \D \chi_E $ is the associated Gauss--Green measure, and its total variation $ \lV \D \chi_E \rV $ is the perimeter measure; by De Giorgi's structure theorem, $ \lV \D \chi_E \rV = \mathcal{ H }^n \llcorner_{ \partial^* E } $, where $ \partial^* E $ is the reduced boundary of $ E $, and $ \D \chi_E = - \nu_E\,\lV \D \chi_E \rV = - \nu_E\,\mathcal{ H }^n \llcorner_{ \partial^* E } $, where $ \nu_E $ is the outer pointing unit normal vector field to $ \partial^* E $. It is often noted in this paper that $ \nu_E = \nu_F $ on $ \partial^* E \cap \partial^* F $ when $ E \subset F $ are sets of locally finite perimeter (see \cite[Section 16]{maggi2012sets} for example).
	\vskip.03\baselineskip
	A subset $ \Gamma \subset \R^{ n + 1 } $ is countably $ k $-rectifiable if it admits a covering $ \Gamma \subset Z \cup \bigcup_{ i \in \N } f^i ( \R^k ) $, where $ \mathcal{ H }^k ( Z ) = 0 $ and $ f^i : \R^k \to \R^{ n + 1 } $ is Lipschitz. If $ \Gamma $ is countably $ k $-rectifiable, $ \mathcal{ H }^k $-measurable and $ \mathcal{ H }^k ( \Gamma \cap K ) < \infty $ for any compact set $ K \subset \R^{ n + 1 } $, $ \Gamma $ has a measure-theoretic tangent plane called approximate tangent plane for $ \mathcal{ H }^k $-almost every $ x \in \Gamma $ (\cite[Theorem 11.6]{simon1983lectures}), denoted by $ T_x \Gamma $. We may simply refer to it as the tangent plane at $ x \in \Gamma $ without fear of confusion. A Radon measure $ \mu $ is said to be $ k $-rectifiable if there are a countably $ k $-rectifiable, $ \mathcal{ H }^k $-measurable set $ \Gamma $ and a positive function $ \Theta \in L^1_{ loc } ( \mathcal{ H }^k \llcorner_{ \Gamma } ) $ such that $ \mu = \Theta\,\mathcal{ H }^k \llcorner_{ \Gamma } $. This function $ \Theta $ is called multiplicity of $ \mu $. The approximate tangent plane of $ \Gamma $ in this case (which exists $ \mu $-almost everywhere) is denoted by $ T_x \mu $. When $ \Theta $ is an integer for $ \mu $-almost everywhere, $ \mu $ is said to be integral. We say $ \mu $ is a unit density $ k $-rectifiable Radon measure if $ \mu $ is integral and $ \Theta = 1 $ for almost everywhere on $ \Gamma $. 
	\vskip.3\baselineskip
	When $ 1 \leq k \leq n + 1 $, we call $ G ( n + 1 , k ) $ the Grassmannian of the un-oriented $ k $-dimensional linear subspaces of $ \R^{ n + 1 } $. For any open set $ U \subset \R^{ n + 1 } $, let $ G_k ( U ) := U \times G ( n + 1 , k ) $ be the trivial Grassmanian bundle over $ U $. A $ k $-varifold on $ U $ is a positive Radon measure on $ G_k ( U ) $. The set of $ k $-varifolds on $ U $ is denoted by $ \V_k ( U ) $. If the support of $ \lV V \rV $ is contained in a closed set $ M \subset \R^{ n + 1 } $, we may simply denote this by $ V \in \V_k ( M ) $. For a $ k $-varifold $ V $, the mass measure of $ V $ is denoted by $ \lV V \rV $, that is,
	\[
	\lV V \rV ( \phi ) := \int_{ G_k ( U ) } \phi ( x )\, d V ( x , S ) \text{ for all } \phi \in C^0_c ( U ).
	\]
	We say a $ k $-varifold $ V $ is rectifiable if there exists a corresponding $ k $-rectifiable Radon measure $ \mu = \Theta\,\mathcal{ H }^k \llcorner_{ \Gamma } $ such that $ V $ is represented as $ V = \Theta\,\mathcal{ H }^k \llcorner_{ \Gamma } \otimes \delta_{ T_x \Gamma } $, and $ V $ is integral if a corresponding $ k $-rectifiable Radon measure is integral. The set of rectifiable (or integral) $ k $-varifolds on $ U $ is denoted by $ \RV_k ( U ) $ (or $ \IV_k ( U ) $). When $ V $ is integral and a unit density, we say $ V $ is a unit density $ k $-varifold. For any subset $ \mathbf{ F } \subset C^1_c ( U ; \R^{ n + 1 } ) $, the first variation with respect to $ \mathbf{ F } $ of $ V \in \V_k ( U ) $ is defined by
	\[
	\delta V ( X ) := \int_U \dv_S X ( x )\,d V ( x , S ) \text{ for all } X \in \mathbf{ F },
	\]
	where $ \dv_S X ( x ) = \mathrm{ tr } ( S ( \D X ( x ) ) ) $. We say a $ k $-varifold $ V $ has bounded first variation with respect to $ \mathbf{ F } $ if it satisfies $ \sup \{ \lv \delta V ( X ) \rv : X \in \mathbf{ F } , \lV X \rV_{ C^0 } \leq 1 \} < \infty $. If $ V $ has bounded first variation with respect to $ C^1_c ( U ; \R^{ n + 1 } ) $, then by the Lebesgue decomposition theorem, there exist a positive Radon measure $ \lV \delta^s V \rV $ on $ U $, a $ \lV \delta^s V \rV $-measurable vector field $ \eta_V : U \to \R^{ n + 1 } $ and a $ \lV V \rV $-measurable vector field $ H_V : U \to \R^{ n + 1 } $ such that, for all $ X \in C^1_c ( U ; \R^{ n + 1 } ) $,
	\[
	\delta V ( X ) = - \int_U H_V \cdot X\,d \lV V \rV + \int_U X \cdot \eta_V\,d \lV \delta^s V \rV,
	\]
	where $ \lV \delta^s V \rV $ is the singular part of $ \lV \delta V \rV $ with respect to $ \lV V \rV $ which satisfies
	\[
	\lV \delta^s V \rV = \lV \delta V \rV \llcorner_Z , \quad Z := \Biggl\{ x \in U : \limsup_{ r \to + 0 } \frac{ \lV \delta V \rV ( B_r ( x ) ) }{ \lV V \rV ( B_r ( x ) ) } = \infty \Biggr\}
	\]
	as stated in \cite[Theorem 4.7]{simon1983lectures}. We call the above $ H_V $ the generalized mean curvature of $ V $. If $ V \in \IV_n ( U ) $, by Brakke's perpendicularity theorem \cite[Chapter 5]{brakke1978motion}, $ H_V $ and $ T_x \lV V \rV $ are orthogonal for $ \lV V \rV $-almost everywhere.
	\vskip.3\baselineskip
	Here, we list simple approximation facts to prove the compactness theorem proved in \cite[Proposition 3.4]{Edelen+2020+95+137}.
	
	\begin{lemma}
		\label{density_of_C2t}
		Let $ U \subset \R^{ n + 1 } $ be an open set and let $ M \subset U $ be a domain with $ C^3 $ boundary. We then have the following:
		\begin{enumerate}
			\item the space $ \{ \phi \in C^2_c ( U ; \R ) : \D \phi ( x ) \in T_x ( \partial M ) \text{ for all } x \in \partial U \} $ is dense in $ C^0 ( U ; \R ) $;
			\item if $ \mu $ is finite and $ n $-rectifiable Radon measure on $ U $, and $ 1 \leq p < \infty $, then the $ L^p ( \mu ) $-closure of $ \mathcal{ T }_{ \partial M } C^1_c ( U ; \R^{ n + 1 } ) $ is
			\[
			\bigl\{ X \in L^p ( U , \mu ; \R^{ n + 1 } ) : X ( x ) \in T_x ( \partial M ) \text{ for } \mu \text{-}a.e.\,x \in \partial M \bigr\}.
			\]
		\end{enumerate}
	\end{lemma}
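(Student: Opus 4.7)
The plan is to handle both parts by a common two-step strategy: first approximate without the boundary constraint using standard density results, then correct the approximation in a shrinking tubular neighborhood of $ \partial M $ using the $ C^2 $ regularity of the nearest-point projection (which follows from $ \partial M $ being $ C^3 $).

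For part (1), given $ \phi \in C^0 ( U ; \R ) $, I first approximate uniformly on compact sets by some $ \psi \in C^\infty_c ( U ) $. Let $ d $ be the signed distance to $ \partial M $ and $ \pi_{ \partial M } $ the nearest-point projection; both are $ C^2 $ on a tubular neighborhood $ N_\delta = \{ \lv d \rv < \delta \} $ for $ \delta $ small. Pick a cutoff $ \eta \in C^\infty $ with $ \eta = 1 $ on $ \{ \lv d \rv < \delta / 2 \} $ and $ \supp \eta \subset N_\delta $, and set
\[
    \tilde \psi = \eta \cdot ( \psi \circ \pi_{ \partial M } ) + ( 1 - \eta )\,\psi .
\]
Then $ \tilde \psi \in C^2_c ( U ; \R ) $. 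For $ x \in \partial M $ one has $ \eta \equiv 1 $ in a neighborhood and $ D \pi_{ \partial M } ( x ) $ is the orthogonal projection onto $ T_x ( \partial M ) $, so $ \D \tilde \psi ( x ) = D \pi_{ \partial M } ( x )^T \D \psi ( x ) \in T_x ( \partial M ) $, giving the required boundary condition. Uniform convergence $ \tilde \psi \to \psi $ as $ \delta \to 0 $ follows from continuity of $ \psi $ and $ \pi_{ \partial M } ( x ) \to x $.

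For part (2), given $ X \in L^p ( \mu ) $ satisfying the tangency condition, use standard density of $ C^1_c ( U ; \R^{ n + 1 } ) $ in $ L^p ( \mu ) $ to choose $ Y \in C^1_c ( U ; \R^{ n + 1 } ) $ with $ \lV Y - X \rV_{ L^p ( \mu ) } $ arbitrarily small. Extend $ \nu_{ \partial M } $ to a $ C^2 $ unit vector field $ \nu $ on $ N_\delta $ (e.g., $ \nu = \D d $) and, with the same cutoff $ \eta $ as above, define
\[
    \tilde Y = Y - \eta\,( Y \cdot \nu )\,\nu .
\]
Then $ \tilde Y \in \mathcal{ T }_{ \partial M } C^1_c ( U ; \R^{ n + 1 } ) $, and the error is supported in $ N_\delta $ with
\[
    \lv \tilde Y - Y \rv = \eta\,\lv Y \cdot \nu \rv \leq \lv ( Y - X ) \cdot \nu \rv + \lv X \cdot \nu \rv \chi_{ N_\delta }.
\]
The first term is bounded in $ L^p ( \mu ) $ by $ \lV Y - X \rV_{ L^p ( \mu ) } $. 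For the second, since $ \mu $ is finite and $ X \cdot \nu = 0 $ $ \mu $-a.e.\ on $ \partial M $, dominated convergence gives $ \lV X \cdot \nu\,\chi_{ N_\delta } \rV_{ L^p ( \mu ) } \to \lV X \cdot \nu\,\chi_{ \partial M } \rV_{ L^p ( \mu ) } = 0 $ as $ \delta \to 0 $. Choosing $ \delta $ small first and then $ Y $ close to $ X $ yields the claim.

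The main obstacle is in part (2): one needs to absorb the normal component of the smooth approximant $ Y $, but the tangency of $ X $ only holds on $ \partial M $, while the correction must be carried out on a full tubular neighborhood. The clean way to bridge this gap is the dominated convergence argument on the indicators $ \chi_{ N_\delta } $, which crucially uses finiteness of $ \mu $ and the $ \mu $-a.e.\ vanishing of $ X \cdot \nu $ on $ \partial M $.
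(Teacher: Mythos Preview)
The paper does not provide its own proof of this lemma; it merely records it as ``simple approximation facts'' and cites \cite[Proposition 3.4]{Edelen+2020+95+137}. So there is no in-paper argument to compare against, and the question is only whether your argument is sound.

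Your approach is correct and is essentially the standard one. A couple of minor points you should make explicit. In part (1), the modified function $\tilde\psi = \psi + \eta(\psi\circ\pi_{\partial M}-\psi)$ is only guaranteed to lie in $C^2_c(U)$ if the support of $\eta$ is itself compactly contained in $U$; since $\partial M$ need not be compact, you should multiply $\eta$ by a further cutoff that equals $1$ on $\supp\psi$ (this costs nothing, as $\psi\circ\pi_{\partial M}-\psi$ vanishes away from $\pi_{\partial M}^{-1}(\supp\psi)$). In part (2), you have only argued the nontrivial inclusion; the reverse inclusion---that any $L^p(\mu)$-limit of fields in $\mathcal{T}_{\partial M}C^1_c$ is tangent to $\partial M$ $\mu$-a.e.\ on $\partial M$---follows immediately from a.e.\ convergence along a subsequence and should be mentioned for completeness. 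Finally, note that your argument for (2) never uses the $n$-rectifiability of $\mu$, only that $\mu$ is finite; this is fine, it simply means the hypothesis is not sharp for this particular conclusion.
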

	
	\subsection{Varifold with contact angle}
	Let $ M \subset \R^{ n + 1 } $ be a compact domain with $ C^3 $ boundary, $ \theta \in ( 0 , \pi ) $, $ a = \cos \theta $ and $ \Omega \subset M $ be an open set such that $ \partial \Omega \cap M^{ \circ } $ and $ \partial \Omega \cap \partial M $ are of $ C^3 $ class with $ C^2 $ boundary $ \Gamma ( \Omega ) $. To formulate the contact angle in a variational sense, consider the first variation of the capillary free energy. By calculating the first variation of the capillary free energy
	\[
	F_a ( \Omega ) := \mathcal{ H }^n ( \partial \Omega \cap M^{ \circ } ) + a \mathcal{ H }^n ( \partial \Omega \cap \partial M )
	\]
	with respect to $ X \in \mathcal{ T }_{ \partial M } C^1_c ( \R^{ n + 1 } ; \R^{ n + 1 } ) $, one obtains
	\[
	\delta F_a ( \Omega ) [ X ] = - \int_{ \partial \Omega \cap M^{ \circ } } X \cdot H_{ \partial \Omega }\,d \mathcal{ H }^n + \int_{ \Gamma ( \Omega ) } ( \eta_{ \partial \Omega \cap M^{ \circ } } + a \eta_{ \partial \Omega \cap \partial M } ) \cdot X\,d \mathcal{ H }^{ n - 1 },
	\]
	where $ H_{ \partial \Omega } $ is the mean curvature vector of $ \partial \Omega \cap M^{ \circ } $, $ \eta_{ \partial \Omega \cap M^{ \circ } } $ is the exterior unit co-normal vector of $ \partial \Omega \cap M^{ \circ } $, and $ \eta_{ \partial \Omega \cap \partial M } $ is the exterior unit co-normal vector of $ \partial \Omega \cap \partial M $. If the second term on the right-hand side vanishes, one finds that $ \eta_{ \partial \Omega \cap M^{ \circ } } + a \eta_{ \partial \Omega \cap \partial M } $ is orthogonal to $ \partial M $ on $ \Gamma ( \Omega ) $. For any $ x \in \Gamma ( \Omega ) $, this fact implies that
	\[
	P_{ T_x ( \partial M ) } ( \eta_{ \partial \Omega \cap M^{ \circ } } ( x ) ) = - a \eta_{ \partial \Omega \cap \partial M } ( x ),
	\]
	which further implies that $ \nu_{ \partial M } ( x ) \cdot \eta_{ \partial \Omega \cap M^{ \circ } } ( x ) = \sin \theta $, where $ \nu_{ \partial M } ( x ) $ is the exterior unit normal vector of $ \partial M $ at $ x $. The following Figure \ref{balance} illustrates the above discussion.
	
	\begin{figure}[h]
		\centering
		\includegraphics[width=0.5\linewidth]{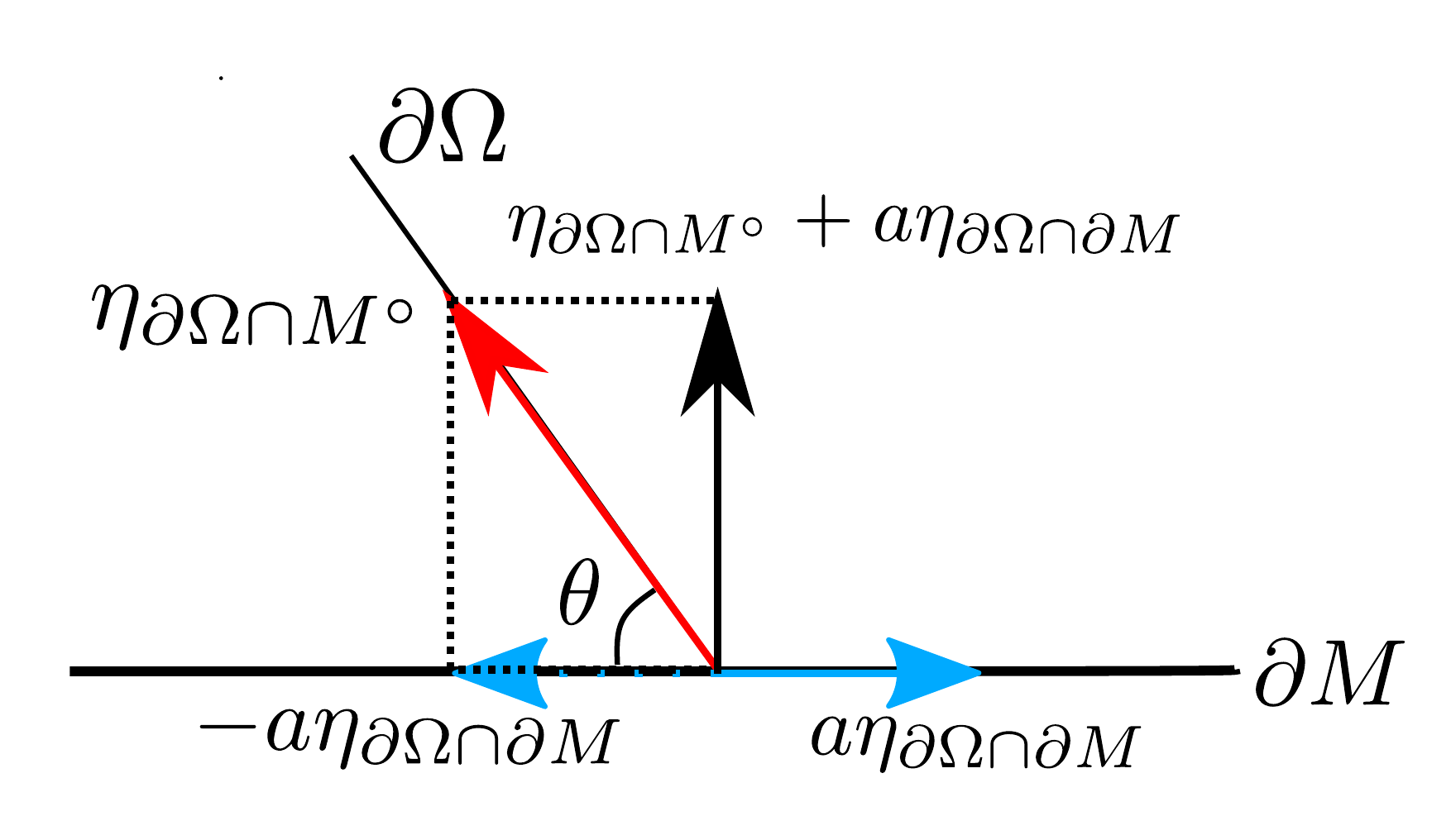}
		\caption{}
		\label{balance}
	\end{figure}
	
	Motivated by this discussion and setting $ a = \cos \theta $, one can define a contact angle condition for varifolds as follows (see, for example, \cite[Section 3.1]{kagaya2017contactangle} and \cite[Section 2.2]{de2022existence} for further details).
	
	\begin{definition}
		Let $ U \subset \R^n $ be an open set, and let $ M \subset U $ be a (relative) closed domain with $ C^3 $ boundary. Let $ ( V , W ) \in \V_n ( M ) \times \V_n ( \partial M ) $ and suppose $ \theta \in ( 0 , \pi ) $. We say that {\itshape $ ( V , W ) $ has the contact angle $ \theta $ in $ M $} if there exists a $ \lV V \rV $-measurable vector field $ H_V \in L^1 ( M , \lV V \rV ; \R^{ n + 1 } ) $ satisfying $ H_V ( x ) \in T_x ( \partial M ) $ for $ \lV V \rV $-almost every $ x \in \partial M $ such that
		\begin{equation}
			\label{contact_angle_condition}
			\int_{ G_n ( M ) } \dv_S X ( x )\,d V ( x , S ) + a \int_{ G_n ( \partial M ) } \dv_S X ( x )\,d W ( x , S ) = - \int_M X \cdot H_V\,d \lV V \rV
		\end{equation}
		for any $ X \in \mathcal{ T }_{ \partial M } C^1_c ( U ; \R^{ n + 1 } ) $, where $ a = \cos \theta $. In particular, we say that {\itshape $ V $ has free-boundary} if \eqref{contact_angle_condition} holds in the case $ a = 0 $. We call $ H_V $ the generalized mean curvature vector of $ V $ with the constant angle $ \theta $.
	\end{definition}
	
	We quote the following estimate for contact angle varifolds in \cite[Corollary 3.18]{de2022existence}.
	
	\begin{prop}
		\label{boundedness_first_variation}
		Let $ ( V , W ) \in \V_n ( M ) \times \V_n ( \partial M ) $ the contact angle $ \theta \in ( 0 , \pi / 2 ) $ in $ M $ and let $ a = \cos \theta $. Then $ \lV \delta V + a \delta W \rV $ is locally finite in $ U $, and for every open set $ K \subset \subset K' \subset \subset U $, we have
		\begin{equation}
			\label{boundedness_first_ineq}
			\lV \delta V + a \delta W \rV ( K ) \leq C ( n ) \int_{ K' } \lv H_V \rv\,d \lV V \rV + C' ( n , K , K' ) \lV V \rV ( K' )
		\end{equation}
		for some $ C ( n ) > 0 $ and $ C' ( n , K , K' ) > 0 $.
	\end{prop}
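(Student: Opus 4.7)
The plan is to handle general test vector fields via a tangential/normal decomposition with respect to $ \partial M $, applying the contact angle identity \eqref{contact_angle_condition} directly to the tangential piece and estimating the normal piece using the $ C^3 $ geometry of $ \partial M $. The restriction $ \theta \in (0,\pi/2) $ enters only at the end, through the coercivity factor $ a = \cos\theta > 0 $, which permits absorbing the wetted boundary mass $ \lV W \rV $ into $ \lV V \rV $.

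Given $ X \in C^1_c ( U ; \R^{n+1} ) $ with $ \lV X \rV_{C^0} \leq 1 $ and $ \supp X \subset K $, I would first extend $ \nu_{\partial M} $ smoothly to a vector field $ \tilde{\nu} $ on a tubular neighborhood of $ \partial M $ contained in $ K' $ (via the signed distance function, cut off away from $ \partial M $), set $ \phi := X \cdot \tilde{\nu} $, and decompose $ X = X^T + \phi \tilde{\nu} $, where $ X^T := X - \phi \tilde{\nu} $ lies in $ \mathcal{T}_{\partial M} C^1_c ( U ; \R^{n+1} ) $. Applying \eqref{contact_angle_condition} to $ X^T $ yields immediately
\[
\lv ( \delta V + a \delta W )( X^T ) \rv \leq \lV X \rV_{C^0} \int_{K'} \lv H_V \rv\,d \lV V \rV,
\]
producing the first term on the right-hand side of \eqref{boundedness_first_ineq}.

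For the normal piece $ \phi \tilde{\nu} $, a direct computation gives $ \dv_S ( \phi \tilde{\nu} ) = \phi \dv_S \tilde{\nu} + ( P_S \tilde{\nu} ) \cdot \D \phi $. Since $ W $ is supported on the $ C^3 $ hypersurface $ \partial M $ of the same dimension as $ W $, its tangent planes coincide $ \lV W \rV $-a.e.\ with $ T_x \partial M $, so $ P_S \tilde{\nu} = 0 $ for $ \lV W \rV $-a.e.\ $ x $; hence $ \lv \delta W ( \phi \tilde{\nu} ) \rv $ is bounded by $ C ( \partial M , K' ) \lV X \rV_{C^0} \lV W \rV ( K' ) $. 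For $ \delta V ( \phi \tilde{\nu} ) $, the term $ ( P_S \tilde{\nu} ) \cdot \D \phi $ is not immediately controlled by $ \lV X \rV_{C^0} $ because of the $ \D X $ dependence hidden in $ \D \phi $; I would remove this dependence by subtracting an auxiliary tangential test field built from $ \phi $ and reapplying \eqref{contact_angle_condition}, thereby reducing the troublesome contribution to a tangential divergence (covered by \eqref{contact_angle_condition}) plus a remainder bounded by $ C ( \partial M , K' ) \lV X \rV_{C^0} \lV V \rV ( K' ) $.

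The final step converts $ \lV W \rV ( K' ) $ into $ \lV V \rV $-controlled quantities. Using the $ C^3 $ regularity of $ \partial M $, one can construct a smooth tangential vector field on $ \partial M $ whose tangential divergence dominates the characteristic function of $ K' \cap \partial M $; extending it into $ U $ while staying tangential on $ \partial M $ and testing \eqref{contact_angle_condition} against it yields $ a \lV W \rV ( K' ) \leq C \int_{K''} \lv H_V \rv\,d \lV V \rV + C ( \partial M , K'' ) \lV V \rV ( K'' ) $ for a slightly larger $ K'' \supset K' $. Since $ a > 0 $ (as $ \theta < \pi/2 $), one can solve for $ \lV W \rV ( K' ) $ and combine with the previous estimates to obtain \eqref{boundedness_first_ineq}. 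The principal obstacle is the reorganization of the $ \D \phi $ term in $ \delta V ( \phi \tilde{\nu} ) $ together with the construction of the divergence-producing tangential test field on $ \partial M $; both steps rely delicately on the $ C^3 $ geometry of the boundary, and the latter is where the sign assumption $ a > 0 $ is indispensable.
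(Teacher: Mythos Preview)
The paper does not prove this proposition; it is quoted verbatim from De~Masi \cite[Corollary~3.18]{de2022existence}. So there is no in-paper argument to compare against, and your sketch must stand on its own.

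Your tangential/normal decomposition is the natural strategy, and the bookend steps (applying \eqref{contact_angle_condition} to $X^T$, and bounding $\lV W\rV(K')$ by testing \eqref{contact_angle_condition} against a tangential field with prescribed divergence on $\partial M$) are sound. There are, however, two genuine issues.

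First, your justification that $W$ has tangent planes $T_x(\partial M)$ $\lV W\rV$-a.e.\ is incorrect as stated: a general $n$-varifold whose mass sits on an $n$-surface need not have the ``right'' Grassmannian part (take $W=\delta_{x_0}\otimes\delta_S$ with $S\neq T_{x_0}\partial M$). The conclusion is true here, but for a different reason: the contact angle identity gives $V+aW$ bounded first variation with respect to $\mathcal T_{\partial M}C^1_c$, so Proposition~\ref{constancy} applied to $V+aW$ (and positivity, $a>0$) forces both $V$ and $W$ to concentrate on $\{S=T_x(\partial M)\}$ over $\partial M$. You should invoke this, not the dimensional coincidence.

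Second, and more seriously, the heart of the matter is the term $\int (P_S\tilde\nu)\cdot\nabla\phi\,dV$ with $\phi=X\cdot\tilde\nu$, which carries a genuine $\nabla X$ contribution via $(\nabla X)^T\tilde\nu$. Your remedy---``subtract an auxiliary tangential test field built from $\phi$''---is not specified, and it is not clear which tangential field would produce exactly $(P_S\tilde\nu)\cdot(\nabla X)^T\tilde\nu$ in its $S$-divergence so that \eqref{contact_angle_condition} can absorb it. Note that Proposition~\ref{constancy} kills this term \emph{on} $\partial M$ (since $P_S\tilde\nu=0$ there for $(V+aW)$-a.e.\ $(x,S)$), but in the tubular collar $M^\circ\cap\{\tilde\nu\neq0\}$ the integrand persists and cannot be bounded by $\lV X\rV_{C^0}$ pointwise. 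De~Masi's argument handles this via a more structured use of the signed distance (cf.\ the later proof of \eqref{firstV_esti} in this paper, which exploits $\nabla' d$ directly rather than decomposing an arbitrary $X$); your sketch needs either that construction or an explicit identification of the ``auxiliary tangential field'' before the step can be accepted.
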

	
	\begin{remark}
		\label{oscillation}
		In general, even if $ ( V , W ) \in \V_n ( M ) \times \V_n ( \partial M ) $ has the contact angle $ \theta $, $ \delta V $ and $ \delta W $ may not be locally finite. For example, let $ U \subset \{ 0 \} \times \R^n $ be a relative open set with infinite perimeter in $ \{ 0 \} \times \R^n $. Since any closed set is the $ 0 $-level set of a smooth function, there exists a smooth $ n $-dimensional surface $ \Sigma \subset ( 0 , \infty ) \times U $ such that $ \Sigma $ is locally a graph on $ \{ 0 \} \times \R^n $ near the boundary $ \partial U $ and meets $ \{ 0 \} \times \R^n $ tangentially on $ \partial U $. Let us consider the varifold $ V $ induced by $ \Sigma $ and $ W $ by $ \{ 0 \} \times \R^n \setminus U $, then $ ( V , 2 W ) $ has the contact angle $ \pi / 3 $, that is, $ a = 1 / 2 $. However, $ \delta V $ and $ \delta W $ are not locally finite. Therefore the handling of the first variations of $ V $ and $ W $ requires caution.
	\end{remark}
	
	Here, we introduce the results in \cite[Lemma 3.6, Corollary 3.9]{de2022existence}, for the purposes of this paper. By contrast with the above remark, one can understand the structure as varifolds on the boundary if it has a bounded variation with respect to $ \mathcal{ T }_{ \partial M } C^1_c $.
	
	\begin{prop}
		\label{constancy}
		Let $ V \in \V_n ( U ) $ have the bounded first variation $ \delta V $ with respect to $ \mathcal{ T }_{ \partial M } C^1_c ( U ; \R^{ n + 1 } ) $. Then we have
		\begin{equation}
			V ( \{ ( x , S ) \in G_n ( U ) : x \in \partial M , S \neq T_x ( \partial M ) \} ) = 0.
		\end{equation}
		Moreover, $ V \llcorner_{ G_n ( \partial M ) } $ is an $ n $-rectifiable varifold and $ \lV V \rV = \Theta^n ( \lV V \rV , \cdot )\,\mathcal{ H }^n \llcorner_{ \partial M } $.
	\end{prop}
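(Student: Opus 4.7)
The plan is to treat the two assertions in sequence; both are local in nature, so I work in local coordinates near an arbitrary point $ x_0 \in \partial M $ in which $ \partial M = \{ x_{n+1} = 0 \} $ and $ M = \{ x_{n+1} \geq 0 \} $, and write $ \pi_S $ for the orthogonal projection onto $ S \in G(n+1, n) $.

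For assertion (i), the idea is to choose a test field in $ \mathcal{T}_{\partial M} C^1_c $ that, on $ \partial M $, produces a sign-definite, non-oscillatory integrand detecting the ``vertical component'' of $ S $. Take
\[
X_\varepsilon(x) := x_{n+1}\,\phi(x')\,\chi_\varepsilon(x_{n+1})\,e_{n+1},
\]
where $ \phi \in C^1_c(\R^n) $ and $ \chi_\varepsilon \in C^1_c([0, \infty)) $ is a cutoff with $ \chi_\varepsilon(0) = 1 $ and $ \supp \chi_\varepsilon \subset [0, \varepsilon] $. Since $ X_\varepsilon|_{\partial M} = 0 $, we have $ X_\varepsilon \in \mathcal{T}_{\partial M} C^1_c $ and $ \lV X_\varepsilon \rV_{C^0} \leq C \varepsilon \lV \phi \rV_{C^0} $. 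Writing $ X_\varepsilon = \psi\,e_{n+1} $ with $ \psi := x_{n+1}\phi\chi_\varepsilon $, a direct computation gives $ \dv_S X_\varepsilon = \pi_S(e_{n+1}) \cdot \D\psi $, and on $ \{x_{n+1} = 0\} $ this simplifies (using $ e_{n+1} \cdot \pi_S e_{n+1} = \lv \pi_S e_{n+1} \rv^2 $ for the projection $ \pi_S $) to $ \phi(x')\,\lv \pi_S e_{n+1} \rv^2 $; off the boundary, the integrand is supported in $ \{0 < x_{n+1} < \varepsilon\} $ and uniformly bounded by $ C\lV \phi \rV_{C^1} $. Splitting $ \delta V(X_\varepsilon) $ into its boundary and interior contributions and combining the hypothesis $ \lv \delta V(X_\varepsilon) \rv \leq C_K \lV X_\varepsilon \rV_{C^0} = O(\varepsilon) $ with $ \lV V \rV(\{0 < x_{n+1} < \varepsilon\}) \to 0 $ as $ \varepsilon \to 0^+ $, I obtain in the limit
\[
\int_{\partial M} \phi(x')\,\lv \pi_S e_{n+1} \rv^2\, dV(x, S) = 0
\]
for every $ \phi \in C^1_c(\R^n) $. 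Nonnegativity of the integrand then forces $ \pi_S e_{n+1} = 0 $ for $ V $-a.e.\ $ (x, S) \in G_n(\partial M) $, which is equivalent to $ S = T_x(\partial M) $, proving (i).

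For assertion (ii), (i) gives $ V \llcorner_{G_n(\partial M)} = \mu \otimes \delta_{T_x(\partial M)} $ with $ \mu := \lV V \rV \llcorner_{\partial M} $, so it suffices to show $ \mu \ll \mathcal{H}^n \llcorner_{\partial M} $ with an identifiable density. For any smooth tangent field $ \xi $ on the $ C^3 $ hypersurface $ \partial M $, extend it by $ \tilde\xi(x', x_{n+1}) := \xi(x')\chi_\varepsilon(x_{n+1}) \in \mathcal{T}_{\partial M} C^1_c $. Using (i) to identify the boundary contribution to $ \delta V(\tilde\xi) $ as $ \int_{\partial M} \dv_{\partial M}\xi\, d\mu $ and bounding the interior contribution by the same thin-strip estimate, one concludes that $ \mu $ has locally bounded first variation as an intrinsic $ n $-varifold on $ \partial M $. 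An application of Allard's rectifiability theorem intrinsically on the smooth manifold $ \partial M $ then yields $ \mu = \Theta^n(\lV V \rV, \cdot)\,\mathcal{H}^n \llcorner_{\partial M} $.

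The main obstacle is the construction in (i): the test field must be admissible (tangential), small in $ C^0 $, and at the same time produce a sign-definite, non-oscillatory boundary integrand capturing the full failure of $ S $ to equal $ T_x(\partial M) $. Placing $ e_{n+1} $ itself (rather than a tangential direction) in the direction slot of $ X_\varepsilon $ is what forces the quadratic appearance of $ \pi_S e_{n+1} $ on $ \partial M $, and this positivity is precisely what lets me pass from a single integral identity to the $ V $-a.e.\ vanishing conclusion. The remainder of the argument, in particular the thin-strip control in (ii), is more routine once (i) is in hand.
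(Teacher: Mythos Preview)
The paper does not prove this proposition; it is quoted from De~Masi's thesis (the paper's reference to Lemma~3.6 and Corollary~3.9 of \cite{de2022existence}), so there is no in-paper argument to compare your approach against. I therefore assess your proof on its own merits.

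Your argument for assertion (i) is correct and is the natural mechanism. The test field $X_\varepsilon = x_{n+1}\phi(x')\chi_\varepsilon(x_{n+1})\,e_{n+1}$ is admissible because it vanishes on $\partial M$, has $C^0$ norm $O(\varepsilon)$, and on $\partial M$ produces exactly the sign-definite integrand $\phi(x')\,|\pi_S e_{n+1}|^2$. The crucial point you identify---that the extra factor $x_{n+1}$ tames the normal derivative, so that $|\nabla\psi|$ stays uniformly bounded despite $|\chi_\varepsilon'| \sim 1/\varepsilon$---is what makes the thin-strip contribution vanish. One cosmetic remark: since $V \in \V_n(U)$ need not be supported in $M$, take $\chi_\varepsilon$ supported in $(-\varepsilon,\varepsilon)$ rather than $[0,\varepsilon)$.

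There is, however, a genuine gap in your treatment of assertion (ii). You extend a tangent field $\xi$ on $\partial M$ to $\tilde\xi(x',x_{n+1}) = \xi(x')\chi_\varepsilon(x_{n+1})$ and claim that ``the same thin-strip estimate'' controls the interior contribution to $\delta V(\tilde\xi)$. But unlike in (i), there is now no factor of $x_{n+1}$ to absorb the blow-up of $\chi_\varepsilon'$: the term $\xi_j(x')\chi_\varepsilon'(x_{n+1})$ in $\nabla\tilde\xi$ is of size $\lV\xi\rV_{C^0}/\varepsilon$, so the interior integral is bounded only by $C\,\lV V\rV(\{0<|x_{n+1}|<\varepsilon\})/\varepsilon$, which need not tend to zero. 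If instead you freeze $\varepsilon$, the interior term involves $\lV\xi\rV_{C^1}$, so you obtain at best $|\delta_{\partial M}\mu(\xi)| \leq C\lV\xi\rV_{C^1}$; this is not the Radon-measure bound required for Allard's rectifiability theorem (nor for the $BV$-type argument that would give $\mu \ll \mathcal H^n$). In short, the mechanism that makes (i) work---the vanishing factor $x_{n+1}$---is absent in your construction for (ii), and the argument as written does not establish that $\mu$ has locally bounded intrinsic first variation on $\partial M$. A correct route to $\mu = \Theta^n(\lV V\rV,\cdot)\,\mathcal H^n\llcorner_{\partial M}$ has to proceed differently, for instance through a monotonicity-type inequality for $\lV V\rV$ at boundary points (using that radial fields centred on $\partial M$ are, after straightening, tangential) to obtain finite upper $n$-density, and then standard differentiation of measures.
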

	
	\subsection{Definition of weak MCF}
	We begin with the introduction to the definition of a (modified) weak MCF with contact angle in the sense of Brakke \cite{brakke1978motion}.
	\begin{definition}
		\label{def_of_Brakke}
		Let $ U \subset \R^n $ be an open set, let $ M \subset U $ be a (relative) closed domain with $ C^3 $ boundary $ \partial M $, $ \theta \in ( 0 , \pi / 2 ] $ and $ a = \cos \theta $. A family of varifolds $ \{ ( V_t , W_t ) \}_{ t \geq 0 } $ is {\itshape an $ n $-dimensional Brakke flow with a contact angle $ \theta $ in $ M \subset U $} if all of the following hold:
		\begin{enumerate}
			\item for almost every time $ t \geq 0 $, $ ( V_t , W_t ) $ is of $ \IV_n ( M ) \times \IV_n ( \partial M ) $ and it has the contact angle $ \theta $ with having the $ L^2 $ mean curvature $ H \in L_{ loc }^1 ( [ 0 , \infty ) ; L^2_{ loc } ( M , \lV V_t \rV ; \R^{ n + 1 } ) ) $ with $ H ( x , t ) \in T_x ( \partial M ) $ for almost every $ t \geq 0 $ and $ \mathcal{ H }^n $-almost every $ x \in \partial M $;
			\item for all $ T > 0 $ and all compact set $ K \subset M $, $ \sup_{ t \in [ 0 , T ] } ( \lV V_t \rV + \lV W_t \rV ) ( K ) < \infty $;
			\item for all $ 0 \leq t_1 < t_2 < \infty $ and all test function $ \phi \in C^1_c ( U \times [ 0 , \infty ) ; [ 0 , \infty ) ) $ with $ \D \phi ( \cdot , t ) \in \mathcal{ T }_{ \partial M } C^0_c ( U ; \R^{ n + 1 } ) $ all $ t > 0 $,
			\begin{equation}
				\label{Brakkeineq}
				\begin{split}
					&( \lV V_{ t_2 } \rV + a \lV W_{ t_2 } \rV ) ( \phi ( \cdot , t_2 ) ) - ( \lV V_{ t_1 } \rV + a \lV W_{ t_1 } \rV ) ( \phi ( \cdot , t_1 ) )\\
					&\leq \int^{ t_2 }_{ t_1 } \int_M ( \D \phi ( x , t ) - \phi ( x , t ) H ( x , t ) ) \cdot H ( x , t )\,d \lV V_t \rV d t + \pt \phi ( x , t )\,d ( \lV V_t \rV + a \lV W_t \rV ) d t.
				\end{split}
			\end{equation}
			We call inequality \eqref{Brakkeineq} Brakke's inequality.
		\end{enumerate}
	\end{definition}
	
	Including the boundary measure $ W_t $ in inequality \eqref{Brakkeineq} may seem unnatural at first. However, there are several reasons for incorporating the boundary measure $ W_t $ into the motion law \eqref{Brakkeineq} of the interface. The first reason is that the motion law of the internal interface $ V_t $ should describe the evolution of the boundary measure $ W_t $, which is driven by physical insights. The second is to prove a compactness property via the definition. In the proof, we argue that the limit measure of varifolds is uniquely determined, regardless of the choice of subsequences independent of time, by utilizing the fact that a Brakke flow has a semi-decreasing property with respect to time. Therefore, to establish this monotonicity including the boundary measure, $ W_t $ must be incorporated into inequality \eqref{Brakkeineq}. Another reason is to avoid the redundant situation where $ W_t $ suddenly becomes the entire boundary measure at some point in time. Specifically, we need to avoid the situation that $ W_t = \mathcal{ H }^n \llcorner_{ \partial M } $ at some time. If this were to occur, \eqref{contact_angle_condition} would coincide with the $ \ang{ 90 } $ angle condition, and it would no longer be appropriate to refer to it as the contact angle condition.
	
	\subsection{Main results}
	\label{sec_mainresult}
	We now present an existence theorem for a weak notion of MCF with a prescribed contact angle when given an initial datum $ E_0 $. Since the $ \ang{ 90 } $ angle condition has already been studied by Edelen \cite{Edelen+2020+95+137}, we only consider the case where $ \theta \neq \pi / 2 $ in this paper.
	
	\begin{theorem}
		\label{main_results}
		Let $ M \subset \R^{ n + 1 } $ be a compact domain with $ C^3 $ boundary, $ E_0 \subset M $ be a set of finite perimeter, $ \theta \in ( 0 , \pi / 2 ) $, and $ a = \cos \theta $. Then there exists a Brakke flow $ \{ ( V_t , W_t ) \}_{ t \geq 0 } $ with the contact angle $ \theta $ starting with the reduced boundary $ \partial^* E_0 $. Moreover, there exists a set of locally finite perimeter $ E \subset M \times [ 0 , \infty ) $ such that the following hold;
		\begin{enumerate}
			\item $ ( V_t , W_t ) $ satisfies
			\[
			\lV V_0 \rV + a \lV W_0 \rV = \lim_{ t \to 0^+ } ( \lV V_t \rV + a \lV W_t \rV ) = \mathcal{ H }^n \llcorner_{ M^{ \circ } \cap \partial^* E_0 } + a \mathcal{ H }^n \llcorner_{ \partial M \cap \partial^* E_0 },
			\]
			and $ W_t $ is a unit density $ n $-varifold for almost every $ t > 0 $;
			\item the characteristic function $ \chi_E $ satisfies the following properties:
			\begin{enumerate}
				\item $ \chi_E $ is of $ C^{ \frac{ 1 }{ 2 } }_{ loc } ( [ 0 , \infty ) ; L^1 ( M ) ) $ and $ \chi_E ( \cdot , 0 ) = \chi_{ E_0 } ( \cdot ) $ for almost everywhere;
				\item $ \lV \D \chi_{ E_t } \rV \llcorner_{ M^{ \circ } } + a \lV \D \chi_{ E_t } \rV \llcorner_{ \partial M } \leq \lV V_t \rV + a \lV W_t \rV $ for all $ t \geq 0 $;
				\item for all Borel set $ I \subset [ 0 , \infty ) $,
				\[
				\lV \D' \chi_E \rV ( M \times I ) \leq \frac{ 1 }{ a } \bigl( \mathcal{ L }^1 ( I ) + \mathcal{ L }^1 ( I )^{ \frac{ 1 }{ 2 } } \bigr) \mathcal{ H }^n ( \partial^* E_0 ).
				\]
			\end{enumerate}
		\end{enumerate}
	\end{theorem}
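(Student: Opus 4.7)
The strategy is to adapt Ilmanen's elliptic regularization \cite{ilmanen1994elliptic} to the capillary setting. For each $ \varepsilon > 0 $, I would introduce the weighted capillary functional
\[
I^\varepsilon [ E ] := \int_{ M \times \R } e^{ - z / \varepsilon }\,d \bigl( \lV \D \chi_E \rV \llcorner_{ M^{ \circ } \times \R } + a \lV \D \chi_E \rV \llcorner_{ \partial M \times \R } \bigr)
\]
and minimize it over sets $ E \subset M \times \R $ of locally finite perimeter satisfying the asymptotic condition $ E \cap ( M \times ( - \infty , 0 ] ) = E_0 \times ( - \infty , 0 ] $. Existence of a minimizer $ E^\varepsilon $ follows from the direct method, since $ a \in ( 0 , 1 ) $ preserves lower semi-continuity and coercivity on sets of finite perimeter. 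After the time rescaling $ t = \varepsilon z $, horizontal slices of $ \partial E^\varepsilon $ (weighted by the projection onto the spatial direction) define approximate contact-angle pairs $ ( V_t^\varepsilon , W_t^\varepsilon ) \in \V_n ( M ) \times \V_n ( \partial M ) $. The Euler--Lagrange equation of $ I^\varepsilon $, together with a direct Ilmanen-type computation, produces a version of \eqref{Brakkeineq} for these approximating pairs up to an $ \varepsilon $-error and yields the correct initial condition.

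The main obstacle, as highlighted in Remark \ref{oscillation}, is that Proposition \ref{boundedness_first_variation} only controls the combined first variation $ \lV \delta V_t^\varepsilon + a \delta W_t^\varepsilon \rV $ and not the individual pieces, while the Brakke-flow compactness machinery of Section \ref{cpt_contactangle_Brakke} demands separate $ \varepsilon $-uniform bounds on $ \lV \delta V_t^\varepsilon \rV $ and $ \lV \delta W_t^\varepsilon \rV $. I would resolve this by exploiting that $ E^\varepsilon $ is an almost-minimizer of a capillary-type functional: the De Philippis--Maggi regularity \cite{philippis2015regularity} provides uniform density and $ C^{ 1 , \alpha } $-type estimates for $ \partial E^\varepsilon $ at both interior and wetted-boundary points away from the contact set $ \Gamma $, which translate into $ \varepsilon $-uniform local mass bounds on $ V_t^\varepsilon $ and on $ W_t^\varepsilon $ separately. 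Plugging these bounds into \eqref{boundedness_first_ineq} then decouples the joint estimate into individual ones. This is the technical heart of the argument.

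With these bounds, I would extract a subsequence so that $ ( V_t^\varepsilon , W_t^\varepsilon ) \to ( V_t , W_t ) $ as varifolds for a.e.\ $ t $, and pass to the limit in \eqref{Brakkeineq} using lower semi-continuity of the $ \lv H \rv^2 $ term (standard Brakke approximation) and Fatou on the boundary mass terms; integrality of $ V_t $ and the unit-density property of $ W_t $ for a.e.\ $ t $ follow from Proposition \ref{constancy} combined with the capillary regularity and Brakke's density bound. The set $ E $ is obtained as the weak-$ \ast $ limit of $ \chi_{ E^\varepsilon } $ after the time rescaling: property (2)(b) is immediate once one identifies the reduced boundary of the slice $ E_t $ with a subset of $ \supp \lV V_t \rV \cup \supp \lV W_t \rV $, while (2)(a) and (2)(c) follow from a Cauchy--Schwarz bound on $ \pt \chi_{ E^\varepsilon } $ against $ \lv H_{ V_t^\varepsilon } \rv $ in $ L^2 $, which is uniformly controlled along the flow by the minimizing structure of $ E^\varepsilon $.
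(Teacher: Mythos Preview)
Your overall strategy matches the paper's, but there is a genuine gap at what you call ``the technical heart.'' You claim that De Philippis--Maggi regularity yields $\varepsilon$-uniform density and $C^{1,\alpha}$ estimates which, inserted into \eqref{boundedness_first_ineq}, decouple the combined first-variation bound into separate ones. Both steps fail. First, the De Philippis--Maggi constants depend on the Lipschitz norm of the weight $e^{-z/\varepsilon}$, which is of order $1/\varepsilon$, so the regularity degenerates as $\varepsilon\to 0$; the paper states this explicitly. Second, even granting uniform separate \emph{mass} bounds (these follow from the energy estimate, not from regularity), inequality \eqref{boundedness_first_ineq} is precisely the \emph{joint} estimate $\lV\delta V + a\,\delta W\rV$; controlling $\lV V\rV$ and $\lV W\rV$ individually does nothing to split it. The paper instead uses De Philippis--Maggi only \emph{qualitatively}, for each fixed $\varepsilon$: it guarantees that the singular part $\sigma_i$ of $\delta(\mathcal{H}^{n+1}\llcorner_{(\partial^*S^\varepsilon)_i})$ is a Radon measure concentrated on $\partial M\times(0,\infty)$ whose direction vector satisfies the exact contact-angle identity $\eta_{\sigma_i}\cdot\nu_{\partial M\times(0,\infty)}=1-a$ almost everywhere. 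This structural fact lets one test the first-variation formula against $\varphi\,\nabla' d$ (with $d$ the signed distance to $\partial M\times(0,\infty)$), following De Masi, and obtain an $\varepsilon$-independent bound on $\sigma_i$---hence on $\lV\delta V^\varepsilon\rV$ alone---in terms of interior mass and $L^1$ curvature. The bound on $\lV\delta W^\varepsilon\rV$ then follows by subtraction from the joint estimate \eqref{boundedness_first_ineq}.

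There is also a structural difference worth noting. You slice first, defining $n$-varifolds $(V^\varepsilon_t,W^\varepsilon_t)$ directly and passing to the limit in $M$. The paper works with the translated $(n+1)$-surfaces $\partial^*S^\varepsilon-(t/\varepsilon)\e_z$ in $M\times\R$, applies the Brakke-flow compactness (Theorem \ref{compactness_Brakke}) \emph{there}, shows the limit is $z$-invariant, and only then descends via a product lemma (Lemmas \ref{product_lemma}--\ref{contantangle_product}). This is cleaner because the approximating objects are \emph{exact} Brakke flows with contact angle in $M\times\R$ (Lemma \ref{e_Brakkeflow}), so the compactness hypotheses are verified at the level of the minimizer $S^\varepsilon$ rather than its slices, where control is weaker.
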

	
	\begin{definition}
		Let $ k $ be an integer. For a set $ A \subset \R^k \times \R $, we define the slice of $ A $ at $ z \in \R $ by $ A_z := \{ x \in \R^k : ( x , z ) \in A \} $.
	\end{definition}
	
	In the main theorem, it may appear that $ \chi_E $ is redundant. However, it has a few important roles besides $ \chi_E $ emerges naturally from the approach of this paper. First, $ \chi_E $ guarantees that there will be no superfluous non-uniqueness issue of Brakke-type MCFs such as sudden disappearance. Because, since the time-slice volume of $ \chi_E $ is continuous in $ L^1 $ with respect to time, the surface $ \partial^* E_t $ cannot vanish instantaneously for any time even under the weak solution setting. Second, the existence of $ \chi_E $ restricts the possible singularities of the interface $ \lV V_t \rV $. For example, in the $ n = 2 $ case, one can see that a unit density $ \lV V_t \rV $ cannot form a triple junction since $ \partial^* E_t $ cannot have a triple junction. The third role of $ \chi_E $ is that it ensures that vanishing the boundary measure $ W_t $ does not occur. Even if $ W_t $ suddenly disappears at some time, $ V_t $ may continue to evolve in time as if it satisfies the Brakke inequality \eqref{Brakkeineq} and $ ( V_t , W_t ) $ satisfies \eqref{contact_angle_condition} corresponding to the $ \ang{ 90 } $ angle condition. Such a set $ E $ appears in the almost same roles in \cite{takasao2016existence} as well. We note that the boundary measure of $ E $ also plays a role in the concept of enhanced motion proposed by Ilmanen \cite[Section 8]{ilmanen1994elliptic}.
	
	\section{Compactness for varifolds with contact angle}
	\label{cpt_contactangle_varifolds}
	In this section, we prove a compactness theorem for a sequence of contact angle varifolds $ V^i + a W^i $ under the assumption that $ \delta V^i $ and $ \delta W^i $ are uniformly locally bounded. Examples mentioned in Remark \ref{oscillation} necessitate this condition. Given this strong assumption, the proof of compactness proceeds by the standard compactness theorem for integral varifolds and measure-function pairs by Hutchinson's study \cite[Theorem 4.4.2]{hutchinson1986second}.
	
	\begin{theorem}
		\label{compactness_for_contact_angle}
		Let $ \theta \in ( 0 , \pi / 2 ) $ and $ a = \cos \theta $. Let $ U^i \subset \R^{ n + 1 } $ be open sets and $ M^i \subset U^i $ be (relative) closed sets with $ C^3 $ boundary, and let $ \{ ( V^i , W^i ) \}_{ i \in \N } \subset \IV_n ( M^i ) \times \IV_n ( \partial M^i ) $ have the contact angle $ \theta $ with the generalized mean curvature $ H_{ V^i } $ in $ M^i \subset U^i $. Suppose $ U^i \to U $, $ \partial M^i \to \partial M $ in $ C^3_{ loc } $, and for all compact set $ K \subset U $,
		\begin{equation}
			\label{contact_cpt_assumption}
			\sup_i \Biggl( \int_K | H_{ V^i } |^2\,d \lV V^i \rV + ( \lV V^i \rV + a \lV W^i \rV + \lV \delta V^i \rV + a \lV \delta W^i \rV ) ( K ) \Biggr) \leq C ( K ),
		\end{equation}
		for some $ C ( K ) > 0 $. Then there exists a pair of varifolds $ ( V , W ) \in \IV_n ( M ) \times \IV_n ( \partial M ) $ with the contact angle $ \theta $ such that, taking a subsequence if necessary, $ ( V^i , W^i ) \rightharpoonup ( V , W ) $ as varifolds and $ \delta V $ and $ \delta W $ are locally bounded. Moreover, if $ X^i \in \mathcal{ T }_{ \partial M^i } C^1_c ( U^i ; \R^{ n + 1  } ) $ converge to $ X \in \mathcal{ T }_{ \partial M } C^1_c ( U ; \R^{ n + 1 } ) $ in $ C^1 $ with uniformly bounded supports, then
		\[
		\lim_{ i \to \infty } \int_{ U^i } H_{ V^i } \cdot X^i\,d \lV V^i \rV = \int_U H_V \cdot X\,d \lV V \rV
		\]
		and hence, for all $ \phi \in C^0_c ( U ; [ 0 , \infty ) ) $,
		\[
		\int_U \phi \lv H_V \rv^2\,d \lV V \rV \leq \liminf_{ i \to \infty } \int_{ U^i } \phi \lv H_{ V^i } \rv^2\,d \lV V^i \rV.
		\]
	\end{theorem}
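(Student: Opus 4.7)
The plan is to combine Allard's integral compactness theorem for varifolds of locally bounded first variation with Hutchinson's compactness for measure-function pairs \cite[Theorem 4.4.2]{hutchinson1986second}, and then to pass to the limit in the contact-angle identity \eqref{contact_angle_condition}. First I would use the uniform bound \eqref{contact_cpt_assumption} on $ \lV V^i \rV + \lV \delta V^i \rV $ together with the assumption $ V^i \in \IV_n ( M^i ) $ to invoke Allard's integral compactness. After extracting a diagonal subsequence, this produces $ V^i \rightharpoonup V $ and $ W^i \rightharpoonup W $ as varifolds with $ V \in \IV_n ( M ) $ and $ W \in \IV_n ( \partial M ) $; the support constraints come from the $ C^3_{ loc } $ convergence $ \partial M^i \to \partial M $. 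Lower semicontinuity of the first variation under weak convergence then gives the local finiteness of $ \delta V $ and $ \delta W $.

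At the same time, the uniform $ L^2 $ bound on $ H_{ V^i } $ allows me to apply Hutchinson's measure-function pair theorem to $ ( \lV V^i \rV , H_{ V^i } ) $, yielding, along a further subsequence, a vector field $ H \in L^2_{ loc } ( \lV V \rV ; \R^{ n + 1 } ) $ with
\[
    \int Y^i \cdot H_{ V^i }\,d \lV V^i \rV \to \int Y \cdot H\,d \lV V \rV
\]
whenever $ Y^i \to Y $ uniformly with uniformly bounded supports, together with the lower-semicontinuity inequality for $ \int \phi \lv \cdot \rv^2\,d \lV \cdot \rV $.

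Next I would identify $ H $ with the generalized mean curvature of $ V $ relative to $ ( V , W ) $. Given $ X \in \mathcal{ T }_{ \partial M } C^1_c ( U ; \R^{ n + 1 } ) $, I produce a sequence $ X^i \in \mathcal{ T }_{ \partial M^i } C^1_c ( U^i ; \R^{ n + 1 } ) $ converging to $ X $ in $ C^1 $ with uniformly bounded supports: near $ \partial M $ one projects $ X $ tangentially onto $ \partial M^i $ via the nearest-point retraction (of $ C^2 $ class with uniform bounds by the $ C^3_{ loc } $ convergence of the boundaries), and one glues this to $ X $ away from a tubular neighborhood by a fixed cut-off. Then $ \dv_S X^i \to \dv_S X $ uniformly in $ ( x , S ) $ on compact sets, so varifold convergence yields
\[
    \int \dv_S X^i\,d V^i \to \int \dv_S X\,d V , \qquad \int \dv_S X^i\,d W^i \to \int \dv_S X\,d W,
\]
while the measure-function pair convergence gives $ \int X^i \cdot H_{ V^i }\,d \lV V^i \rV \to \int X \cdot H\,d \lV V \rV $. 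Passing to the limit in the $ i $-th instance of \eqref{contact_angle_condition} then produces \eqref{contact_angle_condition} for $ ( V , W ) $ with $ H_V := H $, and the tangency $ H_V ( x ) \in T_x ( \partial M ) $ on $ \partial M $ follows by testing against fields supported close to, and tangent to, $ \partial M $. The last two convergence and lower-semicontinuity assertions of the theorem are then exactly what Hutchinson's theorem provides.

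\textbf{Main obstacle.} The delicate step is the construction of the tangential approximants $ X^i $: the defining condition of $ \mathcal{ T }_{ \partial M^i } C^1_c $ is a pointwise constraint on a moving surface, so one needs $ C^1 $ vector fields that are \emph{exactly} tangent to each $ \partial M^i $ and still converge in $ C^1 $ to $ X $. Uniform $ C^3 $ control of $ \partial M^i $ on compact sets makes the nearest-point projection available with uniform $ C^2 $ bounds, but the bookkeeping of supports crossing $ \partial M $, and verifying the uniform $ C^1 $ convergence of the projected fields, is the main technical content. Once this approximation is in place, the two abstract compactness theorems do all of the analytical work.
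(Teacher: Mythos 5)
Your proposal follows essentially the same route the paper takes: the paper gives no detailed proof of this theorem, stating only that it "proceeds by the standard compactness theorem for integral varifolds and measure-function pairs by Hutchinson's study," which is exactly your combination of Allard's integral compactness with Hutchinson's measure-function pair theorem (the latter applied thanks to the uniform $L^2$ bound on $H_{V^i}$, precisely the point the paper's remark flags), followed by passage to the limit in \eqref{contact_angle_condition} via tangential approximants built from the nearest-point projection onto $\partial M^i$. Your write-up supplies more detail on the approximation step than the paper does, but the strategy is the same.
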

	
	\begin{remark}
		The inclusion of the $ L^2 $ norm of $ H_{ V^i } $ in \eqref{contact_cpt_assumption} may seem excessive. However, even if the first variation is uniformly bounded, it remains unclear whether the weak convergence of $ H_{ V^i }\,d \lV V^i \rV $ can be interpreted solely as an integral of $ V $, as in the free boundary case. This is because we impose a strong assumption to apply \cite[Theorem 4.4.2]{hutchinson1986second} in this context.
	\end{remark}
	
	\section{Compactness for Brakke flows with contact angle}
	\label{cpt_contactangle_Brakke}
	In this section, we prove the compactness theorem for contact angle Brakke flows. To this end, we show the basic properties of Brakke flow and a technical lemma to deal with the measure $ W_t $ on the boundary $ \partial M $. First, we prove the following semi-decreasing property, which is crucial for the Brakke flow.
	
	\begin{lemma}
		\label{Basic_props}
		Let $ \{ ( V_t , W_t ) \}_{ t \geq 0 } $ be a Brakke flow with the contact angle $ \theta $ in $ M \subset U $. Suppose, for all compact set $ K \subset M $, 
		\[
		\sup_{ t \geq 0 } ( \lV V_t \rV + a \lV W_t \rV ) ( K ) \leq C_0 ( K )
		\]
		for some $ C_0 ( K ) > 0 $. Then all of the following hold:
		\begin{enumerate}
			\item for each $ \phi \in C^2_c ( U ; [ 0 , \infty ) ) $ with $ \D \phi \in \mathcal{ T }_{ \partial M } C^1_c ( U ; \R^{ n + 1 } ) $, there exists a constant $ C_1 = C_1 ( C_0 , \phi ) > 0 $ such that
			\begin{equation*}
				\label{monotone_Brakke}
				t \mapsto ( \lV V_t \rV + a \lV W_t \rV ) ( \phi ) - C_1 t
			\end{equation*}
			is decreasing in $ t \geq 0 $;
			\item the left-limit and the right-limit of $ \lV V_t \rV + a \lV W_t \rV $ exist at every $ t \geq 0 $ and satisfy
			\begin{equation*}
				\label{conti_Brakke}
				\lim_{ s \to t^+ } ( \lV V_s \rV + a \lV W_s \rV ) \leq \lV V_t \rV + a \lV W_t \rV \leq \lim_{ s \to t^- } ( \lV V_s \rV + a \lV W_s \rV );
			\end{equation*}
			\item for each $ 0 \leq t_1 < t_2 < \infty $ and $ \phi \in C^0_c ( U ; [ 0 , \infty ) ) $, take a compact set $ K \subset U $ satisfying $ \supp \phi \subset K $, then there exists a constant $ C_2 = C_2 ( C_0 ( K ), \phi , t_1 , t_2 ) > 0 $ such that
			\begin{equation}
				\label{L2MC_bound_Brakke}
				\int^{ t_2 }_{ t_1 } \int_M \phi \lv H \rv^2\,d \lV V_t \rV d t \leq C_2.
			\end{equation}
		\end{enumerate}
	\end{lemma}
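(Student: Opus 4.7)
The plan is to derive all three statements directly from Brakke's inequality \eqref{Brakkeineq} by testing with time-independent functions $ \phi \in C^2_c ( U ; [ 0 , \infty ) ) $ whose gradient lies in $ \mathcal{ T }_{ \partial M } C^1_c $, combined with the Young-type splitting $ \D \phi \cdot H \leq \tfrac{ 1 }{ 2 } \phi \lv H \rv^2 + \tfrac{ \lv \D \phi \rv^2 }{ 2 \phi } $ and the Glaeser-type pointwise bound $ \lv \D \phi \rv^2 \leq 2 \lV \D^2 \phi \rV_{ C^0 } \phi $, which holds for any nonnegative $ \phi \in C^2_c $ and follows from a second-order Taylor expansion of $ \phi $ in the direction $ \D \phi / \lv \D \phi \rv $.

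Putting these together, the integrand on the right-hand side of \eqref{Brakkeineq} is bounded pointwise by $ - \tfrac{ 1 }{ 2 } \phi \lv H \rv^2 + \lV \D^2 \phi \rV_{ C^0 } $, so \eqref{Brakkeineq} becomes
\[
    ( \lV V_{ t_2 } \rV + a \lV W_{ t_2 } \rV ) ( \phi ) + \int^{ t_2 }_{ t_1 } \int_M \frac{ \phi \lv H \rv^2 }{ 2 }\,d \lV V_t \rV d t \leq ( \lV V_{ t_1 } \rV + a \lV W_{ t_1 } \rV ) ( \phi ) + \lV \D^2 \phi \rV_{ C^0 } \int^{ t_2 }_{ t_1 } \lV V_t \rV ( \supp \phi )\,d t.
\]
Discarding the nonnegative integral on the left and using the uniform mass bound $ \lV V_t \rV ( \supp \phi ) \leq C_0 ( \supp \phi ) $ gives (1) with $ C_1 := \lV \D^2 \phi \rV_{ C^0 } C_0 ( \supp \phi ) $. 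Keeping that integral and again applying the mass bound on the right proves (3) when $ \phi $ is of the admissible class; for a general $ \phi \in C^0_c ( U ; [ 0 , \infty ) ) $ with support in $ K $, it suffices to dominate $ \phi $ pointwise by $ \lV \phi \rV_{ C^0 } \eta $ for some admissible cutoff $ \eta \in C^2_c $ with $ \eta \equiv 1 $ on $ K $, whose existence follows from the tubular-neighborhood construction underlying Lemma \ref{density_of_C2t}(1).

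Statement (2) is a direct consequence of (1) by a monotonicity argument: for each admissible $ \phi \geq 0 $, the map $ s \mapsto ( \lV V_s \rV + a \lV W_s \rV ) ( \phi ) - C_1 s $ is nonincreasing, so both one-sided limits exist at every $ t \geq 0 $, and passing $ s \to t \pm 0 $ in $ ( \lV V_s \rV + a \lV W_s \rV ) ( \phi ) - ( \lV V_t \rV + a \lV W_t \rV ) ( \phi ) \leq C_1 \lv s - t \rv $ yields the two claimed inequalities. To lift this from pairings to weak limits of the measures themselves, one approximates general $ \phi \in C^0_c ( U ; [ 0 , \infty ) ) $ uniformly by admissible functions via Lemma \ref{density_of_C2t}(1); the uniform mass bound $ C_0 $ ensures that the family $ \{ \lV V_s \rV + a \lV W_s \rV \}_{ s \geq 0 } $ is equibounded on compacts, and the transfer of the Cauchy property in $ s $ from the approximating sequence identifies the one-sided pairing limits with weak limits of Radon measures.

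The main obstacle, though not deep, is the bookkeeping for the approximation from tangential $ C^2_c $ test functions to general $ C^0_c $ ones, since Brakke's inequality is only stated for tangential test functions and the constant $ C_1 $ depends on $ \lV \D^2 \phi \rV_{ C^0 } $ rather than $ \lV \phi \rV_{ C^0 } $. This forces the extension to rely on either a pointwise domination (for (3)) or a uniform approximation paired with equicontinuity in $ s $ (for (2)), rather than a bare substitution into \eqref{Brakkeineq}.
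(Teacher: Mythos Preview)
Your proposal is correct and matches the paper's proof essentially line for line: the paper also tests \eqref{Brakkeineq} with time-independent admissible $\phi$, applies the same Young-type splitting and the inequality $\sup_{\{\phi>0\}}\lv\D\phi\rv^2/\phi\leq 2\sup_U\lV\D^2\phi\rV$, and reads off (1) and (3) from the resulting chain, then deduces (2) from (1) by monotonicity followed by the $C^0$-approximation of Lemma~\ref{density_of_C2t}. Your treatment of the passage to general $\phi\in C^0_c$ in (2) and (3) is slightly more explicit than the paper's, but the underlying argument is identical.
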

	
	\begin{proof}
		For all $ 0 \leq t_1 < t_2 < \infty $ and all $ \phi \in C^2_c ( U ; [ 0 , \infty ) ) $ with $ \D \phi \in\mathcal{ T }_{ \partial M } C^1_c ( U ; \R^{ n + 1 } ) $, by \eqref{Brakkeineq}, we have
		\begin{equation}
			\label{monotone_Brakke_flows}
			\begin{split}
				&\int_M \phi\,d ( \lV V_{ t_2 } \rV + a \lV W_{ t_2 } \rV ) - \int_M \phi\,d ( \lV V_{ t_1 } \rV + a \lV W_{ t_1 } \rV )
				\leq \int^{ t_2 }_{ t_1 } \int_M - \phi \lv H \rv^2 + H \cdot \D \phi\,d \lV V_t \rV d t\\
				&\leq \int^{ t_2 }_{ t_1 } \int_M - \frac{ 1 }{ 2 } \phi \lv H \rv^2 + \frac{ \lv \D \phi \rv^2 }{ 2 \phi }\,d \lV V_t \rV d t
				\leq \int^{ t_2 }_{ t_1 } \int_M - \frac{ 1 }{ 2 } \phi \lv H \rv^2\,d \lV V_t \rV d t + C_0 ( \supp \phi ) C ( \phi ) ( t_2 - t_1 ),
			\end{split}
		\end{equation}
		where we used the Cauchy--Schwarz inequality, the fact that
		\[
		\sup_{ \{ \phi > 0 \} } \frac{ \lv \D \phi \rv^2 }{ \phi } \leq 2 \sup_U \lV \D^2 \phi \rV ( =: C ( \phi ) )
		\]
		for every $ \phi \in C^2_c ( U ; [ 0 , \infty ) ) $, and the assumption of this lemma. Thus we have
		\[
		( \lV V_{ t_2 } \rV + a \lV W_{ t_2 } \rV ) ( \phi ) - C_1 t_2 \leq ( \lV V_{ t_1 } \rV + a \lV W_{ t_1 } \rV ) ( \phi ) - C_1 t_1,
		\]
		where $ C_1 = C_0 ( \supp \phi ) C ( \phi ) $. This completes the proof of (1), and part (3) follows from \eqref{monotone_Brakke_flows} and taking a compact set $ K \supset \supp \phi $. By part (1), the left-/right-limits exist for each $ t > 0 $ and we have
		\begin{equation*}
			\begin{split}
				( \lV V_{ t + \varepsilon } \rV + a \lV W_{ t + \varepsilon } \rV ) ( \phi ) - C_1 \varepsilon
				&\leq ( \lV V_t \rV + a \lV W_t \rV ) ( \phi )\\
				&\leq ( \lV V_{ t - \varepsilon } \rV + a \lV W_{ t - \varepsilon } \rV ) ( \phi ) + C_1 \varepsilon,
			\end{split}
		\end{equation*}
		for each $ \phi \in C^2_c ( U ; [ 0 , \infty ) ) $ with $ \D \phi \in \mathcal{ T }_{ \partial M } C^1_c ( U ; \R^{ n + 1 } ) $ and sufficiently small $ \varepsilon > 0 $. Therefore, by limiting $ \varepsilon \to 0 $ and the approximation from Lemma \ref{density_of_C2t}, we obtain
		\[
			\lim_{ s \to t^+ } ( \lV V_s \rV + a \lV W_s \rV ) ( \phi ) \leq ( \lV V_t \rV + a \lV W_t \rV ) ( \phi ) \leq \lim_{ s \to t^- } ( \lV V_s \rV + a \lV W_s \rV ) ( \phi )
		\]
		for any $ \phi \in C^0_c ( U ; [ 0 , \infty ) ) $ and $ t \geq 0 $.
	\end{proof}
	
	Next, we prove the following technical lemma. It is necessary to determine the limit measure $ W_t $ independent of the choice of subsequence for time.
	
	\begin{lemma}
		\label{W_dens_lemma}
		Let $ U $ and $ U^i $ be open sets so that $ U^i \to U $ in $ C^3_{ loc } $. Let $ M \subset U $ and $ M^i \subset U^i $ have $ C^3 $ boundary such that $ \partial M^i \to \partial M $ in $ C^3_{ loc } $. Let $ W^i \in \IV_n ( \partial M^i ) $ and $ W \in \IV_n ( \partial M ) $ be a family of varifolds with the bounded first variation with respect to $ \mathcal{ T }_{ \partial M^i } C^1_c ( U^i ; \R^{ n + 1 } ) $ and $ \mathcal{ T }_{ \partial M } C^1_c ( U ; \R^{ n + 1 } ) $, respectively. Suppose that, for some positive integer $ q $,
		\[
		\lV W^i \rV ( \{ \Theta^n ( \lV W^i \rV , x ) \geq q + 1 \} ) = 0 \text{ for each } i, \quad \lim_{ i \to \infty } W^i = W \text{ as varifolds},
		\]
		where $ \Theta^n ( \lV W \rV , x ) = \lim_{ r \to + 0 } \lV W \rV ( B_r ( x ) ) / \omega_n r^n $. Then $ \lV W \rV ( \{ \Theta^n ( \lV W \rV , x ) \geq q + 1 \} ) = 0 $.
	\end{lemma}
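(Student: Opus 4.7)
The idea is to reduce the density bound for each $W^i$ to a pointwise Radon-measure comparison against $\mathcal{H}^n \llcorner \partial M^i$, and then pass this comparison to the limit using the $C^3_{loc}$ convergence of the boundaries.

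First, I would invoke Proposition \ref{constancy} for each $W^i$: since $W^i \in \IV_n(\partial M^i)$ has bounded first variation against $\mathcal{T}_{\partial M^i} C^1_c(U^i; \R^{n+1})$, its mass has the rectifiable form $\|W^i\| = \Theta^n(\|W^i\|, \cdot)\,\mathcal{H}^n \llcorner \partial M^i$, and since $W^i$ is integral the multiplicity is integer-valued. Combined with the hypothesis $\|W^i\|(\{\Theta^n(\|W^i\|,\cdot) \geq q+1\}) = 0$, this gives the pointwise bound $\Theta^n(\|W^i\|, \cdot) \leq q$ $\mathcal{H}^n$-a.e.\ on $\partial M^i$, hence the Radon-measure inequality
\begin{equation*}
\|W^i\| \leq q\,\mathcal{H}^n \llcorner \partial M^i.
\end{equation*}
Applying the same proposition to $W \in \IV_n(\partial M)$, we have $\|W\| = \Theta^n(\|W\|, \cdot)\,\mathcal{H}^n \llcorner \partial M$ with integer multiplicity, so it suffices to prove $\|W\| \leq q\,\mathcal{H}^n \llcorner \partial M$.

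Next, I would use the $C^3_{loc}$ convergence $\partial M^i \to \partial M$ to write each $\partial M^i$ locally as the graph of a small $C^3$-function over $\partial M$; the area formula with uniform convergence of the Jacobians to $1$ yields the weak convergence of Radon measures $\mathcal{H}^n \llcorner \partial M^i \rightharpoonup \mathcal{H}^n \llcorner \partial M$ on $U$. Since varifold convergence $W^i \rightharpoonup W$ implies weak convergence of mass measures $\|W^i\| \rightharpoonup \|W\|$, testing the above inequality against any nonnegative $\phi \in C^0_c(U)$ yields
\begin{equation*}
\int \phi\,d\|W\| = \lim_{i \to \infty} \int \phi\,d\|W^i\| \leq q \lim_{i \to \infty} \int \phi\,d\mathcal{H}^n \llcorner \partial M^i = q \int \phi\,d\mathcal{H}^n \llcorner \partial M.
\end{equation*}
Therefore $\|W\| \leq q\,\mathcal{H}^n \llcorner \partial M$, and combining with the integer-valued representation of $\|W\|$ gives $\Theta^n(\|W\|, x) \leq q$ for $\|W\|$-a.e.\ $x$, i.e., $\|W\|(\{\Theta^n(\|W\|, \cdot) \geq q+1\}) = 0$.

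\textbf{Expected main obstacle.} The only delicate point is the weak convergence of the reference measures $\mathcal{H}^n \llcorner \partial M^i \rightharpoonup \mathcal{H}^n \llcorner \partial M$. This is standard given the $C^3_{loc}$ convergence (indeed $C^1_{loc}$ would suffice, via local graph parametrizations and the area formula with Jacobians converging uniformly to $1$), but it is the key geometric input that lets the pointwise bound on multiplicities transfer to the limit without loss. Everything else is a one-line test against $\phi \in C^0_c$.
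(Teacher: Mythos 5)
Your proposal is correct and follows essentially the same route as the paper: both rest on Proposition \ref{constancy} to write $\lV W^i \rV = \Theta^n ( \lV W^i \rV , \cdot )\,\mathcal{ H }^n \llcorner_{ \partial M^i }$ with multiplicity at most $q$, and then pass the bound to the limit using the convergence $\mathcal{ H }^n \llcorner_{ \partial M^i } \rightharpoonup \mathcal{ H }^n \llcorner_{ \partial M }$. The only difference is presentational — you argue directly by testing $\lV W^i \rV \leq q\,\mathcal{ H }^n \llcorner_{ \partial M^i }$ against $\phi \in C^0_c$, whereas the paper argues by contradiction on a compact subset of $\{ \Theta^n \geq q + 1 \}$ via semicontinuity of Radon measures; your version is, if anything, slightly cleaner.
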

	
	\begin{proof}
		By Lemma \ref{constancy}, we can write down $ W^i $ and $ W $ as
		\[
		\lV W^i \rV = \Theta^n ( \lV W^i \rV , \cdot )\,\mathcal{ H }^n \llcorner_{ \partial M^i } \text{ and } \lV W \rV = \Theta^n ( \lV W \rV , \cdot )\,\mathcal{ H }^n \llcorner_{ \partial M }.
		\]
		Assume that $ \lV W \rV ( \{ \Theta^n ( \lV W \rV , x ) \geq q + 1 \} ) > 0 $ by a contradiction. Let $ K \subset \{ \Theta^n ( \lV W \rV , x ) \geq q + 1 \} $ be a compact set satisfying $ \lV W \rV ( K ) > 0 $, that is, $ \mathcal{ H }^n ( K ) > 0 $. Then, by the upper semi-continuity of Radon measures, we have
		\begin{equation*}
			\begin{split}
				( q + 1 ) \mathcal{ H }^n ( K ) &\leq \int_K \Theta^n ( \lV W \rV , x )\,d \mathcal{ H }^n \llcorner_{ \partial M } = \lV W \rV ( K ) \leq \limsup_{ i \to \infty } \lV W^i \rV ( K )\\
				&= \limsup_{ i \to \infty } \int_K \Theta^n ( \lV W^i \rV , x )\,d \mathcal{ H }^n \llcorner_{ \partial M^i } \leq q \limsup_{ i \to \infty } \int_K d \mathcal{ H }^n \llcorner_{ \partial M^i } = q \mathcal{ H }^n ( K ).
			\end{split}
		\end{equation*}
		This is a contradiction. Thus the claim is proven.
	\end{proof}
	
	The above two results allow us to prove the following compactness property for Brakke flows with contact angle under the almost same assumption \eqref{contact_cpt_assumption}.
	
	\begin{theorem}
		\label{compactness_Brakke}
		Let $ \theta \in ( 0 , \pi / 2 ) $, $ a = \cos \theta $, $ U^i \subset \R^{ n + 1 } $ be open sets, $ M^i \subset U^i $ be (relative) closed sets with $ C^3 $ boundary, and let $ \{ ( V_t^i , W_t^i ) \}_{ t \geq 0 } $ be a sequence of Brakke flows with the contact angle $ \theta $ in $ M^i \subset U^i $. Suppose that $ U^i \to U $, $ \partial M^i \to \partial M $ in $ C^3_{ loc } $, for any compact set $ K \subset M $ $ ( V_t^i , W_t^i ) $ satisfy
		\begin{equation}
			\label{mass_bound_assumption_for_Brakke_flow}
			\sup_i \sup_{ t \geq 0 } ( \lV V_t^i \rV + a \lV W_t^i \rV ) ( K ) \leq D_1 ( K ) < \infty
		\end{equation}
		for some $ D_1 ( K ) > 0 $, and $ W^i_t $ is a unit density varifold for each $ i \in \N $ and for almost every $ t \geq 0 $. Furthermore, for every open set $ K \subset \subset K^{ \prime } \subset \subset U $, there exists a constant $ D_2 ( K , K^{ \prime } ) > 0 $ such that, for almost every $ t > 0 $ and all $ i $,
		\begin{equation}
			\label{mass_bound_assumption_for_Brakke_flow2}
			( \lV \delta V_t^i \rV + a \lV \delta W_t^i \rV ) ( K ) \leq D_2 ( K , K^{ \prime } ) \int_{ K^{ \prime } } ( 1 + \lv H_{ V_t^i } \rv )\,d ( \lV V_t^i \rV + a \lV W_t^i \rV ).
		\end{equation}
		Then there exist a subsequence $ \{ i_j \}_{ j \in \N } $ and a Brakke flow $ \{ ( V_t , W_t ) \}_{ t \geq 0 } $ with the contact angle $ \theta $ such that for all $ t \geq 0 $ we have
		\[
		\lim_{ j \to \infty } \lV V_t^{ i_j } \rV = \lV V_t \rV , \quad \lim_{ j \to \infty } ( \lV V_t^{ i_j } \rV + a \lV W_t^{ i_j } \rV ) = \lV V_t \rV + a \lV W_t \rV
		\]
		as Radon measures on $ M^{ \circ } $ and $ M $, respectively. Moreover, for almost every time $ t \geq 0 $ there exists a subsequence $ \{ i'_j \}_{ j \in \N } \subset \{ i_j \}_{ j \in \N } $ such that
		\[
		\lim_{ j \to \infty } V_t^{ i'_j } = V_t , \quad \lim_{ j \to \infty } W_t^{ i'_j } = W_t
		\]
		as varifolds on $ M $.
	\end{theorem}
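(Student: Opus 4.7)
The plan is to adapt Ilmanen's extraction scheme for Brakke flows: I work first with the combined measure $\mu^i_t := \lV V^i_t \rV + a \lV W^i_t \rV$ and with $\lV V^i_t \rV$ restricted to $M^\circ$, use the monotonicity from Lemma \ref{Basic_props} to obtain convergence for every $t \geq 0$, and then invoke Theorem \ref{compactness_for_contact_angle} to recover full varifold convergence at almost every $t$. Concretely, I fix a countable family $\{\phi_k\}_{k \in \N}$ dense in $C^0_c(U; [0,\infty))$ and chosen within $\{\phi \in C^2_c(U; [0,\infty)) : \D \phi \in \mathcal{T}_{\partial M} C^1_c(U; \R^{n+1})\}$ (possible by Lemma \ref{density_of_C2t}). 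By Lemma \ref{Basic_props}(1) and \eqref{mass_bound_assumption_for_Brakke_flow}, the map $t \mapsto \mu^i_t(\phi_k) - C_1(\phi_k) t$ is monotone decreasing with a constant $C_1(\phi_k)$ independent of $i$, enabling a Helly-type diagonal extraction that produces a subsequence $\{i_j\}$ along which $\mu^{i_j}_t(\phi_k)$ converges for every rational $t$ and every $k$; monotonicity then propagates this to every $t \geq 0$ outside a countable exceptional set, which is itself handled by one further diagonalization using the one-sided limits from Lemma \ref{Basic_props}(2). Running the same extraction with test functions in $C^2_c(M^\circ)$ (automatically tangential and blind to $W^i_t$) determines $\lV V_t \rV \llcorner_{M^\circ}$ as the limit of $\lV V^{i_j}_t \rV$ on $M^\circ$, and altogether I obtain a Radon measure $\mu_t$ on $M$ with $\mu^{i_j}_t \rightharpoonup \mu_t$ at every $t \geq 0$.

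To realize $(V_t, W_t)$ as an integral pair at almost every $t$, I apply Lemma \ref{Basic_props}(3) to obtain the uniform bound $\int_0^T \int_K |H|^2\,d \lV V^i_t \rV\,dt \leq C$ on any compact $K \subset M$, whence Fatou's lemma yields $\liminf_j \int_K |H_{V^{i_j}_t}|^2\,d \lV V^{i_j}_t \rV < \infty$ for almost every $t$. At each such $t$ I extract a $t$-dependent sub-subsequence $\{i'_j\} \subset \{i_j\}$ along which this integral stays bounded; combined with the Cauchy--Schwarz inequality and hypotheses \eqref{mass_bound_assumption_for_Brakke_flow}, \eqref{mass_bound_assumption_for_Brakke_flow2}, this verifies assumption \eqref{contact_cpt_assumption} of Theorem \ref{compactness_for_contact_angle}, which then supplies $(V_t, W_t) \in \IV_n(M) \times \IV_n(\partial M)$ of contact angle $\theta$ with $V^{i'_j}_t \rightharpoonup V_t$ and $W^{i'_j}_t \rightharpoonup W_t$. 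Lemma \ref{W_dens_lemma} (with $q = 1$) transfers the unit density of $W^i_t$ to $W_t$, and the uniqueness of Radon measure limits forces $\lV V_t \rV + a \lV W_t \rV = \mu_t$ at these times, in particular pinning down the boundary part of $\lV V_t \rV$. On the residual null set of times I extend $(V_t, W_t)$ in any measurable way consistent with $\mu_t$ and the mass bound, which is harmless since Definition \ref{def_of_Brakke}(1) only demands integrality almost everywhere.

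Finally, Brakke's inequality \eqref{Brakkeineq} for the limit follows by passing $j \to \infty$ in the inequality for each $(V^{i_j}_t, W^{i_j}_t)$. The left-hand side converges pointwise at every $t_1, t_2$ because $\mu^{i_j}_t(\phi(\cdot, t))$ does, and the time integral of $\pt \phi$ against $\mu^{i_j}_t$ converges by dominated convergence. For the mean-curvature term, the Young bound $(\D \phi - \phi H) \cdot H \leq |\D \phi|^2/(2\phi)$ yields a uniform-in-$(j,t)$ upper bound on the integrand, permitting the reverse Fatou lemma in time; at almost every $t$, any sub-subsequence realizing $\limsup_j \int (\D \phi - \phi H^{i_j}) \cdot H^{i_j}\,d \lV V^{i_j}_t \rV$ must itself have bounded $L^2$ mean curvature (otherwise the integral would diverge to $-\infty$), so Theorem \ref{compactness_for_contact_angle} applies along it and its last two conclusions give the pointwise bound by $\int (\D \phi - \phi H) \cdot H\,d \lV V_t \rV$. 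The principal difficulty lies exactly here: because the varifold-converging sub-subsequence depends on $t$, one cannot identify the limit of the mean-curvature integral directly, and the combined reverse-Fatou plus $L^2$-coercivity argument just sketched is what closes the loop.
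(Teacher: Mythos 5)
Your proposal reconstructs the paper's proof essentially step for step: the Helly-plus-diagonal extraction for the combined measure $\lV V^i_t\rV+a\lV W^i_t\rV$ using the semi-decreasing property of Lemma \ref{Basic_props}, Fatou's lemma to get an a.e.-in-$t$ bound on $\liminf_j\int|H_{V^{i_j}_t}|^2\,d\lV V^{i_j}_t\rV$, the application of Theorem \ref{compactness_for_contact_angle} along $t$-dependent sub-subsequences together with Lemma \ref{W_dens_lemma} (with $q=1$) and the integrality/unit-density decomposition on $\partial M$ to make $(V_t,W_t)$ independent of the sub-subsequence, and finally the reverse-Fatou-in-time plus $L^2$-coercivity argument for Brakke's inequality, which is exactly how the paper resolves the $t$-dependence of the varifold-convergent subsequence. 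The only ingredient of the paper's argument you leave implicit is the measurability in $t$ of the limiting integrand $\int_M(\D\phi-\phi H_{V_t})\cdot H_{V_t}\,d\lV V_t\rV$ (the paper establishes it as upper semicontinuity on the full-measure set of continuity times of $t\mapsto\lV V_t\rV+a\lV W_t\rV$), which is needed to make sense of the time integral before the reverse-Fatou step can be closed.
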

	
	\begin{proof}
		Let $ \{ \phi_k \}_{ k \in \N } \subset C^2_c ( U ; [ 0 , \infty ) ) $ with $ \D \phi_k \in \mathcal{ T }_{ \partial M } C^1_c ( U ; \R^{ n + 1 } ) $ be a countable set of test functions such that it is dense in $ C^0_c ( U ; [ 0 , \infty ) ) $ with respect to the sup norm $ \sup \lv \cdot \rv $. We can take such a set by Lemma \ref{density_of_C2t}. As in Lemma \ref{monotone_Brakke_flows} (1), for each $ i \in \N $ and $ \phi_k $, $ f_{ i , k } ( t ) := ( \lV V^i_t \rV + a \lV W^i_t \rV ) ( \phi ) - C_1 t $ is a monotone decreasing function in $ t \geq 0 $. By \eqref{mass_bound_assumption_for_Brakke_flow}, $ \{ f_{ i , k } ( t ) \}_{ i \in \N } $ is locally bounded for each $ i , k \in \N $ and $ t \geq 0 $. Thus, by Helly's selection principle and a diagonal argument, there exist a subsequence $ \{ i_j \}_{ j \in \N } \subset \N $ and a decreasing function $ g_k ( t ) $ such that, for all $ t \geq 0 $, 
		\[
		\lim_{ j \to \infty } f_{ i_j , k } ( t ) = g_k ( t ).
		\]
		Therefore $ \lim_j ( \lV V^{ i_j }_t \rV + a \lV W^{ i_j }_t \rV ) ( \phi_k ) $ exists for every $ t \geq 0 $ and $ \phi_k $. Hence, by the density argument, $ ( \lV V^{ i_j }_t \rV + a \lV W^{ i_j }_t \rV ) ( \phi ) $ is also convergent for any fixed $ \phi \in C^0_c ( U ; [ 0 , \infty ) ) $, and the limit defines a locally bounded linear functional on $ C^0_c ( U ; [ 0 , \infty ) ) $ by \eqref{mass_bound_assumption_for_Brakke_flow}. By the Riesz representation theorem, there exists a Radon measure $ \mu_t $ on $ U $ such that
		\begin{equation}
			\label{conv_mu}
			\lim_{ j \to \infty } ( \lV V^{ i_j }_t \rV + a \lV W^{ i_j }_t \rV ) ( \phi ) = \mu_t ( \phi )
		\end{equation}
		for all $ t \geq 0 $ and $ \phi \in C^0_c ( U ; [ 0 , \infty ) ) $.
		\vskip.3\baselineskip
		We next prove that $ \mu_t $ determines a pair of varifolds $ ( V_t , W_t ) \in \IV_n ( M ) \times \IV_n ( \partial M ) $ with the contact angle $ \theta $ as the convergence. Thanks to \eqref{L2MC_bound_Brakke}, \eqref{mass_bound_assumption_for_Brakke_flow} and Fatou's Lemma, for almost every time $ t \geq 0 $ and all $ \phi \in C^0_c ( U ; [ 0 , \infty ) ) $, we have
		\begin{equation}
			\label{L2MC_bound_a.e.}
			\liminf_{ j \to \infty } \int_U \phi \lv H_{ V^{ i_j }_t } ( x ) \rv^2\,d \lV V^{ i_j }_t \rV < \infty.
		\end{equation}
		From \eqref{mass_bound_assumption_for_Brakke_flow}, \eqref{mass_bound_assumption_for_Brakke_flow2}, \eqref{L2MC_bound_a.e.} and the H\"{o}lder inequality, we can apply Theorem \ref{compactness_for_contact_angle} to the family $ \{ ( V^{ i_j }_t , W^{ i_j }_t ) \}_{ j \in \N } $ at almost every time $ t \geq 0 $ so that there exist a subsequence $ \{ i'_j \}_{ j \in \N } \subset \{ i_j \}_{ j \in \N } $ and $ ( V_t , W_t ) \in \IV_n ( M ) \times \IV_n ( \partial M ) $ such that $ \lim_j V^{ i'_j }_t = V_t $, $ \lim_j W^{ i'_j }_t = W_t $ as varifolds and $ ( V_t , W_t ) $ has the contact angle $ \theta $. Note that the choice of the subsequence may depend on $ t $. On the other hand, by \eqref{conv_mu}, we have
		\begin{equation*}
			\begin{split}
				\mu_t \llcorner_{ \partial M } &= ( \lV V_t \rV + a \lV W_t \rV ) \llcorner_{ \partial M },\\
				\mu_t \llcorner_{ M^{ \circ } } &= \lV V_t \rV \llcorner_{ M^{ \circ } },
			\end{split}
		\end{equation*}
		for almost every time $ t \geq 0 $. By Theorem \ref{compactness_for_contact_angle}, we see the boundedness of the first variations of $ V_t $ and $ W_t $. Applying Lemma \ref{constancy} to $ ( V_t , W_t ) $ and using the fact for integral varifolds, there exist a countably $ n $-rectifiable set $ \Gamma_t \subset M^{ \circ } $ and a function $ \Theta_t : \Gamma_t \to \N $ such that
		\begin{equation}
			\label{mut_explicit}
			\begin{split}
				\mu_t \llcorner_{ \partial M } &= ( \lV V_t \rV + a \lV W_t \rV ) \llcorner_{ \partial M } = \Theta^n ( \cdot , t )\,\mathcal{ H }^n \llcorner_{ \partial M },\\
				\mu_t \llcorner_{ M^{ \circ } } &= \lV V_t \rV \llcorner_{ M^{ \circ } } = \Theta_t\,\mathcal{ H }^n \llcorner_{ \Gamma_t },
			\end{split}
		\end{equation}
		where $ \Theta^n ( x , t ) = \Theta^n ( \lV V_t \rV + a \lV W_t \rV , x ) $. In particular, since Lemma \ref{W_dens_lemma} and $ W^i_t $ is a unit density varifold for each $ i $ and for almost every $ t \geq 0 $ due to the assumption, we may assume that $ W_t $ is a unit density $ n $-varifold. In $ M^{ \circ } $, $ V_t $ is uniquely determined by $ \mu_t $ from \eqref{mut_explicit}. On $ \partial M $, since $ a \in ( 0 , 1 ) $, $ W_t $ is a unit density $ n $-varifold, and $ V_t \in \IV_n ( M ) $, $ a \lV W^{ i^{ \prime }_j } \rV $ must converge to $ ( \Theta^n ( \cdot , t ) - \lfloor \Theta^n ( \cdot , t ) \rfloor )\,\mathcal{ H }^n \llcorner_{ \partial M } $, where $ \lfloor \cdot \rfloor $ is the nearest integer that is less than or equal to the given number. Hence we see that $ \lfloor \Theta^n ( \cdot , t ) \rfloor \,\mathcal{ H }^n \llcorner_{ \partial M } $ and $ ( \Theta^n ( \cdot , t ) - \lfloor \Theta^n ( \cdot , t ) \rfloor ) / a\,\mathcal{ H }^n \llcorner_{ \partial M } $ are integral $ n $-rectifiable. Thus, for almost every time $ t \geq 0 $, there exists the pair $ ( V_t , W_t ) \in \IV_n ( M ) \times \IV_n ( \partial M ) $ such that
		\begin{equation*}
			\begin{split}
				\lV V_t \rV &= \Theta_t\,\mathcal{ H }^n \llcorner_{ \Gamma_t } + \lfloor \Theta^n ( \cdot , t ) \rfloor\,\mathcal{ H }^n \llcorner_{ \partial M },\\
				a \lV W_t \rV &= ( \Theta^n ( \cdot , t ) - \lfloor \Theta^n ( \cdot , t ) \rfloor )\,\mathcal{ H }^n \llcorner_{ \partial M },
			\end{split}
		\end{equation*}
		and $ ( V_t , W_t ) $ has the constant angle $ \theta $. This implies that $ \mu_t $ determines $ V_t $ and $ W_t $ so that $ \mu_t = \lV V_t \rV + a \lV W_t \rV $. Note that, if we take a subsequence $ \{ i^{ \prime }_j \}_{ j \in \N } \subset \{ i_j \}_{ j \in \N } $ such that $ V^{ i^{ \prime }_j } $ and $ W^{ i^{ \prime }_j } $ converge as integral varifolds, we see that they must converge to the above $ V_t $ and $ W_t $ independently of the choice of such subsequences, respectively (Although it is not guaranteed that $ \lV V_t^{ i_j } \rV $ and $ \lV W_t^{ i_j } \rV $ converge to $ \lV V_t \rV $ and $ \lV W_t \rV $ as Radon measures, respectively, we note that it suffices for the purpose of establishing the existence theorem in this paper). For $ t \geq 0 $ where \eqref{L2MC_bound_a.e.} does not hold, define $ ( V_t , W_t ) \in \V_n ( M ) \times \V_n ( \partial M ) $ so that $ \mu_t = \lV V_t \rV + a \lV W_t \rV $. Note that they may not be rectifiable and may not have the contact angle $ \theta $, but we do not need to care since the measure of such a set of times is zero. Thus we obtain the convergent varifolds $ ( V_t , W_t ) $ for all $ t \geq 0 $ and it has the contact angle $ \theta $ for almost every $ t \geq 0 $.
		\vskip.3\baselineskip
		Next, we prove that the above varifolds $ ( V_t , W_t ) $ satisfy Brakke's inequality \eqref{Brakkeineq}. The following proof follows along \cite[Theorem 3.7]{tonegawa2019brakke} and \cite[Theorem 4.14]{Edelen+2020+95+137} in much the same way, but we will explain for the convenience of the reader. Let $ \phi \in C^2_c ( U \times [ 0 , \infty ) ; [ 0 , \infty ) ) $ with $ \D \phi ( \cdot , t ) \in \mathcal{ T }_{ \partial M } C^1_c ( U ; \R^{ n + 1 } ) $ for all $ t \geq 0 $, and let $ 0 \leq t_1 < t_2 < \infty $ fix arbitrarily. For $ \phi $, we take the approximate sequence $ \phi^{ i_j } \in C^2_c ( U^{ i_j } \times [ 0 , \infty ) ; [ 0 , \infty ) ) $ with $ \D \phi^{ i_j } ( \cdot , t ) \in \mathcal{ T }_{ \partial M^{ i_j } } C^1_c ( U^{ i_j } ; \R^{ n + 1 } ) $ so that $ \phi^{ i_j } \to \phi $ in $ C^2_{ loc } $ with uniformly bounded supports $ K' \subset \R^{ n + 1 } $ and $ \sup_j \sup_{ U^{ i_j } } \lV \D^2 \phi^{ i_j } \rV < \infty $. By \eqref{monotone_Brakke_flows} and \eqref{mass_bound_assumption_for_Brakke_flow}, we have
		\begin{equation*}
			\int_{ M^{ i_j } } - \phi^{ i_j } \lv H_{ V^{ i_j }_t } \rv^2 + H_{ V^{ i_j }_t } \cdot \D \phi^{ i_j }\,d \lV V^{ i_j }_t \rV \leq \int_{ M^{ i_j } } \frac{ \lv \D \phi^{ i_j } \rv^2 }{ 2 \phi^{ i_j } }\,d \lV V^{ i_j }_t \rV \leq C ( \phi ) D ( K' ),
		\end{equation*}
		where $ C ( \phi ):= \sup_j \sup_{ U^{ i_j } } \lV \D^2 \phi \rV $. Set $ f_j ( t ) := \int_{ M^{ i_j } } - \phi^{ i_j } \lv H_{ V^{ i_j }_t } \rv^2 + H_{ V^{ i_j }_t } \cdot \D \phi^{ i_j }\,d \lV V^{ i_j }_t \rV $, by Fatou's lemma, we have
		\begin{equation}
			\label{C+fj_liminf}
			\int_{ t_1 }^{ t_2 } \liminf_{ j \to \infty } ( C ( \phi ) D ( K' ) + f_j ( t ) )\,d t \leq \liminf_{ j \to 
				\infty } \int_{ t_1 }^{ t_2 } ( C ( \phi ) D ( K' ) + f_j ( t ) )\,d t.
		\end{equation}
		From \eqref{C+fj_liminf}, we obtain
		\begin{equation}
			\label{fj_liminf}
			\int_{ t_1 }^{ t_2 } \liminf_{ j \to \infty } f_j ( t )\,d t \leq \liminf_{ j \to \infty } \int_{ t_1 }^{ t_2 } f_j ( t )\,d t.
		\end{equation}
		Since $ ( V^{ i_j }_t , W^{ i_j }_t ) $ is a Brakke flow, the right-hand side of \eqref{fj_liminf} equals
		\begin{equation}
			\label{MC_Brake_liminf}
			\begin{split}
				&\liminf_{ j \to \infty } \Biggl( - ( \lV V^{ i_j }_t \rV + a \lV W^{ i_j }_t \rV ) ( \phi^{ i_j } ( \cdot , t ) ) \biggr|^{ t_2 }_{ t = t_1 } + \int_{ t_1 }^{ t_2 } \int_{ M^{ i_j } } \pt \phi^{ i_j }\,d ( \lV V^{ i_j }_t \rV + a \lV W^{ i_j }_t \rV ) d t \Biggr)\\
				&= - ( \lV V_t \rV + a \lV W_t \rV ) ( \phi ( \cdot , t ) ) \biggr|^{ t_2 }_{ t = t_1 } + \int_{ t_1 }^{ t_2 } \int_M \pt \phi\,d ( \lV V_t \rV + a \lV W_t \rV ) d t,
			\end{split}
		\end{equation}
		where we used the convergence of $ \lV V^{ i_j }_t \rV + a \lV W^{ i_j }_t \rV $ to $ \lV V_t \rV + a \lV W_t \rV $ for all $ t \geq 0 $. On the other hand, let the subsequence $ \{ i'_j \}_{ j \in \N } \subset \{ i_j \}_{ j \in \N } $ be such that $ ( V^{ i'_j }_t , W^{ i'_j }_t ) \rightharpoonup ( V_t , W_t ) $ as varifolds and
		\[
		\lim_{ j \to \infty } \int_{ M^{ i^{ \prime }_j } } \phi^{ i^{ \prime }_j } \lv H_{ V^{ i^{ \prime }_j }_t } \rv^2 - H_{ V^{ i^{ \prime }_j }_t } \cdot \D \phi^{ i^{ \prime }_j }\,d \lV V^{ i^{ \prime }_j }_t \rV = \liminf_{ j \to \infty } \int_{ M^{ i_j } } \phi^{ i_j } \lv H_{ V^{ i_j }_t } \rv^2 - H_{ V^{ i_j }_t } \cdot \D \phi^{ i_j }\,d \lV V^{ i_j }_t \rV.
		\]
		for almost every $ t \geq 0 $. By a layer-cake formula and Theorem \ref{compactness_for_contact_angle}, for almost every time $ t \geq 0 $, we have
		\begin{equation*}
			\begin{split}
				\int_M \phi \lv H_{ V_t } \rv^2\,d \lV V_t \rV
				&= \int^{ \infty }_0 \int_{ \{ \phi > s \} } \lv H_{ V_t } \rv^2\,d \lV V_t \rV d s
				\leq \int^{ \infty }_0 \liminf_{ j \to \infty } \int_{ \{ \phi > s \} } \lv H_{ V_t^{ i^{ \prime }_j } } \rv^2\,d \lV V_t^{ i_j } \rV d s\\
				&\leq \liminf_{ j \to \infty } \int_{ M^{ i^{ \prime }_j } } \phi \lv H_{ V_t^{ i^{ \prime }_j } } \rv^2\,d \lV V^{ i^{ \prime }_j }_t \rV = \liminf_{ j \to \infty } \int_{ M^{ i^{ \prime }_j } } \phi^{ i^{ \prime }_j } \lv H_{ V_t^{ i^{ \prime }_j } } \rv^2\,d \lV V^{ i^{ \prime }_j }_t \rV
			\end{split}
		\end{equation*}
		and
		\[
		\int_M H_{ V_t } \cdot \D \phi\,d \lV V_t \rV = \lim_{ j \to \infty } \int_{ M^{ i^{ \prime }_j } } H_{ V^{ i^{ \prime }_j }_t } \cdot \D \phi^{ i^{ \prime }_j }\,d \lV V^{ i^{ \prime }_j }_t \rV.
		\]
		The above deduce that
		\begin{equation}
			\label{-fj_liminf}
			\begin{split}
				\int_M \phi \lv H_{ V_t } \rv^2 - H_{ V_t } \cdot \D \phi\,d \lV V_t \rV &\leq \liminf_{ j \to \infty } \int_{ M^{ i^{ \prime }_j } } \phi^{ i^{ \prime }_j } \lv H_{ V^{ i^{ \prime }_j }_t } \rv^2 - H_{ V^{ i^{ \prime }_j }_t } \cdot \D \phi^{ i^{ \prime }_j }\,d \lV V^{ i^{ \prime }_j }_t \rV\\
				&= \liminf_{ j \to \infty } \int_{ M^{ i_j } } \phi^{ i_j } \lv H_{ V^{ i_j }_t } \rv^2 - H_{ V^{ i_j }_t } \cdot \D \phi^{ i_j }\,d \lV V^{ i_j }_t \rV
			\end{split}
		\end{equation}
		for almost every $ t > 0 $. Now, it follows from Lemma \ref{Basic_props} (1) and (2) that there is the set $ B \subset ( 0 , \infty ) $ such that $ t \mapsto \lV V_t \rV + a \lV W_t \rV $ is continuous at $ t \in B $ and $ B $ has full measure. By the definition of $ B $, given any $ t_i \to t $ with $ t_i , t \in B $, we have $ \lV V_{ t_i } \rV + a \lV W_{ t_i } \rV \to \lV V_t \rV + a \lV W_t \rV $. Using the convergence $ \lV V_{ t_i } \rV + a \lV W_{ t_i } \rV \to \lV V_t \rV + a \lV W_t \rV $, we deduce
		\[
		\int_M - \phi \lv H_{ V_t } \rv^2 + H_{ V_t } \cdot \D \phi\,d \lV V_t \rV \geq \limsup_{ i \to \infty } \int_M - \phi \lv H_{ V_{ t_i } } \rv^2 + H_{ V_{ t_i } } \cdot \D \phi\,d \lV V_{ t_i } \rV.
		\]
		Therefore
		\[
		t \mapsto \int_M - \phi \lv H_{ V_t } \rv^2 + H_{ V_t } \cdot \D \phi\,d \lV V_t \rV
		\]
		is upper semi-continuous on $ B \cap [ t_1 , t_2 ] $, and particularly measurable on $ [ t_1 , t_2 ] $. Thanks to \eqref{fj_liminf}, \eqref{MC_Brake_liminf}, and \eqref{-fj_liminf}, we obtain the Brakke inequality \eqref{Brakkeineq} for any $ \phi \in C^1_c ( U \times [ 0 , \infty ) ; [ 0 , \infty ) ) $ with $ \D \phi ( \cdot , t ) \in \mathcal{ T }_{ \partial M } C^0_c ( U ; \R^{ n + 1 } ) $ by approximation.
	\end{proof}
	
	\section{Elliptic regularization with contact angle}
	\label{elliptic_regularization}
	In this subsection, we adapt the elliptic regularization to the contact angle setting. The contact angle condition differs from the free-boundary setting at \cite{Edelen+2020+95+137} in which we have to consider a pair of varifolds. In \cite{ilmanen1994elliptic}, Ilmanen used the framework of rectifiable current and for general co-dimensional case. However, since we only consider hypersurfaces represented on the boundary of a domain for well definedness of contact angle varifolds, we will work with the set of finite perimeter and its boundary as in \cite{tashiro2023existence}.
	\vskip.3\baselineskip
	In the following, we will use the following notation: Throughout this section, let $ M \subset \R^{ n + 1 } $ be a compact domain with $ C^3 $ boundary. The symbol $ \e_z $ will denote the standard basis pointing the $ ( n + 2 ) $-component, that is, $ \e_z = ( 0 , \ldots , 0 , 1 ) \in \R^{ n + 1 } \times \R $. For any $ k $-dimension affine space $ T \subset \R^{ n + 1 } \times \R $, we define $ P_T $ by the projection of $ \R^{ n + 1 } \times \R $ onto $ T $. The symbols $ \mathbf{ p } $ and $ \mathbf{ q } $ will denote the projections of $ \R^{ n + 1 } \times \R $ onto its factor, that is, $ \mathbf{ p } ( x , z ) = x $ and $ \mathbf{ q } ( x , z ) = z $. Moreover, for simplicity of symbols, we set $ ( S )_i := ( M^{ \circ } \times [ 0 , \infty ) ) \cap S $ and $ ( S )_b := ( \partial M \times [ 0 , \infty ) ) \cap S $ for any set $ S \subset M \times \R $. In the same way, we define $ ( E )_i := M^{ \circ } \cap E $ and $ ( E )_b := \partial M \cap E $ for any set $ E \subset M $, without fear of confusion in notation. We fix an angle $ \theta \in ( 0 , \pi / 2 ) $ and let $ a = \cos \theta $.
	
	\subsection{Construction of approximate sequence of Brakke flow}
	\label{Constraction_approximate}
		We define the following functional for a set of finite perimeter $ S \subset M \times \R $:
		\begin{equation}
			\label{regularization_I}
			I^{ \varepsilon }_a ( S ) := \int_{ \partial^* S \cap ( M^{ \circ } \times ( - 1 , \infty ) ) } + a \int_{ \partial^* S \cap ( \partial M \times ( - 1 , \infty ) ) } \frac{ 1 }{ \varepsilon } e^{ - \frac{ z }{ \varepsilon } }\,d \mathcal{ H }^{ n + 1 } ( x , z ). 
		\end{equation}
		We will say that a minimizer $ S^{ \varepsilon } $ of $ I^{ \varepsilon }_a $ is a translating soliton with the capillarity. The name of ``translating soliton'' is based on the fact that the translation of $ \partial^* S^{ \varepsilon } $ in the $ z $-direction $ S^{ \varepsilon } - ( t / \varepsilon ) \e_z $ results in a MCF, as in the grim reaper type MCF and we will show this fact in Lemma \ref{e_Brakkeflow}. Note that $ \int \varepsilon^{ - 1 } e^{ - z / \varepsilon } \, d \mathcal{ H }^{ n + 1 } \lfloor_{ \partial S } ( x , z ) $ is the area functional for the metric $ g = e^{ - 2 z / ( ( n + 1 ) \varepsilon ) } \delta_{ \text{Eucl} } $, where $ \delta_{ \text{Eucl} } $ is the Euclidean metric. In this metric, $ \{ z = 0 \} $ is strictly convex with mean curvature pointing in the $ z $ direction.
		\vskip.3\baselineskip
		Let $ E_0 \subset M $ be a set of finite perimeter as an initial datum and we consider the minimization problem of $ I^{ \varepsilon }_a $ within the class $ C $ of sets of finite perimeter $ S' \subset M \times ( - 1 , \infty ) $ with $ \chi_S ( x , z ) = \chi_{ E_0 \times ( - 1 , 0 ] } ( x , z ) $ for $ \mathcal{ L }^{ n + 2 } $-almost everywhere $ ( x , z ) \in M \times ( - 1 , 0 ] $. From the compactness theorem of sets of locally finite perimeter, the class $ C $ is closed in the $ L^1_{loc} $ sense. Let $ \{ S^i \}_{ i = 1 }^\infty \subset C $ be a minimizing sequence for $ I_a^\varepsilon $. Since the capillary energy has the lower semi-continuous property (see \cite[Proposition 19.1]{maggi2012sets} for example) and we have local mass bounds, taking a subsequence if necessary, one can take a limit $ S^i \to S^\varepsilon \in C $ in the $ L^1_{ loc } $ sense and one can see that $ S^\varepsilon $ is a minimizer of $ I_a^\varepsilon $ among the class $ C $. From White's varifold maximum principle \cite[Theorem 1]{white2010maximum} applied to the interior varifold associated with $ \partial^* S^\varepsilon \cap ( M^{ \circ } \times [ 0 , \infty ) ) $ and the strict convexity of $ \{ z = 0 \} $, it follows that $ \mathcal{ H }^{ n + 1 } ( \partial^* S^\varepsilon \cap ( M^{ \circ } \times \{ 0 \} ) ) = 0 $. By combining this with $ \chi_{ S^\varepsilon } = \chi_{ E_0 \times ( - 1 , 0 ] } $ almost every where on $ M \times ( - 1 , 0 ] $, one may re-define $ S^\varepsilon $ so that $ \mathcal{ L }^{ n + 1 } ( S^\varepsilon_0 \setminus E_0 \cup E_0 \setminus S^\varepsilon_0 ) = 0 $, where $ S_0^\varepsilon := \{ x \in M \mid ( x , 0 ) \in S^\varepsilon \} $. To summarize, we have the following lemma (see \cite[Section 3.2]{ilmanen1994elliptic} and \cite[Section 9]{Edelen+2020+95+137} for details of the above discussion).
		\begin{lemma}
			\label{minimizer_of_I}
			Let $ E_0 \subset M $ be a set of finite perimeter. Then there exists a set of finite perimeter $ S^{ \varepsilon } \subset M \times ( - 1 , \infty ) $ such that
			\begin{enumerate}
				\item $ \mathcal{ H }^{ n + 1 } ( \partial^* S^\varepsilon \cap ( M^{ \circ } \times \{ 0 \} ) ) = 0 $ and $ \mathcal{ L }^{ n + 1 } ( S^\varepsilon_0 \setminus E_0 \cup E_0 \setminus S^\varepsilon_0 ) = 0 $, where $ S_0^\varepsilon := \{ x \in M \mid ( x , 0 ) \in S^\varepsilon \} $;
				\item for the weighted surface area of $ S^\varepsilon $ over $ M \times ( 0 , \infty ) $,
				\[
				\int_{ \partial^* S^{ \varepsilon } \cap ( M^{ \circ } \times ( 0 , \infty ) ) } + a \int_{ \partial^* S^{ \varepsilon } \cap ( \partial M \times ( 0 , \infty ) ) } \frac{ 1 }{ \varepsilon } e^{ - \frac{ z }{ \varepsilon } }\,d \mathcal{ H }^{ n + 1 } ( x , z ) \leq \mathcal{ H }^n ( ( \partial^* E_0 )_i ) + a \mathcal{ H }^n ( ( \partial^* E_0 )_b )
				\]
				holds;
				\item $ S^{ \varepsilon } $ is a minimizer of the functional $ I_a^{ \varepsilon } $ among the sets of finite perimeter $ S \subset M \times \R $ with $ \chi_S ( x , z ) = \chi_{ E_0 \times ( - 1 , 0 ] } ( x , z ) $ for $ \mathcal{ L }^{ n + 2 } $-almost everywhere $ ( x , z ) \in M \times ( - 1 , 0 ] $. In particular, $ S^\varepsilon $ is a local minimizer of $ I_a^\varepsilon $ in $ M \times ( 0 , \infty ) $.
			\end{enumerate}
	\end{lemma}
	
	For $ S^{ \varepsilon } $, calculating the first variation of $ I^{ \varepsilon }_a $ with respect to a test vector field $ X $ tangential to $ \partial M \times ( 0 , \infty ) $, we obtain the following equations and the contact angle structure can be discovered. For simplicity, an integral varifold and an integral rectifiable Radon measure are written identically.
	
	\begin{lemma}
		\label{E_L_eq}
		Let $ S^{ \varepsilon } $ be as in Lemma \ref{minimizer_of_I}. Then, $ ( \mathcal{ H }^{ n + 1 } \llcorner_{ ( \partial^* S^{ \varepsilon } )_i } , \mathcal{ H }^{ n + 1 } \llcorner_{ ( \partial^* S^{ \varepsilon } )_b } ) $ has the contact angle $ \theta $ in $ M \times ( 0 , \infty ) \subset \R^{ n + 1 } \times ( 0 , \infty ) $. Moreover, let $ H^{ \varepsilon } $ denote the mean curvature vector of $ ( \mathcal{ H }^{ n + 1 } \llcorner_{ ( \partial^* S^{ \varepsilon } )_i } , \mathcal{ H }^{ n + 1 } \llcorner_{ ( \partial^* S^{ \varepsilon } )_b } ) $ in $ M \times ( 0 , \infty ) \subset \R^{ n + 1 } \times ( 0 , \infty ) $, then we have all of the following for $ \mathcal{ H }^{ n + 1 } $-almost everywhere $ ( x , z ) \in ( \partial^* S^{ \varepsilon } )_i $:
		\begin{enumerate}
			\item $ \varepsilon H^{ \varepsilon } + P_{ T^{ \perp }_{ ( x , z ) } ( \partial^* S^{ \varepsilon } ) } ( \e_z ) = 0 $;
			\item $ \varepsilon \lv H^{ \varepsilon } \rv \leq 1 $;
			\item $ P_{ T^{ \perp }_{ ( x , z ) } ( \partial^* S^{ \varepsilon } ) } ( H^{ \varepsilon } ) = H^{ \varepsilon } $;
			\item $ \varepsilon^2 \lv H^{ \varepsilon } \rv^2 + \lv P_{ T_{ ( x , z ) } ( \partial^* S^{ \varepsilon } ) } ( \e_z ) \rv^2 = 1 $.
		\end{enumerate}
	\end{lemma}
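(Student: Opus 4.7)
The plan is to derive all four assertions from the Euler--Lagrange equation of the minimization problem solved by $ S^{ \varepsilon } $ (Lemma \ref{minimizer_of_I}). First I would pick a test vector field $ X \in C^1_c ( \R^{ n + 1 } \times ( 0 , \infty ) ; \R^{ n + 1 } \times \R ) $ with $ X ( x , z ) \in T_x \partial M \oplus \R \e_z $ whenever $ x \in \partial M $, so that the induced flow keeps the competitor inside $ M \times [ 0 , \infty ) $ and fixes the trace $ S^{ \varepsilon } \cap \{ z = 0 \} = E_0 \times \{ 0 \} $. Applying the standard first-variation formula for weighted area with $ f ( x , z ) := \frac{ 1 }{ \varepsilon } e^{ - z / \varepsilon } $ and using the minimality of $ S^{ \varepsilon } $ gives
\[
	0 = \int_{ ( \partial^* S^{ \varepsilon } )_i } ( f \dv_S X + X \cdot \D f )\,d \mathcal{ H }^{ n + 1 } + a \int_{ ( \partial^* S^{ \varepsilon } )_b } ( f \dv_S X + X \cdot \D f )\,d \mathcal{ H }^{ n + 1 }.
\]

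To recast this weighted identity as the contact angle condition \eqref{contact_angle_condition}, substitute $ Y := f X $ (again admissible, since $ f > 0 $ is smooth and tangency to $ \partial M \times \R $ is preserved by scalar multiplication) and use the elementary identity $ f \dv_S ( f^{ - 1 } Y ) + f^{ - 1 } Y \cdot \D f = \dv_S Y + Y \cdot ( \D \log f )^{ \perp } = \dv_S Y - \varepsilon^{ - 1 } Y \cdot P_{ T^{ \perp } } ( \e_z ) $. The Euler--Lagrange equation then becomes
\[
	\int_{ ( \partial^* S^{ \varepsilon } )_i } \dv_S Y\,d \mathcal{ H }^{ n + 1 } + a \int_{ ( \partial^* S^{ \varepsilon } )_b } \dv_S Y\,d \mathcal{ H }^{ n + 1 } = \frac{ 1 }{ \varepsilon } \int_{ ( \partial^* S^{ \varepsilon } )_i } Y \cdot P_{ T^{ \perp } } ( \e_z )\,d \mathcal{ H }^{ n + 1 } + \frac{ a }{ \varepsilon } \int_{ ( \partial^* S^{ \varepsilon } )_b } Y \cdot P_{ T^{ \perp } } ( \e_z )\,d \mathcal{ H }^{ n + 1 }.
\]
The geometric crux is that on the wetted part $ ( \partial^* S^{ \varepsilon } )_b $ the approximate tangent plane $ T_{ ( x , z ) } \partial^* S^{ \varepsilon } $ coincides with $ T_x \partial M \oplus \R \e_z $ for $ \mathcal{ H }^{ n + 1 } $-almost every point. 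Indeed, at a wetted point $ ( x , z ) \in \partial M \times ( 0 , \infty ) $ De Giorgi's blow-up of $ S^{ \varepsilon } $ must be a half-space contained in the blow-up of $ M \times [ 0 , \infty ) $, which is the half-space $ \{ v \cdot \nu_{ \partial M } \leq 0 \} $; since one half-space through the origin can be strictly contained in another only if they coincide, the blow-up is exactly $ \{ v \cdot \nu_{ \partial M } \leq 0 \} $, whose tangent hyperplane is $ T ( \partial M \times \R ) $. Hence $ \e_z $ is tangent to $ \partial^* S^{ \varepsilon } $ and $ P_{ T^{ \perp } } ( \e_z ) = 0 $ on $ ( \partial^* S^{ \varepsilon } )_b $, so the second right-hand term drops out. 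Defining $ H^{ \varepsilon } := - \varepsilon^{ - 1 } P_{ T^{ \perp } } ( \e_z ) $ on $ ( \partial^* S^{ \varepsilon } )_i $ turns the remaining identity into \eqref{contact_angle_condition} for the pair $ ( \mathcal{ H }^{ n + 1 } \llcorner_{ ( \partial^* S^{ \varepsilon } )_i } , \mathcal{ H }^{ n + 1 } \llcorner_{ ( \partial^* S^{ \varepsilon } )_b } ) $, establishing both the contact angle property and assertion (1).

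The remaining items are pointwise algebra on $ ( \partial^* S^{ \varepsilon } )_i $: assertion (3) is immediate since the right-hand side of (1) already lies in $ T^{ \perp } $; (2) is the trivial bound $ \lv H^{ \varepsilon } \rv = \varepsilon^{ - 1 } \lv P_{ T^{ \perp } } ( \e_z ) \rv \leq \varepsilon^{ - 1 } \lv \e_z \rv = \varepsilon^{ - 1 } $; and (4) follows by squaring (1) and invoking the Pythagorean identity $ \lv P_T ( \e_z ) \rv^2 + \lv P_{ T^{ \perp } } ( \e_z ) \rv^2 = \lv \e_z \rv^2 = 1 $. The one step deserving genuine care is the tangent-plane identification on the wetted boundary, which relies on De Giorgi's structure theorem together with the containment $ S^{ \varepsilon } \subset M \times [ 0 , \infty ) $; everything else is a routine calculation.
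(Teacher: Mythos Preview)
Your proposal is correct and follows essentially the same route as the paper: compute the weighted first variation of $I^{\varepsilon}_a$ along an admissible flow, absorb the weight via the substitution $Y = e^{-z/\varepsilon} X$, and read off $H^{\varepsilon} = -\varepsilon^{-1} P_{T^{\perp}}(\e_z)$, from which (1)--(4) are immediate algebra. The only difference worth noting is how the boundary term is killed: the paper computes the first variation of the wetted part directly with the Jacobian on $\partial M \times (0,\infty)$, so that the relevant projection is $P_{T^{\perp}_{(x,z)}(\partial M \times (0,\infty))}(\e_z)$, which vanishes trivially because $\e_z$ is tangent to the cylinder $\partial M \times (0,\infty)$. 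Your De Giorgi half-space containment argument reaches the same conclusion $T_{(x,z)}\partial^* S^{\varepsilon} = T_{(x,z)}(\partial M \times (0,\infty))$ on $(\partial^* S^{\varepsilon})_b$, but the simpler observation that $(\partial^* S^{\varepsilon})_b$ is an $\mathcal{H}^{n+1}$-full subset of the smooth hypersurface $\partial M \times (0,\infty)$ already forces its approximate tangent plane to agree with the ambient one, without invoking blow-up containment.
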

	\begin{proof}
		Let $ X \in \mathcal{ T }_{ \partial M \times ( 0 , \infty ) } C^1_c ( \R^{ n + 1 } \times ( 0 , \infty ) ; \R^{ n + 2 } ) $, and we define the map $ \Phi^{ \delta } ( x , t ) $ by a solution of the following Cauchy problem
		\[
		\frac{ \partial }{ \partial \delta } \Phi^{ \delta } ( x , t ) = X ( \Phi^{ \delta } ( x , t ) ), \quad \Phi^0 ( x , t ) = ( x , t ), \quad \text{ for } ( x , t ) \in \R^{ n + 1 } \times ( 0 , \infty ),
		\]
		and sufficiently small $ \delta > 0 $. One deduces $ \Phi^{ \delta } ( \partial M \times ( 0 , \infty ) ) \subset \partial M \times ( 0 , \infty ) $ and $ \Phi^{ \delta } ( M \times ( 0 , \infty ) ) \subset M \times ( 0 , \infty ) $ from $ X \in \mathcal{ T }_{ \partial M \times ( 0 , \infty ) } C^1_c ( \R^{ n + 1 } \times ( 0 , \infty ) ; \R^{ n + 2 } ) $ and the uniqueness of the Cauchy problem. In particular, one obtains $ \Phi^{ \delta } ( S^{ \varepsilon } ) \subset M \times ( 0 , \infty ) $. Define $ \lv J_{ \partial^* S^{ \varepsilon } } \Phi^{ \delta } \rv $ and $ \lv J_{ \partial M \times ( 0 , \infty ) } \Phi^{ \delta } \rv $ to be the Jacobians of $ \Phi^{ \delta } $ on $ \partial^* S^{ \varepsilon } $ and $ \partial M \times ( 0 , \infty ) $, respectively. Calculating the first variation of $ I^{ \varepsilon }_a $, we obtain
		\begin{equation}
			\begin{split}
				\label{first_variation_I}
				&\frac{ d }{ d \delta } ( I^{ \varepsilon }_a ( \Phi^{ \delta } ( S^{ \varepsilon } ) ) ) \biggr|_{ \delta = 0 }\\
				&= \int_{ ( \partial^* S^{ \varepsilon } )_i } \frac{ 1 }{ \varepsilon } \frac{ d }{ d \delta } e^{ - \frac{ \mathbf{ q } ( \Phi^{ \delta } ( x , z ) ) }{ \varepsilon } } \biggr|_{ \delta = 0 } \lv J_{ \partial^* S^{ \varepsilon } } \Phi^0 \rv + \frac{ 1 }{ \varepsilon } e^{ - \frac{ \mathbf{ q } ( \Phi^0 ( x , z ) ) }{ \varepsilon } } \frac{ d }{ d \delta } \lv J_{ \partial^* S^{ \varepsilon } } \Phi^{ \delta } \rv \biggr|_{ \delta = 0 }\,d \mathcal{ H }^{ n + 1 }\\
				&\quad + a \int_{ ( \partial^* S^{ \varepsilon } )_b } \frac{ 1 }{ \varepsilon } \frac{ d }{ d \delta } e^{ - \frac{ \mathbf{ q } ( \Phi^{ \delta } ( x , z ) ) }{ \varepsilon } } \biggr|_{ \delta = 0 } \lv J_{ \partial M \times ( 0 , \infty ) } \Phi^0 \rv + \frac{ 1 }{ \varepsilon } e^{ - \frac{ \mathbf{ q } ( \Phi^0 ( x , z ) ) }{ \varepsilon } } \frac{ d }{ d \delta } \lv J_{ \partial M \times ( 0 , \infty ) } \Phi^{ \delta } \rv \biggr|_{ \delta = 0 }\,d \mathcal{ H }^{ n + 1 }\\
				&= \int_{ ( \partial^* S^{ \varepsilon } )_i } - \frac{ 1 }{ \varepsilon^2 } e^{ - \frac{ z }{ \varepsilon } } \e_z \cdot X + \frac{ 1 }{ \varepsilon } e^{ - \frac{ z }{ \varepsilon } } P_{ T_{ ( x , z ) } ( \partial^* S^{ \varepsilon } ) } : \D' X\,d \mathcal{ H }^{ n + 1 }\\
				&\quad + a \int_{ ( \partial^* S^{ \varepsilon } )_b } - \frac{ 1 }{ \varepsilon^2 } e^{ - \frac{ z }{ \varepsilon } }\e_z \cdot X + \frac{ 1 }{ \varepsilon } e^{ - \frac{ z }{ \varepsilon } } P_{ T_{ ( x , z ) } ( \partial M \times ( 0 , \infty ) ) } : \D' X\,d \mathcal{ H }^{ n + 1 }\\
				&= \int_{ ( \partial^* S^{ \varepsilon } )_i } \frac{ 1 }{ \varepsilon } \Biggl( P_{ T_{ ( x , z ) } ( \partial^* S^{ \varepsilon } ) } : \D' ( e^{ - \frac{ z }{ \varepsilon } } X ) - \frac{ 1 }{ \varepsilon }  P_{ T^{ \perp }_{ ( x , z ) } ( \partial^* S^{ \varepsilon } ) } ( \e_z ) \cdot ( e^{ - \frac{ z }{ \varepsilon } } X ) \Biggr)\,d \mathcal{ H }^{ n + 1 }\\
				&\quad + a \int_{ ( \partial^* S^{ \varepsilon } )_b } \frac{ 1 }{ \varepsilon } \Biggl( P_{ T_{ ( x , z ) } ( \partial^* S^{ \varepsilon } ) } : \D' ( e^{ - \frac{ z }{ \varepsilon } } X ) - \frac{ 1 }{ \varepsilon }  P_{ T^{ \perp }_{ ( x , z ) } ( \partial M \times ( 0 , \infty ) ) } ( \e_z ) \cdot ( e^{ - \frac{ z }{ \varepsilon } } X ) \Biggr)\,d \mathcal{ H }^{ n + 1 }.
			\end{split}
		\end{equation}
		Since $ S^{ \varepsilon } $ is the minimizer of $ I^{ \varepsilon }_a $ and $ X \in \mathcal{ T }_{ \partial M \times ( 0 , \infty ) } C^1_c ( \R^{ n + 1 } \times ( 0 , \infty ) ; \R^{ n + 2 } ) $, it follows from replacing $ e^{ - z / \varepsilon } X $ to $ X $ in \eqref{first_variation_I} that
		\begin{equation}
			\label{translative_MC}
			\begin{split}
				\int_{ ( \partial^* S^{ \varepsilon } )_i } &P_{ T_{ ( x , z ) } ( \partial^* S^{ \varepsilon } ) } : \D' X\,d \mathcal{ H }^{ n + 1 } + a \int_{ ( \partial^* S^{ \varepsilon } )_b } P_{ T_{ ( x , z ) } ( \partial M \times ( 0 , \infty ) ) } : \D' X\,d \mathcal{ H }^{ n + 1 }\\
				&= \int_{ ( \partial^* S^{ \varepsilon } )_i } \frac{ 1 }{ \varepsilon } P_{ T^{ \perp }_{ ( x , z ) } ( \partial^* S^{ \varepsilon } ) } ( \e_z ) \cdot X\,d \mathcal{ H }^{ n + 1 },
			\end{split}
		\end{equation}
		which implies that $ ( \mathcal{ H }^{ n + 1 } \llcorner_{ ( \partial^* S^{ \varepsilon } )_i } , \mathcal{ H }^{ n + 1 } \llcorner_{ ( \partial^* S^{ \varepsilon } )_b } ) $ has the contact angle $ \theta $ in $ M \times ( 0 , \infty ) $ and its mean curvature $ H^{ \varepsilon } $ equals $ - P_{ T^{ \perp }_{ ( x , z ) } ( \partial^* S^{ \varepsilon } ) } ( \e_z ) / \varepsilon $. Equations (1)-(4) follow from the obtained mean curvature formula.
	\end{proof}
	
	Here, we provide an overview of the changes in the argument in
	\cite[Section 4]{ilmanen1994elliptic} under the contact angle condition. For the detailed discussion, therefore, see \cite[Section 4]{ilmanen1994elliptic}. For all $ \xi \in C^1_c ( ( 0 , \infty ) ) $, note that $ X ( x , z ) = \xi ( z ) \e_z $ is of $ \mathcal{ T }_{ \partial M \times ( 0 , \infty ) } C^1_c ( \R^{ n + 1 } \times ( 0 , \infty ) ; \R^{ n + 2 } ) $. Plugging this $ X $ into the third line of \eqref{first_variation_I}, we have
	\begin{equation}
		\label{time_slice_1}
		\begin{split}
			0 = &\int_{ ( \partial^* S^{ \varepsilon } )_i } - \frac{ 1 }{ \varepsilon } e^{ - \frac{ z }{ \varepsilon } } \xi + e^{ - \frac{ z }{ \varepsilon } } \e_z \cdot P_{ T_{ ( x , z ) } ( \partial^* S^{ \varepsilon } ) } ( \e_z ) \partial_z \xi\,d \mathcal{ H }^{ n + 1 }\\
			&+ a \int_{ ( \partial^* S^{ \varepsilon } )_b } - \frac{ 1 }{ \varepsilon } e^{ - \frac{ z }{ \varepsilon } } \xi + e^{ - \frac{ z }{ \varepsilon } } \e_z \cdot P_{ T_{ ( x , z ) } ( \partial M \times ( 0 , \infty ) ) } ( \e_z ) \partial_z \xi\,d \mathcal{ H }^{ n + 1 }.
		\end{split}
	\end{equation}
	Let $ \xi : ( 0 , \infty ) \to \R $ be Lipschitz with $ \supp \xi \subset \subset ( 0 , \infty ) $. Consider the approximation of $ \xi $ by $ \xi^i $ such that
	\[
	\xi^i \leq \xi , \quad \xi^i \to \xi \text{ uniformly}, \quad \partial_z \xi^i \rightharpoonup \partial_z \xi \text{ weakly-$ {}^* $ in } L^{ \infty } ( 0 , \infty ),
	\]
	\eqref{time_slice_1} is also true for the Lipschitz function $ \xi $ (such an approximation can be taken by general measure theory, see \cite[Theorem 6.11]{evans2015measure} for example). Replacing $ \xi $ to $ e^{ z / \varepsilon } \xi $ in \eqref{time_slice_1}, since $ \varepsilon H^{ \varepsilon } = - P_{ T^{ \perp }_{ ( x , z ) } ( \partial^* S^{ \varepsilon } ) } ( \e_z ) $ for $ \mathcal{ H }^{ n + 1 } \llcorner_{ ( \partial^* S^{ \varepsilon } )_i } $-almost everywhere by Lemma \ref{E_L_eq} (1) and $ \e_z $ is tangential to $ \partial M \times ( 0 , \infty ) $, we have
	\begin{equation}
		\label{times_slice_2}
		\begin{split}
			0 = &\int_{ ( \partial^* S^{ \varepsilon } )_i } - \varepsilon \xi \lv H^{ \varepsilon } \rv^2 + \lv P_{ T_{ ( x , z ) } ( \partial^* S^{ \varepsilon } ) } ( \e_z ) \rv^2 \partial_z \xi\,d \mathcal{ H }^{ n + 1 }\\
			&+ a \int_{ ( \partial^* S^{ \varepsilon } )_b } \partial_z \xi\,d \mathcal{ H }^{ n + 1 }.
		\end{split}
	\end{equation}
	Let $ 0 < z_1 < z_2 $ be arbitrary and we define a Lipschitz map $ \xi $ for sufficiently small $ \delta > 0 $ by
	\[
	\xi_{ \delta } ( z ) := \begin{cases}
		0 &( z \in [ 0 , z_1 ] )\\
		1 &( z \in [ z_1 + \delta , z_2 ] )\\
		0 &( z \in [ z_2 + \delta , \infty ) )
	\end{cases}
	\]
	and linearly interpolated between. Plugging this $ \xi_{ \delta } $ into \eqref{times_slice_2} leads to
	\begin{equation*}
		\begin{split}
			&\int_{ ( \partial^* S^{ \varepsilon } )_i } \varepsilon \xi_{ \delta } \lv H^{ \varepsilon } \rv^2\,d \mathcal{ H }^{ n + 1 }\\
			&= \Biggl( \frac{ 1 }{ \delta } \int_{ ( M^{ \circ } \times ( z , z + \delta ) ) \cap \partial^* S^{ \varepsilon } } \lv P_{ T_{ ( x , z ) } ( \partial^* S^{ \varepsilon } ) } ( \e_z ) \rv^2\,d \mathcal{ H }^{ n + 1 } + \frac{ 1 }{ \delta } \int_{ ( \partial M \times ( z , z + \delta ) ) \cap \partial^* S^{ \varepsilon } } a\,d \mathcal{ H }^{ n + 1 } \Biggr) \Biggr\rv^{ z_2 }_{ z = z_1 }.
		\end{split}
	\end{equation*}
	This implies that, for any fixed $ \delta > 0 $, the function
	\[
	f_{ \delta } ( z ) := \frac{ 1 }{ \delta } \int_{ ( M^{ \circ } \times ( z , z + \delta ) ) \cap \partial^* S^{ \varepsilon } } \lv P_{ T_{ ( x , z ) } ( \partial^* S^{ \varepsilon } ) } ( \e_z ) \rv^2\,d \mathcal{ H }^{ n + 1 } + \frac{ 1 }{ \delta } \int_{ ( \partial M \times ( z , z + \delta ) ) \cap \partial^* S^{ \varepsilon } } a\,d \mathcal{ H }^{ n + 1 }
	\]
	is decreasing in $ z \geq 0 $. Next, we define a Lipschitz map $ \xi_{ L , \delta } $ for $ \delta > 0 $ and $ L > 0 $ by
	\[
	\xi_{ L , \delta } ( z ) := \begin{cases}
		0 &( z \in [ 0 , z_1 ) )\\
		1 &( z = z_1 + \delta )\\
		0 &( z \in [ z_1 + \delta + L , \infty ) )
	\end{cases}
	\]
	and linearly interpolated between. Plugging this $ \xi_{ L , \delta } $ into \eqref{time_slice_1}, we compute
	\begin{equation*}
		\begin{split}
			&\int_{ ( \partial^* S^{ \varepsilon } )_i } + a \int_{ ( \partial^* S^{ \varepsilon } )_b } \frac{ 1 }{ \varepsilon } e^{ - \frac{ z }{ \varepsilon } } \xi_{ L , \delta }\,d \mathcal{ H }^{ n + 1 }\\
			&= \frac{ 1 }{ \delta } \int_{ ( M^{ \circ } \times ( z_1 , z_1 + \delta ) ) \cap \partial^* S^{ \varepsilon } } + \frac{ a }{ \delta } \int_{ ( \partial M \times ( z_1 , z_1 + \delta ) ) \cap \partial^* S^{ \varepsilon } } e^{ - \frac{ z }{ \varepsilon } } \lv P_{ T_{ ( x , z ) } ( \partial^* S^{ \varepsilon } ) } ( \e_z ) \rv^2\,d \mathcal{ H }^{ n + 1 }\\
			&- \frac{ 1 }{ L } \int_{ ( M^{ \circ } \times ( z_1 + \delta , z_1 + \delta + L ) ) } - \frac{ a }{ L } \int_{ ( \partial M \times ( z_1 + \delta, z_1 + \delta + L ) ) \cap \partial^* S^{ \varepsilon } } e^{ - \frac{ z }{ \varepsilon } } \lv P_{ T_{ ( x , z ) } ( \partial^* S^{ \varepsilon } ) } ( \e_z ) \rv^2\,d \mathcal{ H }^{ n + 1 }.
		\end{split}
	\end{equation*}
	The sum of the bottom two terms is bounded by $ ( \varepsilon I^{ \varepsilon }_a ( S^{ \varepsilon } ) ) / L $, and these terms vanish as $ L \to \infty $. Thus, by taking $ L \to \infty $, and then $ \delta \to + 0 $, we obtain
	\[
	\lim_{ \delta \to + 0 } \frac{ 1 }{ \delta } \Biggl( \int_{ ( M^{ \circ } \times ( z_1 , z_1 + \delta ) ) \cap \partial^* S^{ \varepsilon } } + a \int_{ ( \partial M \times ( z_1 , z_1 + \delta ) ) \cap \partial^* S^{ \varepsilon } } e^{ - \frac{ z }{ \varepsilon } } \lv P_{ T_{ ( x , z ) } ( \partial^* S^{ \varepsilon } ) } ( \e_z ) \rv^2\,d \mathcal{ H }^{ n + 1 } \Biggr) \leq I^{ \varepsilon }_a ( S^{ \varepsilon } ).
	\]
	We approximate the measurable set $ A \subset [ 0 , \infty ) $ to many small intervals and use the inequality above. By taking the limit of the width of the intervals to $ 0 $ and the monotonicity of $ f_{ \delta } $ and Lemma \ref{minimizer_of_I} (1), we obtain
	\[
	\frac{ 1 }{ \mathcal{ L }^1 ( A ) } \Biggl( \int_{ ( M^{ \circ } \times A ) \cap \partial^* S^{ \varepsilon } } + a \int_{ ( \partial M \times A ) \cap \partial^* S^{ \varepsilon } } \lv P_{ T_{ ( x , z ) } ( \partial^* S^{ \varepsilon } ) } ( \e_z ) \rv^2\,d \mathcal{ H }^{ n + 1 } \Biggr) \leq I^{ \varepsilon }_a ( S^{ \varepsilon } )
	\]
	for any measurable set $ A \subset [ 0 , \infty ) $. Therefore, by the Lebesgue differentiation theorem and the co-area formula (see \cite[Theorem 13.1]{maggi2012sets}, for example), we obtain the following lemma.
	
	\begin{lemma}
		For almost every $ 0 \leq z_1 < z_2 < \infty $,
		\begin{equation}
			\label{time_slice_3}
			\begin{split}
				&\int_{ ( \partial^* S^{ \varepsilon }_z )_i } \lv P_{ T_{ ( x , z ) } ( \partial^* S^{ \varepsilon } ) } ( \e_z ) \rv\,d \mathcal{ H }^n ( x ) \biggr|^{ z_2 }_{ z = z_1 }
				+ a \int_{ ( \partial^* S^{ \varepsilon }_z )_b } d \mathcal{ H }^n ( x )
				\biggr|^{ z_2 }_{ z = z_1 }\\
				&= - \int_{ ( M^{ \circ } \times ( z_1 , z_2 ) ) \cap \partial^* S^{ \varepsilon } } \varepsilon \lv H^{ \varepsilon } \rv^2\,d \mathcal{ H }^{ n + 1 } ( x , z ).
			\end{split}
		\end{equation}
		In particular, by Lemma \ref{minimizer_of_I} (2),
		\begin{equation}
			\label{L2MC_estimate}
			\int_{ ( \partial^* S^{ \varepsilon } )_i } \varepsilon \lv H^{ \varepsilon } \rv^2\,d \mathcal{ H }^{ n + 1 } ( x , z ) \leq \mathcal{ H }^n ( ( \partial^* E_0 )_i ) + a \mathcal{ H }^n ( ( \partial^* E_0 )_b ),
		\end{equation}
		\begin{equation}
			\label{S_mass_estimate}
			\begin{split}
				\sup_{ z > 0 } \Biggl( \int_{ ( \partial^* S^{ \varepsilon }_z )_i } \lv P_{ T_{ ( x , z ) } ( \partial^* S^{ \varepsilon } ) } ( \e_z ) \rv\,d \mathcal{ H }^n + a \int_{ ( \partial^* S^{ \varepsilon }_z )_b } d \mathcal{ H }^n \Biggr) \leq \mathcal{ H }^n ( ( \partial^* E_0 )_i ) + a \mathcal{ H }^n ( ( \partial^* E_0 )_b ).
			\end{split}
		\end{equation}
	\end{lemma}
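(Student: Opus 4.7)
The plan is to extract all three displays of the lemma from equation \eqref{times_slice_2} together with the inequality stated just above the lemma, via the coarea formula and the Lebesgue differentiation theorem.

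For \eqref{time_slice_3}, I would fix $z_1 < z_2$ in $(0,\infty)$ and plug into \eqref{times_slice_2} a Lipschitz cutoff $\xi_k$ approximating $\chi_{(z_1,z_2)}$: each $\xi_k$ equals $1$ on $(z_1+1/k,z_2-1/k)$, vanishes off $(z_1,z_2)$, and is affine on the two narrow transition strips. The coarea formula for the Lipschitz map $(x,z)\mapsto z$ on the countably $(n+1)$-rectifiable set $\partial^*S^\varepsilon$ rewrites the two $\partial_z\xi_k$ integrals of \eqref{times_slice_2} as
\[
\int_0^\infty \partial_z\xi_k(z)\,F_i(z)\,dz \quad \text{and} \quad \int_0^\infty a\,\partial_z\xi_k(z)\,F_b(z)\,dz,
\]
where $F_i(z):=\int_{(\partial^*S^\varepsilon_z)_i}\lv P_{T_{(x,z)}(\partial^*S^\varepsilon)}(\e_z)\rv\,d\mathcal{H}^n$ and $F_b(z):=\mathcal{H}^n((\partial^*S^\varepsilon_z)_b)$. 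The disappearance of the weight $\lv P_T(\e_z)\rv$ from the boundary slice reflects the fact that $(\partial^*S^\varepsilon)_b$ is a full-$\mathcal{H}^{n+1}$ subset of the smooth $(n+1)$-manifold $\partial M\times(0,\infty)$, whose tangent planes are $T_x(\partial M)\oplus\R\e_z$ and therefore contain $\e_z$ almost everywhere, forcing $\lv P_T(\e_z)\rv=1$ there.

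Dominated convergence handles the left-hand side of \eqref{times_slice_2} against $\xi_k$, producing $\int_{(M^\circ\times(z_1,z_2))\cap\partial^*S^\varepsilon}\varepsilon\lv H^\varepsilon\rv^2\,d\mathcal{H}^{n+1}$. Coarea also yields $F_i,F_b\in L^1_{\mathrm{loc}}(0,\infty)$, so for pairs $(z_1,z_2)$ in a full-$\mathcal{L}^2$-measure subset of $(0,\infty)^2$ both $z_1$ and $z_2$ are Lebesgue points of $F_i+aF_b$, and the distributional action of $\partial_z\xi_k$ converges as $k\to\infty$ to $[F_i(z_1)+aF_b(z_1)]-[F_i(z_2)+aF_b(z_2)]$; this gives \eqref{time_slice_3}. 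For \eqref{S_mass_estimate}, the same coarea identity converts the inequality displayed just above the lemma into $\int_A[F_i(z)+aF_b(z)]\,dz\leq \mathcal{L}^1(A)\,I^\varepsilon_a(S^\varepsilon)$ for every measurable $A\subset(0,\infty)$; Lebesgue differentiation then gives $F_i(z)+aF_b(z)\leq I^\varepsilon_a(S^\varepsilon)$ at a.e.\ $z$, and Lemma \ref{minimizer_of_I}(2) supplies the initial-surface bound. Finally, \eqref{L2MC_estimate} follows from \eqref{time_slice_3} by discarding the nonnegative $F_i(z_2)+aF_b(z_2)$, bounding $F_i(z_1)+aF_b(z_1)$ via \eqref{S_mass_estimate}, and letting $z_1\to 0^+$ and $z_2\to\infty$ by monotone convergence.

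The main technical obstacle is the careful application of coarea to the non-smooth rectifiable set $\partial^*S^\varepsilon$: one must verify that for $\mathcal{L}^1$-a.e.\ $z$ the slice $\partial^*S^\varepsilon_z$ agrees $\mathcal{H}^n$-a.e.\ with the $\mathbf{q}=z$ cross-section of $\partial^*S^\varepsilon$ and splits correctly into its interior and boundary parts, and that the tangent-plane identification on $\partial M\times(0,\infty)$ holds $\mathcal{H}^{n+1}$-a.e.\ so that $\lv P_T(\e_z)\rv$ drops out of the boundary integrand $F_b$. Once these points are secured, the equality \eqref{time_slice_3} and both estimates drop out of a single coarea-plus-differentiation scheme.
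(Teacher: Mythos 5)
Your proposal is correct and follows essentially the same route as the paper: testing \eqref{times_slice_2} with trapezoidal Lipschitz cutoffs, slicing via the coarea formula for $\mathbf{q}$ (with the coarea factor $\lv P_{T}(\e_z)\rv$ absorbing one power of the weight in the interior and equalling $1$ on the boundary portion), and concluding by Lebesgue differentiation, with \eqref{L2MC_estimate} obtained from \eqref{time_slice_3} and \eqref{S_mass_estimate} by letting $z_1 \to 0^+$ and $z_2 \to \infty$. Your explicit identification of the tangent planes of $(\partial^* S^{\varepsilon})_b$ with those of $\partial M \times (0,\infty)$ is exactly the point the paper delegates to Proposition \ref{constancy}, so no gap remains.
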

	
	Now consider $ E^{ \varepsilon } := \kappa_{ \varepsilon } ( S^{ \varepsilon } ) $ in which $ S^{ \varepsilon } $ is shrunk by the map $ \kappa_{ \varepsilon } ( x , z ) = ( x , \varepsilon z ) $ for $ ( x , z ) \in M \times ( 0 , \infty ) $. By the same calculation as in \cite[Section 5.1 and 5.3]{ilmanen1994elliptic}, one can obtain the following lemma for the mass of $ \partial^* S^{ \varepsilon } $ and $ \partial^* E^{ \varepsilon } $.
	
	\begin{lemma}
		\label{boundedness_SandE}
		For any open interval $ I = ( z_1 , z_2 ) \subset [ 0 , \infty ) $, we obtain
		\begin{equation}
			\label{themassbddS}
			\begin{split}
				( \mathcal{ H }^{ n + 1 } \llcorner_{ ( \partial^* S^{ \varepsilon } )_i } + a \mathcal{ H }^{ n + 1 } \llcorner_{ ( \partial^* S^{ \varepsilon } )_b } ) ( M \times I ) \leq \bigl( \mathcal{ L }^1 ( I ) + \varepsilon \bigr) ( \mathcal{ H }^n ( ( \partial^* E_0 )_i ) + a \mathcal{ H }^n ( ( \partial^* E_0 )_b ) ),
			\end{split}
		\end{equation}
		\begin{equation}
			\label{themassbddE}
			\begin{split}
				( \mathcal{ H }^{ n + 1 } \llcorner_{ ( \partial^* E^{ \varepsilon } )_i } &+ a \mathcal{ H }^{ n + 1 } \llcorner_{ ( \partial^* E^{ \varepsilon } )_b } ) ( M \times I )\\
				&\leq \bigl( \mathcal{ L }^1 ( I ) + \varepsilon^2 + ( \mathcal{ L }^1 ( I ) + \varepsilon^2 )^{ \frac 1 2 } \bigr) ( \mathcal{ H }^n ( ( \partial^* E_0 )_i ) + a \mathcal{ H }^n ( ( \partial^* E_0 )_b ) ).
			\end{split}
		\end{equation}
		In particular, the result holds for any $ \mathcal{ L }^1 $-measurable set $ I \subset [ 0 , \infty ) $ by approximation and Lemma \ref{minimizer_of_I} (1).
	\end{lemma}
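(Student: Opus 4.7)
The plan is to establish \eqref{themassbddS} first and then derive \eqref{themassbddE} from it via the area formula for $ \kappa_{ \varepsilon } $. For the first bound, I would exploit the pointwise identity $ \lv P_T ( \e_z ) \rv^2 + \varepsilon^2 \lv H^{ \varepsilon } \rv^2 = 1 $ on $ ( \partial^* S^{ \varepsilon } )_i $ from Lemma \ref{E_L_eq} (4), together with the observation that on $ ( \partial^* S^{ \varepsilon } )_b $ the tangent space coincides with $ T ( \partial M \times ( 0 , \infty ) ) $ and hence contains $ \e_z $ (by Proposition \ref{constancy} applied to the boundary portion of $ \partial^* S^{ \varepsilon } $), so that $ \lv P_T ( \e_z ) \rv = 1 $ there. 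Writing the constant $ 1 $ on each piece as $ \lv P_T ( \e_z ) \rv^2 $ (plus $ \varepsilon^2 \lv H^{ \varepsilon } \rv^2 $ in the interior) and applying the coarea formula for $ \mathbf{ q } $ restricted to $ \partial^* S^{ \varepsilon } $ (whose tangential Jacobian equals $ \lv P_T ( \e_z ) \rv $) converts the $ \lv P_T ( \e_z ) \rv^2 $ integrals into $ \int_I f ( z )\,d z $, where $ f ( z ) $ is the slicewise quantity on the left-hand side of \eqref{S_mass_estimate}. Then \eqref{S_mass_estimate} and \eqref{L2MC_estimate} bound these respectively by $ \mathcal{ L }^1 ( I ) M_0 $ and $ \varepsilon M_0 $ with $ M_0 := \mathcal{ H }^n ( ( \partial^* E_0 )_i ) + a \mathcal{ H }^n ( ( \partial^* E_0 )_b ) $, yielding \eqref{themassbddS}.

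For \eqref{themassbddE}, since $ \partial^* E^{ \varepsilon } = \kappa_{ \varepsilon } ( \partial^* S^{ \varepsilon } ) $, the area formula converts the left-hand side into an integral over $ \partial^* S^{ \varepsilon } \cap ( M \times ( I / \varepsilon ) ) $ weighted by the tangential Jacobian of $ \kappa_{ \varepsilon } $. Decomposing the tangent $ ( n + 1 ) $-plane $ T $ into its horizontal part and the unique direction tilted by the angle $ \alpha $ with $ \sin \alpha = \lv P_T ( \e_z ) \rv $, and inserting Lemma \ref{E_L_eq} (4) to write $ \cos^2 \alpha = \varepsilon^2 \lv H^{ \varepsilon } \rv^2 $, a direct computation gives $ J_{ \kappa_{ \varepsilon } } |_T = \varepsilon \sqrt{ 1 + ( 1 - \varepsilon^2 ) \lv H^{ \varepsilon } \rv^2 } \leq \varepsilon ( 1 + \lv H^{ \varepsilon } \rv ) $ on $ ( \partial^* S^{ \varepsilon } )_i $, and $ J_{ \kappa_{ \varepsilon } } |_T = \varepsilon $ on $ ( \partial^* S^{ \varepsilon } )_b $ (where $ \e_z \in T $). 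The bulk term contributes $ \varepsilon \bigl( \mathcal{ L }^1 ( I / \varepsilon ) + \varepsilon \bigr) M_0 = ( \mathcal{ L }^1 ( I ) + \varepsilon^2 ) M_0 $ via \eqref{themassbddS} for the interval $ I / \varepsilon $, while the $ \varepsilon \lv H^{ \varepsilon } \rv $ cross term is handled by Cauchy--Schwarz: the square root of $ \varepsilon^2 \int \lv H^{ \varepsilon } \rv^2 \,d \mathcal{ H }^{ n + 1 } \leq \varepsilon M_0 $ times the square root of $ \mathcal{ H }^{ n + 1 } ( ( \partial^* S^{ \varepsilon } )_i \cap M \times ( I / \varepsilon ) ) \leq ( \mathcal{ L }^1 ( I ) / \varepsilon + \varepsilon ) M_0 $ (by \eqref{L2MC_estimate} and the already proven \eqref{themassbddS}) equals exactly $ M_0 \sqrt{ \mathcal{ L }^1 ( I ) + \varepsilon^2 } $.

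The extension from open intervals $ I = ( z_1 , z_2 ) $ to arbitrary $ \mathcal{ L }^1 $-measurable $ I \subset [ 0 , \infty ) $ would follow from inner/outer regularity of $ \mathcal{ L }^1 $ together with the subadditivity of both bounds as functions of $ \mathcal{ L }^1 ( I ) $ (linear in \eqref{themassbddS}, concave in \eqref{themassbddE}). The step most likely to require care is the Jacobian computation, and in particular the clean identification on the boundary portion $ ( \partial^* S^{ \varepsilon } )_b $: the fact that $ \e_z $ is automatically tangent there via Proposition \ref{constancy} is precisely what lets the $ a $-weighted boundary contribution merge with the interior contribution into the single slicewise quantity $ f ( z ) $ already used in the monotonicity identity \eqref{time_slice_3}, and hence lets both bounds be closed with nothing more than \eqref{S_mass_estimate} and \eqref{L2MC_estimate}.
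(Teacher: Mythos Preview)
Your proposal is correct and follows essentially the same route as the argument the paper defers to (Ilmanen \cite[Sections 5.1 and 5.3]{ilmanen1994elliptic}): splitting $1 = \lv P_T(\e_z)\rv^2 + \varepsilon^2\lv H^\varepsilon\rv^2$ on $(\partial^* S^\varepsilon)_i$, applying the coarea formula for $\mathbf q$ and invoking \eqref{S_mass_estimate}--\eqref{L2MC_estimate} to obtain \eqref{themassbddS}, and then computing the tangential Jacobian of $\kappa_\varepsilon$ together with Cauchy--Schwarz on the $\varepsilon\lv H^\varepsilon\rv$ term for \eqref{themassbddE}. One minor remark: the appeal to Proposition \ref{constancy} for the fact that $\e_z$ is tangent on $(\partial^* S^\varepsilon)_b$ is unnecessary, since this follows directly from $S^\varepsilon \subset M \times [0,\infty)$ (the outer normal at a point of $\partial^* S^\varepsilon \cap (\partial M \times (0,\infty))$ coincides with the outer normal of $M \times [0,\infty)$, which is horizontal).
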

	
	For this $ S^{ \varepsilon } $, we define the following notations;
	\begin{equation}
		\label{notations}
		\begin{split}
			\sigma_{ - t / \varepsilon } ( x , z ) := \Biggl( x , z - \frac{ t }{ \varepsilon } \Biggr),& \quad S^{ \varepsilon } ( t ) := \sigma_{ - t / \varepsilon } ( S^{ \varepsilon} ),\\
			\mu_t^{ \varepsilon } := \mathcal{ H }^{ n + 1 } \llcorner_{ \partial^* S^{ \varepsilon } ( t ) \cap ( M^{ \circ } \times ( - t / \varepsilon , \infty ) ) },& \quad \nu_t^{ \varepsilon } := \mathcal{ H }^{ n + 1 } \llcorner_{ \partial^* S^{ \varepsilon } ( t ) \cap ( \partial M \times ( - t / \varepsilon , \infty ) ) }.
		\end{split}
	\end{equation}
	
	The Euler--Lagrange equation in Lemma \ref{E_L_eq} implies that the above translated surface measure $ \mu^{ \varepsilon }_t $ must be a MCF in the interior region. Moreover, we show that the pair $ ( \mu^{ \varepsilon }_t , \nu^{ \varepsilon }_t ) $ is a Brakke flow as defined in Definition \ref{def_of_Brakke} up to the boundary.
	
	\begin{lemma}
		\label{e_Brakkeflow}
		The pair of varifolds $ ( \mu^{ \varepsilon }_t , \nu^{ \varepsilon }_t ) $ is a Brakke flow in the set
		\[
		W^{ \varepsilon } := \Biggl\{ ( x , z , t ) \in ( M \times \R ) \times [ 0 , \infty ) : z > - \frac{ t }{ \varepsilon } \Biggr\},
		\]
		and has the contact angle $ \theta $ in $ M \times ( - t / \varepsilon , \infty ) \subset \R^{ n + 1 } \times ( - t / \varepsilon , \infty ) $ for each $ t > 0 $.
	\end{lemma}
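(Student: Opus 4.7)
The plan is to deduce both assertions from the translation invariance of $ I^{ \varepsilon }_a $ in the $ z $-direction together with the Euler--Lagrange identity \eqref{translative_MC}. Since $ \sigma_{ - t / \varepsilon } $ is an isometry of $ \R^{ n + 1 } \times \R $ preserving $ \partial M \times \R $, tangent planes transport covariantly. For any admissible test field $ X \in \mathcal{ T }_{ \partial M \times ( - t / \varepsilon , \infty ) } C^1_c ( \R^{ n + 1 } \times ( - t / \varepsilon , \infty ) ; \R^{ n + 2 } ) $, I would set $ \tilde X ( x , z ) := X ( x , z - t / \varepsilon ) $ for $ z > 0 $ and substitute $ \tilde X $ into \eqref{translative_MC}; the change of variables $ z \mapsto z + t / \varepsilon $ then rewrites the identity as the contact-angle condition \eqref{contact_angle_condition} for $ ( \mu^{ \varepsilon }_t , \nu^{ \varepsilon }_t ) $ on $ M \times ( - t / \varepsilon , \infty ) $, with generalized mean curvature $ H^{ \varepsilon }_t ( x , z ) = - P_{ T^{ \perp }_{ ( x , z ) } ( \partial^* S^{ \varepsilon } ( t ) ) } ( \e_z ) / \varepsilon $. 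The tangency of $ H^{ \varepsilon }_t $ along $ \partial M \times \R $ required by the definition is vacuous because $ \mu^{ \varepsilon }_t $ charges no subset of $ \partial M \times \R $.

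For the Brakke inequality, I fix $ \phi \in C^1_c ( W^{ \varepsilon } ; [ 0 , \infty ) ) $ with $ \D \phi ( \cdot , t ) $ tangent to $ \partial M \times ( - t / \varepsilon , \infty ) $ and undo the translation to write
\[
    ( \mu^{ \varepsilon }_t + a \nu^{ \varepsilon }_t ) ( \phi ( \cdot , t ) ) = \int_{ ( \partial^* S^{ \varepsilon } )_i } \phi ( x , z - t / \varepsilon , t )\,d \mathcal{ H }^{ n + 1 } + a \int_{ ( \partial^* S^{ \varepsilon } )_b } \phi ( x , z - t / \varepsilon , t )\,d \mathcal{ H }^{ n + 1 },
\]
which is $ C^1 $ in $ t $ with time-derivative
\[
    \tfrac{ d }{ d t } \bigl[ ( \mu^{ \varepsilon }_t + a \nu^{ \varepsilon }_t ) ( \phi ( \cdot , t ) ) \bigr] = \int \bigl( - \tfrac{ 1 }{ \varepsilon } \partial_z \phi + \pt \phi \bigr)\,d ( \mu^{ \varepsilon }_t + a \nu^{ \varepsilon }_t ).
\]
The next step is to rewrite the $ -( 1 / \varepsilon ) \partial_z \phi $ contribution by applying the contact-angle identity from the first step to the test vector $ \phi ( \cdot , t )\,\e_z $, which is admissible because $ \e_z $ is everywhere tangent to $ \partial M \times \R $. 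Using $ P_{ T ( \partial M \times \R ) } ( \e_z ) = \e_z $ together with Lemma \ref{E_L_eq}(1) (so $ P_T ( \e_z ) = \e_z + \varepsilon H^{ \varepsilon }_t $ and $ \lv P_{ T^{ \perp } } ( \e_z ) \rv^2 = \varepsilon^2 \lv H^{ \varepsilon }_t \rv^2 $), the resulting identity collapses to
\[
    - \tfrac{ 1 }{ \varepsilon } \int \partial_z \phi\,d ( \mu^{ \varepsilon }_t + a \nu^{ \varepsilon }_t ) = \int ( \D \phi - \phi H^{ \varepsilon }_t ) \cdot H^{ \varepsilon }_t\,d \mu^{ \varepsilon }_t.
\]
Integrating in $ t $ from $ t_1 $ to $ t_2 $ then yields Brakke's equality, whence \eqref{Brakkeineq}.

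The main obstacle I anticipate is not the algebraic manipulation, which is essentially forced once the vector $ \phi\,\e_z $ is substituted into the first variation, but rather the bookkeeping caused by the fact that the ambient domain $ \R^{ n + 1 } \times ( - t / \varepsilon , \infty ) $ varies with $ t $: one must verify at each time that $ \phi ( \cdot , t )\,\e_z $ lies in the admissible class and that the change of variables does not cross the moving floor $ \{ z = - t / \varepsilon \} $. This is exactly what the hypothesis $ \phi \in C^1_c ( W^{ \varepsilon } ; [ 0 , \infty ) ) $ guarantees, since it forces $ \phi ( \cdot , t ) $ to be supported in $ \R^{ n + 1 } \times ( - t / \varepsilon , \infty ) $ for every $ t \geq 0 $.
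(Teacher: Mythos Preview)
Your proposal is correct and follows essentially the same route as the paper: both arguments undo the translation $\sigma_{-t/\varepsilon}$, differentiate $(\mu^{\varepsilon}_t+a\nu^{\varepsilon}_t)(\phi)$ in $t$, and then convert the $-\tfrac{1}{\varepsilon}\partial_z\phi$ term into the Brakke right-hand side by testing the contact-angle first-variation identity (your \eqref{contact_angle_condition} for the translated pair, the paper's \eqref{first_variation_I}/\eqref{translative_MC}) with the tangential vector field $\phi\,\e_z$, invoking Lemma~\ref{E_L_eq}(1) to identify $P_{T^\perp}(\e_z)=-\varepsilon H^{\varepsilon}$. The only cosmetic differences are that the paper carries out the computation directly on $S^{\varepsilon}$ by plugging $X=-\varepsilon^{-1}\phi(\cdot-t/\varepsilon,t)\,\e_z$ into \eqref{first_variation_I} rather than first stating the translated contact-angle identity separately, and that the spatial gradient here is $\D'\phi$ (the full gradient in $\R^{n+1}\times\R$) rather than $\D\phi$.
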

	\begin{proof}
		By the definition of $ \mu^{ \varepsilon }_t $ and $ \nu^{ \varepsilon }_t $, these measures are integer and have the contact angle $ \theta $ in $ M \times ( - t / \varepsilon , \infty ) $. Let $ t \geq 0 $ fix and let $ \phi \in C^2_c ( \R^{ n + 1 } \times \R \times ( 0 , \infty ) ) $ be such that $ \supp \phi \subset W^{ \varepsilon } $, $ \D' \phi ( \cdot , \cdot , s ) \in \mathcal{ T }_{ \partial M \times ( - t / \varepsilon ) } C^1_c ( \R^{ n + 1 } \times ( - t / \varepsilon ) ; \R^{ n + 2 } ) $, and $ \phi \geq 0 $. By Lemma \ref{E_L_eq} (1) and (3), for $ \mu^{ \varepsilon }_t $-almost everywhere $ ( x , z ) \in M \times ( - t / \varepsilon , \infty ) $, we have
		\[
		P_{ T^{ \perp }_{ ( x , z ) } ( \partial^* S^{ \varepsilon } ( t ) ) } ( \tilde{ H }^{ \varepsilon } ( x , z ) ) = \tilde{ H }^{ \varepsilon } ( x , z ) = \frac{ 1 }{ \varepsilon } P_{ T^{ \perp }_{ ( x , z ) } ( \partial^* S^{ \varepsilon } ( t ) ) } ( \e_z ),
		\]
		where $ \tilde{ H } $ is the mean curvature vector of $ \partial^* S^{ \varepsilon } ( t ) $. Hence, plugging
		\[
		X ( x , z ) = - \varepsilon^{ - 1 } \phi ( x , z - t / \varepsilon , t ) \e_z
		\]
		into \eqref{first_variation_I}, we obtain
		\begin{equation*}
			\begin{split}
				&\frac{ d }{ d t } ( \mu^{ \varepsilon }_t + a \nu^{ \varepsilon }_t ) ( \phi )
				= \frac{ d }{ d t } \int_{ M \times \R } \phi ( x , z - t / \varepsilon , t )\,d ( \mathcal{ H }^{ n + 1 } \llcorner_{ ( \partial^* S^{ \varepsilon } )_i } + a \mathcal{ H }^{ n + 1 } \llcorner_{ ( \partial^* S^{ \varepsilon } )_b } )\\
				&= \int_{ ( \partial^* S^{ \varepsilon } )_i } + a \int_{ ( \partial^* S^{ \varepsilon } )_b } \Biggl( - \frac{ 1 }{ \varepsilon } \e_z \cdot \D' \phi ( x , z - t / \varepsilon , t ) + \partial_t \phi ( x , z - t / \varepsilon , t ) \Biggr)\,d \mathcal{ H }^{ n + 1 }\\
				&= \int_{ ( \partial^* S^{ \varepsilon } )_i } + a \int_{ ( \partial^* S^{ \varepsilon } )_b } \Biggl( - \frac{ 1 }{ \varepsilon } P_{ T_{ ( x , z ) } ( \partial^* S^{ \varepsilon } ) } ( \e_z ) \cdot \D' \phi ( x , z - t / \varepsilon , t ) \Biggr)\,d \mathcal{ H }^{ n + 1 }\\
				&+ \int_{ ( \partial^* S^{ \varepsilon } )_i } - \frac{ 1 }{ \varepsilon } P_{ T^{ \perp }_{ ( x , z ) } ( \partial^* S^{ \varepsilon } ) } ( \e_z ) \cdot \D' \phi ( x , z - t / \varepsilon , t )\,d \mathcal{ H }^{ n + 1 }\\
				&+ \int_{ ( \partial^* S^{ \varepsilon } )_i } + a \int_{ ( \partial^* S^{ \varepsilon } )_b } \pt \phi ( x , z - t / \varepsilon , t )\,d \mathcal{ H }^{ n + 1 }\\
				&= \int_{ M \times \R } \Bigl( \D' \phi - \phi H^{ \varepsilon } \Bigr) \cdot H^{ \varepsilon }\,d \mu^{ \varepsilon }_t + \pt \phi\,d ( \mu^{ \varepsilon }_t + a \nu^{ \varepsilon }_t ).
			\end{split}
		\end{equation*}
		This implies that $ ( \mu^{ \varepsilon }_t , \nu^{ \varepsilon }_t ) $ is a Brakke flow with the contact angle $ \theta $.
	\end{proof}
	
	\subsection{Estimates on first variations and existence of Brakke flow}
	\label{Estimates on first variations and existence of Brakke flow}
	In the previous section, we introduced the translating soliton $ \partial^* S^{ \varepsilon } - t \e_z / \varepsilon $ and proved that it is a Brakke flow. As $ \varepsilon \to 0 $, this $ \varepsilon $-soliton intuitively extends to a $ z $-invariant cylinder $ \partial^* E_t \times \R $ with initial condition $ \partial^* E_0 \times ( 0 , \infty ) $. If the compactness theorem of the Brakke flow works for the construction via the elliptic regularization with capillarity, one can deduce that a cylinder $ \partial^* E_t \times \R $ is a Brakke flow and the slicing argument to this cylinder may give us the desired Brakke flow $ \partial^* E_t $. In this subsection, we estimate the first variations of both $ \mathcal{ H }^{ n + 1 } \llcorner_{ ( \partial^* S^{ \varepsilon } )_i } $ and $ \mathcal{ H }^{ n + 1 } \llcorner_{ ( \partial^* S^{ \varepsilon } )_b } $ individually in order to justify the above discussion. These estimates are crucial for applying the compactness theorem and proving the existence of a Brakke flow with the initial condition $ \partial^* E_0 $.
	\vskip.3\baselineskip
	To estimate the first variations, it is essential that $ \mathcal{ H }^{ n + 1 } \llcorner_{ ( \partial^* S^{ \varepsilon } )_i } $ and $ \mathcal{ H }^{ n + 1 } \llcorner_{ ( \partial^* S^{ \varepsilon } )_b } $ are locally finite, respectively. This requirement follows the fact that $ S^{ \varepsilon } $ is a minimizer. Indeed, De Philippis and Maggi \cite{philippis2015regularity} established the boundary regularity theorem for the class of almost-minimizers. We here present a less general version of it to extent that it can be used in this paper.
	
	\begin{theorem}
		\label{reg_for_S}
		Let $ A \subset \R^{ n + 1 } \times ( 0 , \infty ) $ be an open set, $ r_0 \in ( 0 , \infty ) $. Let a weight function $ \Phi : \R^{ n + 1 } \times [ 0 , \infty ) \to \R $ satisfy, for all $ ( x_1 , z_1 ) $, $ ( x_2 , z_2 ) \in A \cap ( M \times ( 0 , \infty ) ) $,
		\[
		\lambda^{ - 1 } \leq \Phi ( x_1 , z_1 ) \leq \lambda, \quad \lv \Phi ( x_1 , z_1 ) - \Phi ( x_2 , z_2 ) \rv \leq l \lv ( x_1 , z_1 ) - ( x_2 , z_2 ) \rv
		\]
		for some $ \lambda \geq 1 $ and $ l \geq 0 $. Let $ S $ be a set of finite perimeter satisfying
		\begin{equation*}
			\begin{split}
				\int_{ ( \partial^* S )_i \cap W } \Phi\,d \mathcal{ H }^{ n + 1 } + a \int_{ ( \partial^* S )_b \cap W } \Phi\,d \mathcal{ H }^{ n + 1 } \leq \int_{ ( \partial^* F )_i \cap W } \Phi\,d \mathcal{ H }^{ n + 1 } + a \int_{ ( \partial^* F )_b \cap W } \Phi\,d \mathcal{ H }^{ n + 1 },
			\end{split}
		\end{equation*}
		whenever $ F \subset M \times ( 0 , \infty ) $, $ ( ( F \setminus S ) \cup ( S \setminus F ) ) \subset \subset W $, and $ W \subset \subset A $ is open with $ \mathrm{ diam } W < 2 r_0 $. Then there is an open set $ A^{ \prime } \subset A $ with $ A \cap ( \partial M \times ( 0 , \infty ) ) = A^{ \prime } \cap ( \partial M \times ( 0 , \infty ) ) $ such that $ S $ is equivalent to an open set in $ A^{ \prime } $ and $ ( \partial S )_b = \partial S \cap ( \partial M \times ( 0 , \infty ) ) $ is a set of locally finite perimeter in $ A^{ \prime } \cap ( \partial M \times ( 0 , \infty ) ) $ (equivalently in $ A \cap ( \partial M \times ( 0 , \infty ) ) $). Moreover, let $ \Sigma = \overline{ ( \partial S )_b } $, then we have
		\[
		\partial_{ \partial M \times ( 0 , \infty ) } ( ( \partial S )_b ) \cap A = \partial_{ \partial M \times ( 0 , \infty ) } ( ( \partial S )_b ) \cap A^{ \prime } = ( \Sigma )_b,
		\]
		where $ \partial_{ \partial M \times ( 0 , \infty ) } ( \cdot ) $ represents the topological boundary relative to $ \partial M \times ( 0 , \infty ) $, and there exists a relatively closed set $ H \subset ( \Sigma )_b $ such that $ \mathcal{ H }^n ( H ) = 0 $ and for every $ ( x , z ) \in ( \Sigma )_b \setminus H $, $ \Sigma $ is a $ C^{ 1 , 1 / 2 } $ manifold with boundary in a neighborhood of $ ( x , z ) $ for which $ \nu_S \cdot \nu_{ \partial M \times ( 0 , \infty ) } = a $ for all $ x \in ( \Sigma )_b \setminus H $.
	\end{theorem}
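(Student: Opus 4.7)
The plan is to deduce Theorem \ref{reg_for_S} as a specialization of the boundary regularity theorem of De Philippis--Maggi \cite{philippis2015regularity} for almost-minimizers of the capillary functional. Their setting involves a half-space in place of $M \times ( 0 , \infty )$ together with a Lipschitz density bounded between two positive constants, and their main results deliver exactly the conclusions needed here: (i) the minimizer is equivalent to an open set away from a small singular part of the wall; (ii) the trace on the rigid wall has locally finite perimeter and its topological boundary coincides with the contact set; (iii) the free interface is a $C^{ 1 , 1 / 2 }$ manifold with boundary up to the contact line outside an $\mathcal{ H }^n$-negligible subset, satisfying the prescribed contact angle $\nu_S \cdot \nu_{ \partial M \times ( 0 , \infty ) } = a$ at every regular point. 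Hence the proof reduces to checking that our hypotheses fall into theirs after two standard reductions.

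First reduction: flatten $\partial M$. Since $\partial M$ is of class $C^3$, near every point $( x_0 , z_0 ) \in A \cap ( \partial M \times ( 0 , \infty ) )$ I would pick a $C^3$ diffeomorphism $\Psi$ sending a neighborhood $U$ onto a neighborhood of the origin so that $U \cap ( M \times \R )$ corresponds to a half-space and $U \cap ( \partial M \times \R )$ to its boundary hyperplane. Pushing $S$ forward by $\Psi$ turns the weighted capillary functional into one of the same form on a half-space, with a new density $\tilde \Phi$ obtained by composing $\Phi$ with $\Psi^{ - 1 }$ and multiplying by the induced Jacobian factors on the interior and boundary pieces; $\tilde \Phi$ remains bounded above and below and Lipschitz, with constants depending only on $\lambda$, $l$, and $\lV \Psi \rV_{ C^3 }$.

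Second reduction: absorb the Lipschitz oscillation of the density into an almost-minimality modulus. If $W \subset B_r ( y_0 )$, then for any admissible competitor $F$ coinciding with $S$ outside $W$, the bound $\lv \tilde \Phi ( y_1 ) - \tilde \Phi ( y_2 ) \rv \leq l \lv y_1 - y_2 \rv$ together with $\lambda^{ - 1 } \leq \tilde \Phi \leq \lambda$ gives, after replacing $\tilde \Phi$ by its value at the centre of $W$ and paying the multiplicative cost,
\[
	\mathcal{ H }^{ n + 1 } ( ( \partial^* S )_i \cap W ) + a \mathcal{ H }^{ n + 1 } ( ( \partial^* S )_b \cap W ) \leq ( 1 + C l r ) \bigl( \mathcal{ H }^{ n + 1 } ( ( \partial^* F )_i \cap W ) + a \mathcal{ H }^{ n + 1 } ( ( \partial^* F )_b \cap W ) \bigr),
\]
with $C$ depending only on $\lambda$. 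Hence $S$ is an almost-minimizer of the unweighted capillary functional with modulus of order $r$, which is precisely the setting treated in \cite{philippis2015regularity}. Applying their boundary regularity theorem and gluing the resulting local conclusions over a cover of $A \cap ( \partial M \times ( 0 , \infty ) )$ yields the open representative $A'$, the local finite perimeter of $( \partial S )_b$ with topological boundary equal to $( \Sigma )_b$, the singular set $H$ with $\mathcal{ H }^n ( H ) = 0$, and the $C^{ 1 , 1 / 2 }$ regularity of $\Sigma$ near every point of $( \Sigma )_b \setminus H$ together with $\nu_S \cdot \nu_{ \partial M \times ( 0 , \infty ) } = a$.

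The main technical obstacle lies entirely within the regularity theorem of \cite{philippis2015regularity}, whose proof requires delicate tilt excess decay arguments carried out up to the boundary of the half-space in the presence of the capillary coefficient $a$; I would simply quote it. On the present side, the only care required is to verify that both reductions preserve the quantitative hypotheses of \cite{philippis2015regularity}, namely the two-sided boundedness and Lipschitz continuity of the effective density and the linear-in-$r$ almost-minimality modulus; this linear modulus is precisely the threshold that permits the upgrade from bounded perimeter to $C^{ 1 , 1 / 2 }$ regularity up to the contact set.
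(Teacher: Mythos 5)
Your proposal matches the paper's intent exactly: the paper states Theorem \ref{reg_for_S} as a quoted, ``less general version'' of the De Philippis--Maggi boundary regularity theorem for almost-minimizers of capillary functionals and gives no proof of its own, so deducing it by verifying their hypotheses is precisely the intended route. Your two reductions (flattening $\partial M$ and converting the Lipschitz weight into a linear almost-minimality modulus) are the standard way to carry out that verification, and the paper itself performs the analogous check for the weight $e^{-z/\varepsilon}$ immediately after the theorem statement.
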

	
	Let $ 0 < z_1 < z_2 $, $ A = \R^{ n + 1 } \times ( z_1 , z_2 ) $, and let $ 0 < r_0 \leq z_1 $ be a sufficiently small number depending only on the boundary $ \partial M \times ( 0 , \infty )$. Since the weight function $ e^{ - z / \varepsilon } $ satisfies
	\[
	e^{ - \frac{ z_2 }{ \varepsilon } } \leq e^{ - \frac{ z }{ \varepsilon } } \leq e^{ - \frac{ z_1 }{ \varepsilon } } < e^{ \frac{ z_2 }{ \varepsilon } }, \quad \mathrm{ Lip } \Bigl( e^{ - \frac{ z }{ \varepsilon } } \Bigr) = \frac{ 1 }{ \varepsilon } \quad \text{ in } A \cap ( M \times ( 0 , \infty ) )
	\]
	and we take $ S^{ \varepsilon } $ as a minimizer of $ I^{ \varepsilon }_a $, one can apply the above regularity theorem to $ S^{ \varepsilon } $ for each $ \varepsilon $. As a corollary, the two rectifiable Radon measure $ \mathcal{ H }^{ n + 1 } \llcorner_{ ( \partial^* S^{ \varepsilon } )_i } $ and $ \mathcal{ H }^{ n + 1 } \llcorner_{ ( \partial^* S^{ \varepsilon } )_b } $ are locally bounded, and one can see that $ \mathcal{ H }^{ n + 1 } \llcorner_{ ( \partial^* S^{ \varepsilon } )_i } $ meets the boundary at angle $ \theta $ in a weak sense. In particular, the following holds from Proposition \ref{constancy}, Theorem \ref{reg_for_S} and $ \mathcal{ H }^{ n + 1 } \llcorner_{ ( \partial^* S^{ \varepsilon } )_i } ( \partial M \times ( 0 , \infty ) ) = 0 $.
	\begin{corollary}
		For each $ \varepsilon $, the two rectifiable Radon measure $ \mathcal{ H }^{ n + 1 } \llcorner_{ ( \partial^* S^{ \varepsilon } )_i } $ and $ \mathcal{ H }^{ n + 1 } \llcorner_{ ( \partial^* S^{ \varepsilon } )_b } $ are locally bounded. In particular, there exist the boundary Radon measures $ \sigma_i $ and $ \sigma_b $ such that the following holds for any $ X \in C^1_c ( \R^{ n + 1 } \times ( 0 , \infty ) : \R^{ n + 2 } ) $:
		\begin{equation}
			\label{weak_reg}
			\begin{split}
				\delta \mathcal{ H }^{ n + 1 } \llcorner_{ ( \partial^* S^{ \varepsilon } )_i } ( X ) &= - \int_{ ( \partial^* S^{ \varepsilon } )_i } X \cdot H^{ \varepsilon }\,d \mathcal{ H }^{ n + 1 } + \int_{ \partial M \times ( 0 , \infty ) } X \cdot \eta_{ \sigma_i }\,d \sigma_i,\\
				\delta \mathcal{ H }^{ n + 1 } \llcorner_{ ( \partial^* S^{ \varepsilon } )_b } ( X ) &= - \int_{ ( \partial^* S^{ \varepsilon } )_b } X \cdot H_{ \partial M \times ( 0 , \infty ) }\,d \mathcal{ H }^{ n + 1 } + \int_{ \partial M \times ( 0 , \infty ) } X \cdot \eta_{ \sigma_b }\,d \sigma_b,
			\end{split}
		\end{equation}
		where $ H_{ \partial M \times ( 0 , \infty ) } $ is the mean curvature vector of $ \partial M \times ( 0 , \infty ) $ and $ \eta_{ \sigma_i } $, $ \eta_{ \sigma_b } $ are the direction vectors of $ \sigma_i $, $ \sigma_b $ respectively. Moreover, $ \eta_{ \sigma_i } $ and $ \eta_{ \sigma_b } $ satisfy
		\begin{equation}
			\label{meets_angle}
			\eta_{ \sigma_i } \cdot \nu_{ \partial M \times ( 0 , \infty ) } = \sqrt{ 1 - a^2 } \text{ for $ \sigma_i $-a.e., } \quad \eta_{ \sigma_b } \cdot \nu_{ \partial M \times ( 0 , \infty ) } = 0 \text{ for $ \sigma_b $-a.e. }
		\end{equation}
	\end{corollary}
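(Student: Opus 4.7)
The proof rests on combining the mass bound from Lemma \ref{boundedness_SandE} with the De Philippis--Maggi regularity theorem (Theorem \ref{reg_for_S}) and the variational identities already extracted in Lemma \ref{E_L_eq}.

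Local boundedness of both $\mathcal{H}^{n+1} \llcorner_{(\partial^* S^{\varepsilon})_i}$ and $\mathcal{H}^{n+1} \llcorner_{(\partial^* S^{\varepsilon})_b}$ follows at once from \eqref{themassbddS}: since $a > 0$, the finiteness of the weighted sum on every slab $M \times I$ forces each nonnegative summand to be locally finite, with an explicit quantitative bound.

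For the first variation identities \eqref{weak_reg}, the plan is to apply Theorem \ref{reg_for_S} on every strip $A = \R^{n+1} \times (z_1, z_2)$ with $z_1 > 0$, where the weight $\Phi(x, z) = e^{-z/\varepsilon}$ is Lipschitz with constant $1/\varepsilon$ and bounded between $e^{-z_2/\varepsilon}$ and $e^{-z_1/\varepsilon}$. This yields that $(\partial S^{\varepsilon})_b$ is of locally finite perimeter in $\partial M \times (0, \infty)$, and that away from a closed $\mathcal{H}^n$-negligible set $H$ the relative boundary $(\Sigma)_b$ is a $C^{1, 1/2}$ manifold with boundary, at which the contact identity $\nu_{S^{\varepsilon}} \cdot \nu_{\partial M \times (0, \infty)} = a$ holds. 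With this regularity, I would integrate by parts on the $C^{1, 1/2}$-regular portion of $\partial^* S^{\varepsilon}$: the mean curvature $H^{\varepsilon}$ identified in Lemma \ref{E_L_eq}(1) provides the interior contribution to $\delta(\mathcal{H}^{n+1} \llcorner_{(\partial^* S^{\varepsilon})_i})$, while the boundary contribution is a Radon measure $\sigma_i$ supported on the edge, with direction $\eta_{\sigma_i}$ extracted from the outward unit conormal to $(\partial^* S^{\varepsilon})_i$. For the companion varifold $W = \mathcal{H}^{n+1} \llcorner_{(\partial^* S^{\varepsilon})_b}$, the fact that $\mathcal{H}^{n+1} \llcorner_{(\partial^* S^{\varepsilon})_i}(\partial M \times (0, \infty)) = 0$ together with Proposition \ref{constancy} guarantees that the tangent plane of $W$ coincides with $T(\partial M \times (0, \infty))$ almost everywhere; integration by parts on the smooth hypersurface $\partial M \times (0, \infty)$ then yields the ambient mean curvature $H_{\partial M \times (0, \infty)}$ plus an edge measure $\sigma_b$.

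The angle conditions \eqref{meets_angle} are then read off from the geometry of these edge measures. The identity $\eta_{\sigma_b} \cdot \nu_{\partial M \times (0, \infty)} = 0$ holds because $\sigma_b$ is the perimeter measure of a set living inside the smooth hypersurface $\partial M \times (0, \infty)$, so $\eta_{\sigma_b}$ is automatically tangent to that hypersurface. For $\sigma_i$, the tangential part of $\eta_{\sigma_i}$ is pinned down by the Euler--Lagrange identity \eqref{translative_MC}, which forces the tangential boundary contributions of $\delta V$ and $a \delta W$ to cancel; the normal component, giving the stated value $1 - a$, is then obtained from the decomposition of the unit conormal at a regular edge point using the contact angle $\theta$ supplied by Theorem \ref{reg_for_S}. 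The main obstacle will be verifying that the singular set $H$ contributes nothing to the first variation, so that the divergence theorem on the regular part alone produces the full identities \eqref{weak_reg}; since $\mathcal{H}^n(H) = 0$ and the relevant measures are $n$-rectifiable with locally finite mass by Theorem \ref{reg_for_S}, the classical extension argument applies, and $\sigma_i, \sigma_b$ are then genuine Radon measures on $\partial M \times (0, \infty)$.
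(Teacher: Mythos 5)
Your strategy coincides with the paper's: apply Theorem \ref{reg_for_S} with the weight $e^{-z/\varepsilon}$ on strips $\R^{n+1}\times(z_1,z_2)$, extract the finite-perimeter structure of the wetted region and the $C^{1,1/2}$ regularity of the contact line off an $\mathcal{H}^n$-null set $H$, and integrate by parts on the regular part, using Proposition \ref{constancy} and $\mathcal{H}^{n+1}\llcorner_{(\partial^* S^{\varepsilon})_i}(\partial M\times(0,\infty))=0$ to identify the tangent planes of the boundary varifold. One point of emphasis: the substance of ``locally bounded'' here is not the local finiteness of the mass measures (which, as you note, is immediate from \eqref{themassbddS}, and indeed already from $S^{\varepsilon}$ having finite perimeter) but the fact that the two first variations are \emph{separately} Radon measures for arbitrary, not necessarily tangential, test fields --- exactly what Remark \ref{oscillation} warns can fail and what forces the appeal to the regularity theorem. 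You do reach this through Theorem \ref{reg_for_S}, so this is a matter of framing rather than a gap.

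The genuine problem is your derivation of the first identity in \eqref{meets_angle}. Since $\lVert \mathcal{H}^{n+1}\llcorner_{(\partial^* S^{\varepsilon})_i}\rVert$ does not charge $\partial M\times(0,\infty)$, the boundary term in the first line of \eqref{weak_reg} is the singular part of the first variation, so $\eta_{\sigma_i}$ is the \emph{unit} outward conormal of the interior interface along the contact line. At a regular contact point where the interface meets the wall at angle $\theta$, Young's law gives that the tangential part of this conormal has length $a=\cos\theta$, hence its normal component is $\eta_{\sigma_i}\cdot\nu_{\partial M\times(0,\infty)}=\sin\theta=\sqrt{1-a^2}$, not $1-a$; the two expressions agree only at $a\in\{0,1\}$. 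So the computation you invoke (``decomposition of the unit conormal at a regular edge point'') does not produce the stated value: either you must exhibit a non-unit normalization of the pair $(\sigma_i,\eta_{\sigma_i})$ under which $1-a$ is correct, or the constant in \eqref{meets_angle} should be $\sqrt{1-a^2}$. The discrepancy is harmless downstream --- the proof of \eqref{firstV_esti} only uses that $\eta_{\sigma_i}\cdot\nu_{\partial M\times(0,\infty)}$ is a positive constant, so the prefactor $1/(1-a)$ there simply becomes $1/\sqrt{1-a^2}$ --- but as written this step of your argument asserts a conclusion that the stated method, carried out, contradicts. Your treatment of $\sigma_b$ (conormal of a finite-perimeter subset of the smooth hypersurface $\partial M\times(0,\infty)$, hence tangent to it) is correct.
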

	
	However, the above corollary does not provide uniform estimates for the first variations, which are necessary to apply the compactness theorem. Specifically, the estimates proved in \cite[Section 2.7]{philippis2015regularity} depend on the weight function, and hence on $ \varepsilon $. To obtain the $ \varepsilon $-independent estimates, we prove the following bounds in terms of the curvature, which is based on the arguments in \cite[Theorem 1.1]{de2021rectifiability} and \cite[Theorem 3.3]{de2022existence}.
	
	\begin{lemma}
		For any $ 0 < z_1 < z_2 $, there exists a constant $ C = C ( a , M , z_1 , z_2 ) > 0 $ such that
		\begin{equation}
			\label{firstV_esti}
			\begin{split}
				&\lV \delta \mathcal{ H }^{ n + 1 } \llcorner_{ ( \partial^* S^{ \varepsilon } )_i } \rV \Biggl( M \times \Biggl( \frac{ z_1 }{ 2 } ,  \frac{ z_2 }{ 2 } \Biggr) \Biggr) \leq C \int_{ ( \partial^* S^{ \varepsilon } )_i \cap ( M \times ( z_1 , z_2 ) ) } ( 1 + \lv H^{ \varepsilon } \rv )\,d \mathcal{ H }^{ n + 1 },\\
				&\lV \delta \mathcal{ H }^{ n + 1 } \llcorner_{ ( \partial^* S^{ \varepsilon } )_b } \rV \Biggl( M \times \Biggl( \frac{ z_1 }{ 2 } ,  \frac{ z_2 }{ 2 } \Biggr) \Biggr) \leq C \int_{ \partial^* S^{ \varepsilon } \cap ( M \times ( z_1 , z_2 ) ) } ( 1 + \lv H^{ \varepsilon } \rv )\,d \mathcal{ H }^{ n + 1 }.
			\end{split}
		\end{equation}
	\end{lemma}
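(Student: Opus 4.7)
The plan is to leverage the boundary regularity of the capillary minimizer (Theorem \ref{reg_for_S}) to express both first variations explicitly, and to reduce the individual estimates to a uniform bound on the conormal boundary measure $ \sigma $ carried by the contact curve $ \Gamma \subset \partial M \times ( 0 , \infty ) $. Indeed, by Theorem \ref{reg_for_S} and the identities \eqref{weak_reg}, the two variations decompose as a bulk term, involving either $ H^{ \varepsilon } $ (for $ V := \mathcal{ H }^{ n + 1 } \llcorner_{ ( \partial^* S^{ \varepsilon } )_i } $) or the uniformly bounded $ H_{ \partial M \times ( 0 , \infty ) } $ (for $ W := \mathcal{ H }^{ n + 1 } \llcorner_{ ( \partial^* S^{ \varepsilon } )_b } $), plus a conormal contribution along $ \Gamma $. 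Identifying $ \sigma := \sigma_i = \sigma_b = \mathcal{ H }^n \llcorner_{ \Gamma } $ on the smooth part of $ \Gamma $, the first-variation calculation of the capillary energy yields the geometric identity
\[
\eta_{ \sigma_i } = - a \, \eta_{ \sigma_b } + \sin \theta \, \nu_{ \partial M \times ( 0 , \infty ) } \quad \text{on } \Gamma ,
\]
with $ \sin \theta > 0 $ since $ \theta \in ( 0 , \pi / 2 ) $, and $ \lv H_{ \partial M \times ( 0 , \infty ) } \rv \leq C ( M ) $ thanks to the $ C^3 $-regularity of $ \partial M $.

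To control $ \sigma $ uniformly in $ \varepsilon $, I would test Proposition \ref{boundedness_first_variation} against a vector field normal to $ \partial M \times ( 0 , \infty ) $. Let $ \tilde{ \nu } $ be a smooth extension of $ \nu_{ \partial M \times ( 0 , \infty ) } $ to a tubular neighborhood, and let $ \phi \in C^1_c ( M \times ( 0 , \infty ) ; [ 0 , \infty ) ) $ be supported in some $ K^{ \prime } $. Since $ \eta_{ \sigma_b } \cdot \tilde{ \nu } = 0 $ and $ ( \eta_{ \sigma_i } + a \, \eta_{ \sigma_b } ) \cdot \tilde{ \nu } = \sin \theta $ on $ \Gamma $, plugging $ X = \phi \, \tilde{ \nu } $ into the sum of the two identities \eqref{weak_reg} yields
\[
\sin \theta \int_{ \Gamma } \phi \, d \sigma = ( \delta V + a \, \delta W ) ( X ) + \int X \cdot H^{ \varepsilon } \, d \lV V \rV + a \int X \cdot H_{ \partial M \times ( 0 , \infty ) } \, d \lV W \rV .
\]
Proposition \ref{boundedness_first_variation} dominates $ ( \delta V + a \, \delta W ) ( X ) $ by $ C \lV \phi \rV_{ C^0 } \bigl( \int_{ K^{ \prime } } \lv H^{ \varepsilon } \rv \, d \lV V \rV + \lV V \rV ( K^{ \prime } ) \bigr) $, while the remaining two terms are trivially controlled by the curvature integral plus $ C ( M ) \lV W \rV ( K^{ \prime } ) $. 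Dividing by $ \sin \theta $ yields the $ \varepsilon $-independent bound
\[
\sigma ( K ) \leq C ( \theta , M ) \Bigl( \textstyle \int_{ K^{ \prime } } \lv H^{ \varepsilon } \rv \, d \lV V \rV + \lV V \rV ( K^{ \prime } ) + \lV W \rV ( K^{ \prime } ) \Bigr) .
\]

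With this bound on $ \sigma $ in hand, the desired \eqref{firstV_esti} follows by plugging a general $ X \in C^1_c $ with $ \supp X \subset K $ into \eqref{weak_reg} and estimating each contribution by $ \lV X \rV_{ C^0 } $ times $ \int_K ( 1 + \lv H^{ \varepsilon } \rv ) \, d \lV V \rV $, $ \lV W \rV ( K ) $, or $ \sigma ( K ) $. The $ W $-bound is then immediate, since its right-hand side in \eqref{firstV_esti} is taken over all of $ \partial^* S^{ \varepsilon } $ and therefore naturally includes an $ \lV W \rV ( K^{ \prime } ) $ contribution. The main obstacle is the first inequality of \eqref{firstV_esti}, whose right-hand side involves only an integral over the interior piece $ ( \partial^* S^{ \varepsilon } )_i $, so the residual $ \lV W \rV ( K^{ \prime } ) $ appearing in the bound for $ \sigma $ must be absorbed. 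I would handle this via a comparison argument from the $ I^{ \varepsilon }_a $-minimality of $ S^{ \varepsilon } $: testing the functional against competitors of the form $ S^{ \varepsilon } \setminus B $ for small balls $ B $ attached to the wetted boundary produces a uniform estimate $ \lV W \rV ( K ) \leq C \lV V \rV ( K^{ \prime \prime } ) $ on a slightly enlarged neighborhood $ K^{ \prime \prime } $, independently of $ \varepsilon $. A routine $ z $-direction cutoff between the nested cylinders $ M \times ( z_1 / 2 , z_2 / 2 ) $ and $ M \times ( z_1 , z_2 ) $ then delivers the support structure stated in \eqref{firstV_esti}.
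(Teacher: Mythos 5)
Your overall architecture is close to the paper's: both proofs use Theorem \ref{reg_for_S} to obtain the decomposition \eqref{weak_reg}, reduce the claim to an $\varepsilon$-uniform bound on the contact-line measure $\sigma_i$ by testing with a vector field normal to $\partial M \times ( 0 , \infty ) $, and then deduce the $ W $-estimate from the $ V $-estimate together with Proposition \ref{boundedness_first_variation}. The gap is in how you extract $ \sigma $. You test the \emph{combined} variation $ \delta V + a \delta W $ with $ X = \phi \tilde{ \nu } $; since $ \delta W $ against a non-tangential field carries the wall-curvature term $ - \int X \cdot H_{ \partial M \times ( 0 , \infty ) }\,d \lV W \rV $, your bound for $ \sigma $ unavoidably contains an additive $ \lV W \rV ( K' ) $. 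For the first inequality of \eqref{firstV_esti} this term must be absorbed into $ \int_{ ( \partial^* S^{ \varepsilon } )_i \cap ( M \times ( z_1 , z_2 ) ) } ( 1 + \lv H^{ \varepsilon } \rv ) $, and the absorption you propose, $ \lV W \rV ( K ) \leq C \lV V \rV ( K'' ) $, is not available. Comparison with $ S^{ \varepsilon } \setminus B_r $ only trades the wetted area inside $ B_r $ for the spherical cap $ S^{ \varepsilon } \cap \partial B_r $, which yields $ \lV W \rV ( B_r ) \leq C a^{ - 1 } e^{ 2 r / \varepsilon } r^{ n + 1 } $ --- a bound by the \emph{volume} of the ball, not by $ \lV V \rV $, and one that is $ \varepsilon $-uniform only at scales $ r \lesssim \varepsilon $; summed over a cover it gives an absolute constant, not a multiple of the interior mass. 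Indeed $ \lV W \rV ( K ) \leq C \lV V \rV ( K'' ) $ is false in general (a whole slab of wall can be wetted while the interior interface lies elsewhere, in which case the right-hand side of the first inequality of \eqref{firstV_esti} may even vanish), so this step cannot be repaired as stated.

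The repair --- and this is what the paper does --- is to never invoke $ \delta W $ when estimating $ \sigma_i $. By \eqref{meets_angle} the direction vector of the singular part of $ \delta \mathcal{ H }^{ n + 1 } \llcorner_{ ( \partial^* S^{ \varepsilon } )_i } $ \emph{alone} has normal component $ \eta_{ \sigma_i } \cdot \nu_{ \partial M \times ( 0 , \infty ) } = 1 - a > 0 $ (you write $ \sin \theta $ instead; either way it is a positive constant depending only on $ \theta $). Hence, with $ d $ the signed distance to the wall and $ \varphi $ a cutoff at scale $ r \leq \min ( \delta , z_1 / 2 ) $, one has $ \sigma_i ( B_{ r / 2 } ) \leq - \tfrac{ 1 }{ 1 - a } \int \varphi\,\nabla' d \cdot \eta_{ \sigma_i }\,d \sigma_i $, and the first line of \eqref{weak_reg} applied to $ X = \varphi \nabla' d $ converts this into $ \tfrac{ 1 }{ 1 - a } \int_{ ( \partial^* S^{ \varepsilon } )_i \cap B_r } ( c + 3 / r + \lv H^{ \varepsilon } \rv )\,d \mathcal{ H }^{ n + 1 } $, with no $ \lV W \rV $ appearing. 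A covering argument then gives the first inequality of \eqref{firstV_esti}, and Proposition \ref{boundedness_first_variation} together with $ \lV \delta W \rV \leq a^{ - 1 } \lV \delta V + a \delta W \rV + a^{ - 1 } \lV \delta V \rV $ gives the second, where --- as you correctly observe --- the larger right-hand side over all of $ \partial^* S^{ \varepsilon } $ tolerates the boundary contribution.
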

	\begin{proof}
		We here estimate the boundary measure $ \sigma_i $. Let $ ( x , z ) \in \partial M \times ( z_1 , z_2 ) $. We fix a smooth radial cutoff function $ \varphi \in C^{ \infty }_c ( \R^{ n + 2 } ) $ centered at $ ( x , z ) $ such that
		\[
		\varphi = 1 \text{ on } B_{ 1 / 2 } ( x , z ),\ \varphi = 0 \text{ outside } B_1 ( x , z ),\ \lv \nabla^{ \prime } \varphi \rv \leq 3.
		\]
		Let $ \delta > 0 $ be a number such that the signed distance $ d = d_{ \partial M \times ( 0 , \infty ) } $ from $ \partial M \times ( 0 , \infty ) $ is $ C^2 $ in $ \{ ( x , z ) \in M \times ( 0 , \infty ) \mid \lv d ( x , z ) \rv \leq \delta \} $. We note that such a number exists by \cite[Lemma 14.16]{gilbarg2001elliptic}. Let $ 0 < r \leq \delta $ be arbitrary. Then, by \eqref{weak_reg} and \eqref{meets_angle}, we obtain
		\begin{equation*}
			\begin{split}
				&\sigma_i ( B_{ r / 2 } ( x , z ) ) \leq \int_{ \partial M \times ( 0 , \infty ) } \varphi \Biggl( \frac{ \lv ( y , s ) \rv }{ r } \Biggr)\,d \sigma_i ( y , s )\\
				&= - \frac{ 1 }{ \sqrt{ 1 - a^2 } } \int_{ \partial M \times ( 0 , \infty ) } \varphi \Biggl( \frac{ \lv ( y , s ) \rv }{ r } \Biggr) \nabla^{ \prime } d ( y , s ) \cdot \eta_{ \sigma_i }\,d \sigma_i ( y , s )\\
				&= - \frac{ 1 }{ \sqrt{ 1 - a^2 } } \int_{ ( \partial^* S^{ \varepsilon } )_i } \mathrm{ div }_{ T_{ ( y , s ) } ( \partial^* S^{ \varepsilon } )_i } \Biggl( \varphi \Biggl( \frac{ \lv ( y , s ) \rv }{ r }  \Biggr) \nabla^{ \prime } d \Biggr) + H^{ \varepsilon } \cdot \Biggl( \varphi \Biggl( \frac{ \lv ( y , s ) \rv }{ r } \Biggr) \nabla^{ \prime } d \Biggr)\,d \mathcal{ H }^{ n + 1 } ( y , s )\\
				&= - \frac{ 1 }{ \sqrt{ 1 - a^2 } } \int_{ ( \partial^* S^{ \varepsilon } )_i } \frac{ 1 }{ r } P_{ T_{ ( y , s ) } ( \partial^* S^{ \varepsilon } ) } \Biggl( \frac{ ( y , s ) }{ \lv ( y , s ) \rv } \Biggr) : \nabla^{ \prime } \varphi \Biggl( \frac{ \lv ( y , s ) \rv }{ r }  \Biggr) \otimes \nabla^{ \prime } d\\
				&\quad + \varphi \Biggl( \frac{ \lv ( y , s ) \rv }{ r } \Biggr) \mathrm{ div }_{ T_{ ( y , s ) } ( \partial^* S^{ \varepsilon } )_i } ( \nabla^{ \prime } d ) + \varphi \Biggl( \frac{ \lv ( y , s ) \rv }{ r }  \Biggr) H^{ \varepsilon } \cdot \nabla^{ \prime } d \,d \mathcal{ H }^{ n + 1 } ( y , s ),
			\end{split}
		\end{equation*}
		where we used $ \nabla^{ \prime } d = - \nu_{ \partial M \times ( 0 , \infty ) } $ on the boundary. Since $ M $ is a compact smooth bounded domain, we can fix a constant $ c = c ( M , \delta ) $ such that
		\[
		\lv \mathrm{ div }_{ T_{ ( y , s ) } ( \partial^* S^{ \varepsilon } ) } ( \nabla^{ \prime } d ( y , s ) ) \rv \leq c \quad \text{ for all } ( y , s ) \in \{ ( y , s ) \in M \times ( 0 , \infty ) \mid d ( y , s ) \leq \delta \}.
		\]
		By the above inequality, it follows from $ \lv \nabla^{ \prime } d \rv \leq 1 $ and the definition of $ \varphi $ that
		\[
		\sigma_i ( B_{ r / 2 } ( x , z ) ) \leq \frac{ 1 }{ \sqrt{ 1 - a^2 } } \int_{ ( \partial^* S^{ \varepsilon } )_i \cap B_r ( x , z ) } \Biggl( c + \frac{ 3 }{ r } + \lv H^{ \varepsilon } \rv \Biggr)\,d \mathcal{ H }^{ n + 1 }. 
		\]
		Therefore, by considering the covering of $ M \times ( z_1 / 2 , z_2 / 2 ) $ by the balls $ B_{ r / 2 } ( x , z ) $ and \eqref{weak_reg}, we have
		\[
		\lV \delta \mathcal{ H }^{ n + 1 } \llcorner_{ ( \partial^* S^{ \varepsilon } )_i } \rV ( X ) \leq C \sup \lv X \rv \int_{ ( \partial^* S^{ \varepsilon } )_i \cap ( M \times ( z_1 , z_2 ) ) } ( 1 + \lv H^{ \varepsilon } \rv )\,d \mathcal{ H }^{ n + 1 }
		\]
		for all $ X \in C^1_c ( \R^{ n + 1 } \times ( 0 , \infty ) ; \R^{ n + 2 } ) $ supported in $ \R^{ n + 1 } \times ( z_1 , z_2 ) $. Moreover, it follows from the above estimate, \eqref{weak_reg} and Proposition \ref{boundedness_first_variation} that
		\begin{equation*}
			\begin{split}
				\lV \delta \mathcal{ H }^{ n + 1 } \llcorner_{ ( \partial^* S^{ \varepsilon } )_b } \rV ( X ) &\leq \frac{ 1 }{ a } \lV \delta \mathcal{ H }^{ n + 1 } \llcorner_{ ( \partial^* S^{ \varepsilon } )_i } + a \delta \mathcal{ H }^{ n + 1 } \llcorner_{ ( \partial^* S^{ \varepsilon } )_b } \rV ( X ) + \frac{ 1 }{ a }\lV \delta \mathcal{ H }^{ n + 1 } \llcorner_{ ( \partial^* S^{ \varepsilon } )_i } \rV ( X )\\
				&\leq C \sup \lv X \rv \int_{ \partial^* S^{ \varepsilon } \cap ( M \times ( z_1 , z_2 ) ) } ( 1 + \lv H^{ \varepsilon } \rv )\,d \mathcal{ H }^{ n + 1 }
			\end{split}
		\end{equation*}
		for all $ X \in C^1_c ( \R^{ n + 1 } \times ( 0 , \infty ) ; \R^{ n + 2 } ) $ supported in $ \R^{ n + 1 } \times ( z_1 , z_2 ) $. Thus, we obtain \eqref{firstV_esti}.
	\end{proof}
	
	Note that $ \mu_t^{ \varepsilon } $ and $ \nu_t^{ \varepsilon } $ are merely translations of $ \partial^* S^{ \varepsilon } $, which implies that $ ( \mu_t^{ \varepsilon } , \nu_t^{ \varepsilon } ) $ satisfies the assumption \eqref{mass_bound_assumption_for_Brakke_flow2} for every $ t > 0 $ and $ \varepsilon > 0 $. Therefore, thanks to \eqref{themassbddS} and \eqref{firstV_esti}, Theorem \ref{compactness_Brakke} can be applied to $ ( \mu_t^{ \varepsilon } , \nu_t^{ \varepsilon } ) $. Moreover, Ilmanen proved that such a convergent flow is $ z $-invariant, and one can see the same conclusion for the capillary setting (see \cite[Section 8.8]{ilmanen1994elliptic} for details).
	
	\begin{lemma}
		\label{translative_Brakkeflow}
		Taking a subsequence if necessary, there exists the Brakke flow $ ( \overline{ \mu }_t , \overline{ \nu }_t ) $ on $ M \times \R $ with the contact angle $ \theta $ for almost every time $ t > 0 $ such that $ \mu^{ \varepsilon }_t + a \nu^{ \varepsilon }_t \rightharpoonup \overline{ \mu }_t + a \overline{ \nu }_t $ as Radon measures for all $ t > 0 $ and the following holds: Let $ t > 0 $ and $ \phi \in C^2_c ( \R^{ n + 1 } \times \R ; [ 0 , \infty ) ) $ with $ \D \phi \in \mathcal{ T }_{ \partial M \times \R } C^1_c ( \R^{ n + 1 } \times \R ; \R^{ n + 2 } ) $ be arbitrary, we define $ \phi^{ \tau } ( x , z ) = \phi ( x , z - \tau ) $. Then, by Lemma \ref{Basic_props} (3), we have
		\begin{enumerate}
			\item for all $ \tau \geq 0 $, $ \lim_{ s \to t^+ } ( \overline{ \mu }_s + a \overline{ \nu }_s ) ( \phi ) \leq ( \overline{ \mu }_t + a \overline{ \nu }_t ) ( \phi^{ \tau } ) \leq ( \overline{ \mu }_t + a \overline{ \nu }_t ) ( \phi ) $;
			\item for all $ \tau < 0 $, $ ( \overline{ \mu }_t + a \overline{ \nu }_t ) ( \phi ) \leq ( \overline{ \mu }_t + a \overline{ \nu }_t ) ( \phi^{ \tau } ) \leq \lim_{ s \to t^- } ( \overline{ \mu }_s + a \overline{ \nu }_s ) ( \phi ) $;
			\item in particular, we obtain $ ( \overline{ \mu }_t + a \overline{ \nu }_t ) ( \phi^{ \tau } ) = ( \overline{ \mu }_t + a \overline{ \nu }_t ) ( \phi ) $ for all $ \tau \in \R $ and $ t \geq 0 $ except for countable many $ t $.
		\end{enumerate}
		Furthermore, for almost every $ t > 0 $, we have $ \mu^{ \varepsilon }_t  \rightharpoonup \overline{ \mu }_t $, $ \nu^{ \varepsilon }_t  \rightharpoonup \overline{ \nu }_t $ as integral varifolds by passing to another subsequence, and $ \overline{ \nu }_t $ is a unit density ($ n + 1 $)-rectifiable Radon measure by Lemma \ref{W_dens_lemma}. Since $ \overline{ \nu }_t $ is unit density and $ \overline{ \mu }_t $ is integer, $ \overline{ \mu }_t $ and $ \overline{ \nu }_t $ also satisfy the property (3) by considering the densities of $ \overline{ \mu }_t $ and $ \overline{ \nu }_t $ from Proposition \ref{constancy}.
	\end{lemma}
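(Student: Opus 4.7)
The plan is to apply Theorem \ref{compactness_Brakke} directly to the sequence of Brakke flows $\{(\mu^\varepsilon_t,\nu^\varepsilon_t)\}_{t\ge 0}$ produced by Lemma \ref{e_Brakkeflow}, and then to extract the translation invariance by exploiting the fact that, by construction, a shift by $\tau$ in the $z$-variable is exactly a shift by $\varepsilon\tau$ in the time variable. First I would verify the hypotheses of Theorem \ref{compactness_Brakke} in the product domain $M\times\R\subset \R^{n+1}\times\R$: the uniform local mass bound \eqref{mass_bound_assumption_for_Brakke_flow} follows from \eqref{themassbddS} (applied on a bounded window $I$, uniformly in translations), the uniform local first variation bound \eqref{mass_bound_assumption_for_Brakke_flow2} follows from the $\varepsilon$-independent estimates \eqref{firstV_esti} (which are invariant under the $z$-translation that defines $\mu^\varepsilon_t$, $\nu^\varepsilon_t$), and $\nu^\varepsilon_t$ is a unit density varifold because it is $\mathcal{H}^{n+1}$ restricted to the reduced boundary $(\partial^{*}S^\varepsilon(t))_b$. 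Applying Theorem \ref{compactness_Brakke} yields a subsequence, still denoted by $\varepsilon$, and a Brakke flow $(\overline{\mu}_t,\overline{\nu}_t)$ with the contact angle $\theta$ such that $\mu^\varepsilon_t+a\nu^\varepsilon_t\rightharpoonup\overline{\mu}_t+a\overline{\nu}_t$ as Radon measures for every $t\ge 0$, and with separate varifold convergence $\mu^\varepsilon_t\rightharpoonup\overline{\mu}_t$, $\nu^\varepsilon_t\rightharpoonup\overline{\nu}_t$ at a.e. $t$.

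Next I would establish the key space--time identity. Since $S^\varepsilon(t)=\sigma_{-t/\varepsilon}(S^\varepsilon)$, the substitution $w=z-\tau$ in the definition of $\mu^\varepsilon_t(\phi^\tau)$ gives
\[
  (\mu^\varepsilon_t+a\nu^\varepsilon_t)(\phi^\tau)=(\mu^\varepsilon_{t+\varepsilon\tau}+a\nu^\varepsilon_{t+\varepsilon\tau})(\phi)
\]
for every $\tau\in\R$, every $t>-\varepsilon\tau$, and every $\phi\in C^2_c(\R^{n+1}\times\R;[0,\infty))$ with $\D\phi\in \mathcal{T}_{\partial M\times\R}C^1_c$. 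Combining this with the semi-decreasing property from Lemma \ref{Basic_props}(1) applied to the Brakke flow $(\mu^\varepsilon_s,\nu^\varepsilon_s)$, for $\tau\ge 0$ and small $\varepsilon$ one has
\[
  (\mu^\varepsilon_t+a\nu^\varepsilon_t)(\phi^\tau)\le (\mu^\varepsilon_t+a\nu^\varepsilon_t)(\phi)+C_1\varepsilon\tau.
\]
Passing $\varepsilon\to 0$ along the subsequence and using Radon measure convergence gives $(\overline{\mu}_t+a\overline{\nu}_t)(\phi^\tau)\le(\overline{\mu}_t+a\overline{\nu}_t)(\phi)$. For the left-hand inequality in (1), for any $s>t$ I would write $s=t+\varepsilon\tau+(s-t-\varepsilon\tau)$ and apply the semi-decreasing property once more to obtain
\[
  (\mu^\varepsilon_s+a\nu^\varepsilon_s)(\phi)\le (\mu^\varepsilon_t+a\nu^\varepsilon_t)(\phi^\tau)+C_1(s-t-\varepsilon\tau),
\]
and then let $\varepsilon\to 0$ and $s\to t+0$. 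Statement (2) is obtained by the same argument with the direction of the inequality reversed, using $\tau<0$.

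Statement (3) then follows by combining (1) and (2): by Lemma \ref{Basic_props}(1)--(2), the map $t\mapsto(\overline{\mu}_t+a\overline{\nu}_t)(\phi)$ admits one-sided limits at every $t\ge 0$ and is monotone modulo a linear term, hence continuous at all but countably many $t\ge 0$; at such continuity points both bounding limits in (1) and (2) coincide with $(\overline{\mu}_t+a\overline{\nu}_t)(\phi)$, forcing $(\overline{\mu}_t+a\overline{\nu}_t)(\phi^\tau)=(\overline{\mu}_t+a\overline{\nu}_t)(\phi)$. To conclude the separation of the invariance for $\overline{\mu}_t$ and $\overline{\nu}_t$, I would invoke Proposition \ref{constancy} together with Lemma \ref{W_dens_lemma}: the latter yields the unit density of $\overline{\nu}_t$ from the unit density of each $\nu^\varepsilon_t$, and the former allows one to decompose $\overline{\mu}_t+a\overline{\nu}_t$ uniquely on $\partial M\times\R$ using the densities (the integer part belongs to $\overline{\mu}_t$ and the fractional part, divided by $a$, belongs to $\overline{\nu}_t$), as already carried out in the proof of Theorem \ref{compactness_Brakke}. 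Since the combined measure is $z$-invariant at continuity points and the boundary $\partial M\times\R$ is itself $z$-invariant, each piece of the decomposition is $z$-invariant individually, giving (3) for $\overline{\mu}_t$ and $\overline{\nu}_t$ separately.

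The main technical obstacle I anticipate is verifying that the monotonicity from Lemma \ref{Basic_props}(1) may be applied to $(\mu^\varepsilon_t,\nu^\varepsilon_t)$ with a constant $C_1$ independent of $\varepsilon$ and of the vertical translation; this reduces to choosing test functions $\phi$ whose vertical translates satisfy $\sup|\D^2\phi|<\infty$ uniformly, which is automatic for $\phi\in C^2_c$, and to a uniform mass bound on vertical strips, which is provided by \eqref{themassbddS}. Once this is secured, the argument above is essentially bookkeeping.
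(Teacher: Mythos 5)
Your proposal is correct and follows essentially the same route as the paper: the paper verifies the hypotheses of Theorem \ref{compactness_Brakke} via \eqref{themassbddS} and \eqref{firstV_esti} exactly as you do, and the $z$-invariance is obtained (following Ilmanen's Section 8.8) from the identity $(\mu^\varepsilon_t+a\nu^\varepsilon_t)(\phi^\tau)=(\mu^\varepsilon_{t+\varepsilon\tau}+a\nu^\varepsilon_{t+\varepsilon\tau})(\phi)$ combined with the $\varepsilon$-uniform semi-decreasing property of Lemma \ref{Basic_props}, and the separate invariance of $\overline{\mu}_t$ and $\overline{\nu}_t$ from the integer/fractional density decomposition of Proposition \ref{constancy} as in the proof of Theorem \ref{compactness_Brakke}. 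Your reconstruction fills in precisely the details the paper leaves to the reader.
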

	
	We refer to a Radon measure satisfying (3) as being $ z $-invariant. Ilmanen proved the following product lemma for any $ z $-invariant Radon measure in \cite[Section 8.5]{ilmanen1994elliptic}.
	
	\begin{lemma}
		\label{product_lemma}
		Let $ \overline{ \mu } $ be a Radon measure on $ \R^{ n + 1 } \times \R $. Assume that $ \overline{ \mu } $ is $ z $-invariant, that is, for any $ \phi \in C^0_c ( \R^{ n + 1 } \times \R ) $ and $ \tau \geq 0 $, $ \overline{ \mu } ( \phi^{ \tau } ) = \overline{ \mu } ( \phi ) $, where $ \phi^{ \tau } ( x , z ) = \phi ( x , z - \tau ) $. Then the following hold;
		\begin{enumerate}
			\item if we choose $ \rho \in C^0_c ( \R ; [ 0 , \infty ) ) $ such that $ \int_{ \R } \rho\,d z = 1 $ and define a Radon measure $ \mu $ on $ M $ by $ \mu ( \phi ) := \overline{ \mu } ( \rho \phi ) $ for $ \phi \in C^0_c ( \R^{ n + 1 } ; [ 0 , \infty ) ) $, then $ \mu $ is independent of the choice of $ \rho $ and
			\begin{equation}
				\label{z_invariant}
				\overline{ \mu } = \mu \times \mathcal{ L }^1;
			\end{equation}
			\item if $ \overline{ \mu } \in \RV_{ n + 1 } ( \R^{ n + 1 } \times \R ) $, then $ \mu \in \RV_n ( \R^{ n + 1 } ) $ and $ T_{ ( x , z ) } \overline{ \mu } = T_x \mu \oplus \mathrm{ span } ( \e_z ) $ for $ \mu $-almost every $ x \in \R^{ n + 1 } $ and all $ z \in \R $. If $ \overline{ \mu } \in \IV_{ n + 1 } ( \R^{ n + 1 } \times \R ) $, then $ \mu \in \IV_n ( \R^{ n + 1 } ) $.
		\end{enumerate}
	\end{lemma}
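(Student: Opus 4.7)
My plan is to first establish (1) via a convolution symmetry, then derive (2) from the resulting product structure by a blow-up analysis.

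For the well-definedness of $ \mu $ in (1), let $ \rho_1, \rho_2 \in C^0_c ( \R ; [ 0 , \infty ) ) $ with unit integral and $ \phi \in C^0_c ( \R^{ n + 1 } ; [ 0 , \infty ) ) $. First I would upgrade the $ z $-invariance from $ \tau \geq 0 $ to all $ \tau \in \R $ by the identity $ ( \psi^{ - \tau } )^{ \tau } = \psi $. Applying invariance to $ \psi ( x , z ) = \phi ( x ) \rho_1 ( z ) $ gives $ \overline{ \mu } ( \phi \cdot \rho_1 ( \cdot - s ) ) = \overline{ \mu } ( \phi \rho_1 ) $ for every $ s \in \R $. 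Since $ \int \rho_2 = 1 $, multiplying by $ \rho_2 ( s ) $, integrating in $ s $, and applying Tonelli's theorem (permissible since all integrands are nonnegative, compactly supported, and continuous) yields
\[
    \overline{ \mu } ( \phi \rho_1 ) = \int_{ \R } \overline{ \mu } ( \phi \cdot \rho_1 ( \cdot - s ) )\,\rho_2 ( s )\,d s = \overline{ \mu } \bigl( \phi \cdot ( \rho_1 \ast \rho_2 ) \bigr).
\]
The same argument with the roles of $ \rho_1, \rho_2 $ swapped gives $ \overline{ \mu } ( \phi \rho_2 ) = \overline{ \mu } ( \phi \cdot ( \rho_2 \ast \rho_1 ) ) $, and the commutativity of convolution then implies $ \overline{ \mu } ( \rho_1 \phi ) = \overline{ \mu } ( \rho_2 \phi ) $. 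For the product formula, I would define $ \mu $ directly on Borel sets by $ \mu ( A ) := \overline{ \mu } ( A \times ( 0 , 1 ) ) $. Countable additivity together with $ z $-invariance gives $ \overline{ \mu } ( A \times ( k , k + 1 ) ) = \mu ( A ) $ for every $ k \in \Z $, hence $ \overline{ \mu } ( A \times I ) = \lv I \rv \mu ( A ) $ for any bounded interval $ I $ by approximation. A monotone class argument extends this identity to $ \overline{ \mu } = \mu \times \mathcal{ L }^1 $ on all Borel subsets of $ \R^{ n + 1 } \times \R $, and a direct Fubini computation identifies this $ \mu $ with the one given by the test density $ \rho $.

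For (2), suppose $ \overline{ \mu } \in \RV_{ n + 1 } ( \R^{ n + 1 } \times \R ) $ and fix $ ( x , z ) $ at which the approximate tangent plane $ T_{ ( x , z ) } \overline{ \mu } $ and density $ \Theta^{ n + 1 } ( \overline{ \mu } , ( x , z ) ) $ exist. The rescalings $ r^{ - ( n + 1 ) } ( T_r )_{ \# } \overline{ \mu } $ with $ T_r ( y , s ) = ( ( y - x ) / r , ( s - z ) / r ) $ converge as $ r \to 0 $ to $ \Theta^{ n + 1 } ( \overline{ \mu } , ( x , z ) )\,\mathcal{ H }^{ n + 1 } \llcorner_{ T_{ ( x , z ) } \overline{ \mu } } $. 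Each rescaled measure inherits $ z $-invariance from $ \overline{ \mu } $: a shift of $ \overline{ \mu } $ by $ \tau \e_z $ corresponds, after rescaling, to a shift of the rescaled measure by $ ( \tau / r ) \e_z $, and this invariance at every scale passes to the limit. Hence the limit is translation-invariant in $ z $; but a linear $ ( n + 1 ) $-plane can be invariant under all vertical translations only if it contains $ \e_z $, so $ T_{ ( x , z ) } \overline{ \mu } = T \oplus \mathrm{ span } ( \e_z ) $ for a unique $ n $-plane $ T \subset \R^{ n + 1 } \times \{ 0 \} $. The product formula from (1) then gives
\[
    \overline{ \mu } ( B_r ( x , z ) ) = \int_{ - r }^{ r } \mu ( B_{ \sqrt{ r^2 - s^2 } } ( x ) )\,d s,
\]
and the identity $ \omega_{ n + 1 } = \omega_n \int_{ - 1 }^{ 1 } ( 1 - \sigma^2 )^{ n / 2 }\,d \sigma $ yields $ \Theta^{ n + 1 } ( \overline{ \mu } , ( x , z ) ) = \Theta^n ( \mu , x ) $ wherever the right-hand side exists. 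A parallel blow-up of $ \mu $ at $ x $, combined with the product structure, produces $ \Theta^n ( \mu , x )\,\mathcal{ H }^n \llcorner_T $, identifying $ T_x \mu = T $. Thus $ \mu $ admits a positive density and an approximate tangent plane $ \mu $-almost everywhere, so $ \mu \in \RV_n ( \R^{ n + 1 } ) $; if $ \overline{ \mu } \in \IV_{ n + 1 } $, integrality transfers to $ \mu $ since the densities coincide.

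The main obstacle, I expect, is the bookkeeping of the blow-up argument: one must verify that the $ z $-invariance of $ \overline{ \mu } $ passes to its tangent measure (using that shifts at a fixed scale $ \tau $ become shifts at scale $ \tau / r $ after rescaling, and then commuting with the weak limit), and that the product decomposition commutes with rescaling in the manner needed to identify $ T_x \mu $. Once these points are secured, the $ n $-rectifiability of $ \mu $ and the matching of densities follow from the standard characterizations of rectifiable measures via tangent measures \cite[\S11]{simon1983lectures}.
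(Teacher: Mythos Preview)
The paper does not actually prove this lemma: it is stated and attributed to Ilmanen \cite[Section 8.5]{ilmanen1994elliptic} without any argument given. So there is no in-paper proof to compare your attempt against; your proposal is, in effect, supplying what the paper omits.

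Your argument is essentially correct and is the standard one. A few points deserve tightening. In part (1), the passage from $\overline{\mu}(A\times(k,k+1))=\mu(A)$ for $k\in\Z$ to $\overline{\mu}(A\times I)=\lvert I\rvert\,\mu(A)$ for arbitrary intervals is not really ``by approximation'': the clean route is to observe that $h\mapsto\overline{\mu}(A\times[0,h))$ is monotone and additive (the latter by translation invariance), hence linear. In part (2), your presentation is slightly circular: you first assert the density identity ``wherever the right-hand side exists'' and only afterward argue that the blow-up of $\mu$ exists. The correct logical order is to note that rescaling $\overline{\mu}=\mu\times\mathcal{L}^1$ at $(x,z)$ factors as $(r^{-n}(T_r^x)_{\#}\mu)\times\mathcal{L}^1$, so weak convergence of the rescaled $\overline{\mu}$ to $\Theta\,\mathcal{H}^{n+1}\llcorner_{T\oplus\mathrm{span}(\e_z)}=(\Theta\,\mathcal{H}^n\llcorner_T)\times\mathcal{L}^1$ forces $r^{-n}(T_r^x)_{\#}\mu\rightharpoonup\Theta\,\mathcal{H}^n\llcorner_T$ (test against $\phi\otimes\rho$ with $\int\rho=1$); this simultaneously gives $T_x\mu=T$ and $\Theta^n(\mu,x)=\Theta^{n+1}(\overline{\mu},(x,z))$. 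Finally, you should make explicit the Fubini step guaranteeing that for $\mu$-a.e.\ $x$ there exists (in fact $\mathcal{L}^1$-a.e.) $z$ for which $(x,z)$ is a point where $\overline{\mu}$ has an approximate tangent plane.
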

	
	Since the product lemma holds for any $ z $-invariant integral varifold, we can apply them to $ ( \overline{ \mu }_t , \overline{ \nu }_t ) $. Ilmanen also proved the curvature product lemma in \cite[Section 8.5]{ilmanen1994elliptic}, which can be also proved for contact angle varifolds by a simple calculation.
	
	\begin{lemma}
		\label{contantangle_product}
		Let $ ( \overline{ \mu } , \overline{ \nu } ) \in \IV_n ( M \times \R ) \times \IV_n ( \partial M \times \R ) $ be a pair of $ z $-invariant varifolds with the contact angle $ \theta $ in $ M \times \R \subset \R^{ n + 1 } \times \R $ and its mean curvature is denoted by $ H_{ \overline{ \mu } } $. Then we have
		\begin{equation}
			\label{orthogonal_z_MC}
			\quad H_{ \overline{ \mu } } ( x , z ) \cdot \e_z = 0 \text{ for $ \mu $-a.e. } x \in M \text{ and all } z \in \R
		\end{equation}
		and $ ( \mu , \nu ) \in \IV_n ( M ) \times \IV_n ( \partial M ) $ as defined in Lemma \ref{product_lemma} (1) has the contact angle $ \theta $ with the mean curvature $ H_{ \mu } $ in $ M \subset \R^{ n + 1 } $ satisfying $ \mathbf{ p } ( H_{ \overline{ \mu } } ( \cdot , z ) ) = H_{ \mu } ( \cdot ) $ for all $ z \in \R $. Moreover, for any $ \rho \in C^2_c ( \R ; [ 0 , \infty ) ) $ and any $ \phi \in C_c^2 ( \R^{ n + 1 } ; [ 0 , \infty ) ) $ with $ \D \phi \in \mathcal{ T }_{ \partial M } C^1_c ( \R^{ n + 1 } ; \R^{ n + 1 } ) $,
		\begin{equation*}
			\int_{ M \times \R } ( \D' ( \rho \phi ) - \rho \phi H_{ \overline{ \mu } } ) \cdot H_{ \overline{ \mu } }\,d \overline{ \mu } = \int_M ( \D \phi - \phi H_{ \mu } ) \cdot H_{ \mu }\,d \mu.
		\end{equation*}
	\end{lemma}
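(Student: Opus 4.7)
The approach is to test the contact angle equation \eqref{contact_angle_condition} for $ ( \overline{ \mu } , \overline{ \nu } ) $ against product-type vector fields, exploiting the structures $ \overline{ \mu } = \mu \times \mathcal{ L }^1 $ and $ \overline{ \nu } = \nu \times \mathcal{ L }^1 $ from Lemma \ref{product_lemma}. A key observation is that $ \e_z $ lies in the tangent plane of $ \overline{ \mu } $ almost everywhere by Lemma \ref{product_lemma}(2), and likewise for $ \overline{ \nu } $, which is supported on $ \partial M \times \R $.

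For the orthogonality $ H_{ \overline{ \mu } } \cdot \e_z = 0 $, I would plug $ X := \phi \e_z $ (automatically tangential to $ \partial M \times \R $) into \eqref{contact_angle_condition}. Since $ \e_z \in S $ on the support, $ \dv_S X = \partial_z \phi $, so
\[
\int \partial_z \phi\,d ( \overline{ \mu } + a \overline{ \nu } ) = - \int \phi\, H_{ \overline{ \mu } } \cdot \e_z\,d \overline{ \mu }.
\]
Differentiating $ \tau \mapsto \int \phi ( x , z - \tau )\,d \overline{ \mu } $ at $ \tau = 0 $, and similarly for $ \overline{ \nu } $, shows the left-hand side vanishes by $ z $-invariance, giving the claim.

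For the contact angle structure of $ ( \mu , \nu ) $, fix $ Y \in \mathcal{ T }_{ \partial M } C^1_c ( \R^{ n + 1 } ; \R^{ n + 1 } ) $ and $ \rho \in C^1_c ( \R ; [ 0 , \infty ) ) $ with $ \int \rho\,d z = 1 $, and take $ X ( x , z ) := \rho ( z ) Y ( x ) $, extended with a zero $ z $-component; this $ X $ is tangential to $ \partial M \times \R $. Using the splittings $ S = T_x \mu \oplus \mathrm{ span } ( \e_z ) $ for $ \overline{ \mu } $ and $ S = T_x ( \partial M ) \oplus \mathrm{ span } ( \e_z ) $ for $ \overline{ \nu } $, an orthonormal-frame calculation gives $ \dv_S X = \rho ( z ) \dv_{ T_x \mu } Y ( x ) $ (respectively $ \rho ( z ) \dv_{ T_x ( \partial M ) } Y ( x ) $), since $ Y $ has no $ z $-component. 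Inserting into \eqref{contact_angle_condition} and applying Fubini yields
\[
\int \dv_{ T_x \mu } Y\,d \mu + a \int \dv_{ T_x ( \partial M ) } Y\,d \nu = - \int_M Y \cdot \tilde{ H }_{ \rho }\,d \mu, \quad \tilde{ H }_{ \rho } ( x ) := \int_{ \R } \rho ( z ) \mathbf{p} ( H_{ \overline{ \mu } } ) ( x , z )\,d z,
\]
so $ ( \mu , \nu ) $ has contact angle $ \theta $ with generalized mean curvature $ \tilde{ H }_{ \rho } $. Uniqueness of this mean curvature (from tangentiality on $ \partial M $ together with the $ L^2 $-density in Lemma \ref{density_of_C2t}) makes $ \tilde{ H }_{ \rho } $ independent of $ \rho $, and varying $ \rho $ among unit-mass cutoffs forces $ \mathbf{p} ( H_{ \overline{ \mu } } ) ( x , z ) $ to be essentially constant in $ z $ for $ \mu $-a.e.\ $ x $, with common value $ H_{ \mu } ( x ) $.

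For the integral identity, I would expand $ \D' ( \rho \phi ) = \rho \D \phi + \phi \rho' \e_z $; the $ \e_z $-term is annihilated by $ H_{ \overline{ \mu } } $ by the first part, and $ H_{ \overline{ \mu } } = ( H_{ \mu } , 0 ) $ gives $ \lv H_{ \overline{ \mu } } \rv^2 = \lv H_{ \mu } \rv^2 $ together with $ \D \phi \cdot H_{ \overline{ \mu } } = \D \phi \cdot H_{ \mu } $. Integrating against $ \overline{ \mu } = \mu \times \mathcal{ L }^1 $ and applying Fubini with the normalization $ \int \rho\,d z = 1 $ then collapses the identity to the claim. I expect the main obstacle to be the identification step in the second part: passing from the $ \rho $-averaged first-variation formula to the pointwise equality $ \mathbf{p} ( H_{ \overline{ \mu } } ) ( \cdot , z ) = H_{ \mu } ( \cdot ) $ for every $ z $ requires carefully combining uniqueness of the generalized mean curvature with the freedom to vary $ \rho $, or alternatively a direct translation-invariance argument on the first variation of $ \overline{ \mu } $ under $ z $-shifts.
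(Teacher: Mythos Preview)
Your proposal is correct and follows essentially the same route as the paper. The only notable difference is in the identification step you flag: the paper avoids the uniqueness-of-mean-curvature argument by directly observing that $\delta(\overline{\mu}+a\overline{\nu})(Y)=\delta((\mu+a\nu)\times\mathcal{L}^1)(Y)$ for all tangential $Y$ (via $\tilde{Y}(x):=\int_{\R}P_{T_{(x,z)}(\overline{\mu}+a\overline{\nu})}(Y(x,z))\,dz$), which immediately forces $H_{\overline{\mu}}$ to be $z$-independent; your variant through uniqueness and varying $\rho$ reaches the same conclusion but needs a bit more care with $\rho$-dependent null sets.
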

	\begin{proof}
		Let $ X \in \mathcal{ T }_{ \partial M } C^1_c ( \R^{ n + 1 } ; \R^{ n + 1 } ) $ and $ \rho \in C^2_c ( ( 0 , \infty ) ; [ 0 , \infty ) ) $ with $ \int_0^{ \infty } \rho\,d z = 1 $ fix. From \eqref{contact_angle_condition}, Lemma \ref{product_lemma}, and Fubini's theorem, we obtain
		\begin{equation*}
			\begin{split}
				- \int_{ M \times \R } \phi H_{ \overline{ \mu } } \cdot e_z\,d \overline{ \mu }
				= - \int_{ M \times \R } H_{ \overline{ \mu } } \cdot Y\,d \overline{ \mu }
				&= \int_{ M \times \R } \dv_{ T_{ ( x , z ) } \overline{ \mu } } Y\,d \overline{ \mu } + a \int_{ M \times \R } \dv_{ T_{ ( x , z ) } \overline{ \nu } } Y\,d \overline{ \nu }\\
				&= \int_{ \R } \int_M \partial_z \phi\,d ( \mu + a \nu ) d z = 0
			\end{split}
		\end{equation*}
		for all $ \phi \in C^1_c ( \R^{ n + 1 } \times \R ) $, where we set $ Y = \phi \e_z $, and we used $ \int_{ \R } \partial_z \phi ( x , z )\,d z = 0 $ for all $ x \in M $. This implies that $ H_{ \overline{ \mu } } \cdot \e_z = 0 $ for $ \mu $-almost every $ x \in M $ and all $ z \in \R $. Therefore, thanks to \eqref{contact_angle_condition}, the $ z $-invariant property of $ ( \overline{ \mu }, \overline{ \nu } ) $, Lemma \ref{product_lemma}, and Fubini's theorem, we obtain
		\begin{equation}
			\label{calculas_H_mu}
			\begin{split}
				&\int_M \dv_{ T_x \mu } X\,d \mu + a \int_M \dv_{ T_x \nu } X\,d \nu = \int_{ M \times \R } \rho \dv_{ T_{ ( x , z ) } \overline{ \mu } } \tilde{ X }\,d \overline{ \mu } + a \int_{ M \times \R } \rho \dv_{ T_{ ( x , z ) } \overline{ \nu } } \tilde{ X }\,d \overline{ \nu }\\
				&= \int_{ M \times \R } \dv_{ T_{ ( x , z ) } \overline{ \mu } } ( \rho \tilde{ X } )\,d \overline{ \mu } + a \int_{ M \times \R } \dv_{ T_{ ( x , z ) } \overline{ \nu } } ( \rho \tilde{ X } )\,d \overline{ \nu } - \int_{ M \times \R } \partial_z \rho\,d ( \overline{ \mu } + a \overline{ \nu } )\\
				&= - \int_{ M \times \R } \rho \tilde{ X } \cdot H_{ \overline{ \mu } }\,d \overline{ \mu } - \int_{ \R } \partial_z \rho\,d z \int_M d ( \mu + a \nu ) = - \int_{ M \times \R } \rho \tilde{ X } \cdot H_{ \overline{ \mu } } \,d \overline{ \mu },
			\end{split}
		\end{equation}
		where $ \tilde{ X } = {}^t ( X , 1 ) $ and we used $ \int_{ \R } \partial_z \rho\,d z = 0 $ again. Furthermore, set
		\[
		\tilde{ Y } ( x ) := \int_{ \R } P_{ T_{ ( x , z ) } ( \overline{ \mu } + a \overline{ \nu } ) } ( Y ( x , z ) )\,d z \text{ for } Y \in \mathcal{ T }_{ \partial M \times \R } C^1_c ( \R^{ n + 1 } \times \R ; \R^{ n + 2 } ),
		\]
		we obtain $ \delta ( \overline{ \mu } + a \overline{ \nu } ) ( Y ) = \delta ( \mu + a \nu ) ( \tilde{ Y } ) $ by the definition of the first variation formula. From this and Lemma \ref{product_lemma}, it follows that $ \delta ( \overline{ \mu } + a \overline{ \nu } ) ( Y ) = \delta ( ( \mu + a \nu ) \times \mathcal{ L }^1 ) ( Y ) $ for any $ Y \in \mathcal{ T }_{ \partial M \times \R } C^1_c ( \R^{ n + 1 } \times \R ; \R^{ n + 2 } ) $, which implies that the mean curvature $ H_{ \overline{ \mu } } $ is independent of $ z $. Therefore, by \eqref{calculas_H_mu}, we obtain that $ ( \mu , \nu ) $ has the contact angle $ \theta $ and $ \mathbf{ p } ( H_{ \overline{ \mu } } ( \cdot , z ) ) = H_{ \mu } ( \cdot ) $ for all $ z \in \R $. By \eqref{orthogonal_z_MC} and $ \mathbf{ p } ( H_{ \overline{ \mu } } ) = H_{ \mu } $, it follows from $ \int_{ \R } \rho\,d z = 1 $ that
		\begin{equation*}
			\begin{split}
				\int_{ M \times \R } ( \D' ( \rho \phi ) - \rho \phi H_{ \overline{ \mu } } ) \cdot H_{ \overline{ \mu } }\,d \overline{ \mu } &= \int_{ M \times \R } \phi \D' \rho \cdot H_{ \overline{ \mu } } + \rho ( ( \D' \phi - \phi H_{ \overline{ \mu } } ) \cdot H_{ \overline{ \mu } } )\,d \overline{ \mu }\\
				&=\int_M ( \D \phi - \phi H_{ \mu } ) \cdot H_{ \mu }\,d \mu
			\end{split}
		\end{equation*}
		for any $ \phi \in C^2_c ( \R^{ n + 1 } ; [ 0 , \infty ) ) $ with $ \D \phi \in \mathcal{ T }_{ \partial M } C^1_c ( \R^{ n + 1 } ; \R^{ n + 1 } ) $. This completes the proof.
	\end{proof}
	
	Finally, we obtain the existence theorem of the Brakke flow with contact angle from Lemma \ref{translative_Brakkeflow}, Lemma \ref{product_lemma}, and Lemma \ref{contantangle_product}.
	
	\begin{theorem}
		\label{Brakke_via_Ilmanen}
		Let $ ( \overline{ \mu }_t , \overline{ \nu }_t ) $ be as in Lemma \ref{translative_Brakkeflow}. Let $ \rho \in C^2_c ( ( 0 , \infty ) ; [ 0 , \infty ) ) $ be such that $ \int_0^{ \infty } \rho\,d z = 1 $ and fix. Set $ \mu_t ( \phi ) := \overline{ \mu }_t ( \rho \phi ) $ and $ \nu_t ( \phi ) := \overline{ \nu }_t ( \rho \phi ) $ for $ \phi \in C^0_c ( \R^{ n + 1 } ; [ 0 , \infty ) ) $, then the following hold;
		\begin{enumerate}
			\item for all $ t > 0 $ except for a countable set, $ \overline{ \mu }_t + a \overline{ \nu }_t = ( \mu_t + a \nu_t ) \times \mathcal{ L }^1 $;
			\item $ ( \mu_t , \nu_t ) $ is a Brakke flow and it has the contact angle $ \theta $ in $ M \subset \R^{ n + 1 } $.
		\end{enumerate}
	\end{theorem}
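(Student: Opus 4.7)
The plan is to transfer the Brakke flow structure from $(\overline{\mu}_t, \overline{\nu}_t)$ on $M \times \R$ down to $(\mu_t, \nu_t)$ on $M$ by integrating against $\rho(z)\,dz$, using the product lemmas as the reduction machinery. Part (1) is immediate: by Lemma \ref{translative_Brakkeflow} (3), the sum $\overline{\mu}_t + a\overline{\nu}_t$ is $z$-invariant for all $t$ outside a countable set, so Lemma \ref{product_lemma} (1) applied to this sum yields the product decomposition $\overline{\mu}_t + a\overline{\nu}_t = (\mu_t + a\nu_t) \times \mathcal{L}^1$, and the same lemma also guarantees that the right-hand side does not depend on the choice of $\rho$.

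For part (2), I first verify the structural hypotheses of Definition \ref{def_of_Brakke} at almost every time. By Lemma \ref{translative_Brakkeflow}, for a.e. $t > 0$ both $\overline{\mu}_t$ and $\overline{\nu}_t$ are individually $z$-invariant integral varifolds and $(\overline{\mu}_t, \overline{\nu}_t)$ has contact angle $\theta$ in $M \times \R$. Lemma \ref{product_lemma} (2) then places $(\mu_t, \nu_t)$ in $\IV_n(M) \times \IV_n(\partial M)$, and Lemma \ref{contantangle_product} supplies the contact angle $\theta$ for $(\mu_t, \nu_t)$ together with the pointwise identities $\mathbf{p}(H_{\overline{\mu}_t}(\cdot, z)) = H_{\mu_t}(\cdot)$ and $H_{\overline{\mu}_t} \cdot \e_z = 0$. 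In particular $\lv H_{\mu_t}(x) \rv^2 = \lv H_{\overline{\mu}_t}(x,z) \rv^2$ whenever both sides are defined, so the required local $L^2$ bound on $H_{\mu_t}$ follows from Lemma \ref{Basic_props} (3) for $(\overline{\mu}_t, \overline{\nu}_t)$ via Fubini with a cutoff in $z$; the local mass bound of Definition \ref{def_of_Brakke} (2) transfers from $(\overline{\mu}_t, \overline{\nu}_t)$ by the product decomposition.

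The core step is Brakke's inequality. Given a test function $\phi \in C^2_c(\R^{n+1} \times [0,\infty); [0,\infty))$ with $\D \phi(\cdot, t) \in \mathcal{T}_{\partial M} C^1_c(\R^{n+1}; \R^{n+1})$, I form the lifted test function $\Phi(x, z, t) := \rho(z)\,\phi(x, t)$. The identity $\D' \Phi = (\rho\,\D \phi, \phi\,\rho')$ together with the fact that the outward normal to $\partial M \times \R$ has zero $z$-component show that $\D' \Phi(\cdot, \cdot, t) \in \mathcal{T}_{\partial M \times \R} C^1_c$, so $\Phi$ is admissible for the Brakke flow $(\overline{\mu}_t, \overline{\nu}_t)$. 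Applying \eqref{Brakkeineq} to $\Phi$: the boundary evaluations collapse to $(\mu_{t_i} + a\nu_{t_i})(\phi(\cdot, t_i))$ thanks to $\int \rho\,dz = 1$; the $\pt$-term reduces analogously to $\int_M \pt \phi \, d(\mu_t + a\nu_t)$; and the quadratic/divergence combination collapses by exactly the identity of Lemma \ref{contantangle_product},
\[
\int_{M \times \R} (\D'(\rho\phi) - \rho\phi H_{\overline{\mu}_t}) \cdot H_{\overline{\mu}_t}\,d\overline{\mu}_t = \int_M (\D \phi - \phi H_{\mu_t}) \cdot H_{\mu_t} \, d\mu_t.
\]
Assembling these three reductions yields Brakke's inequality for $(\mu_t, \nu_t)$ on the class of $C^2_c$ functions with $\D \phi \in \mathcal{T}_{\partial M} C^1_c$, and a standard approximation via Lemma \ref{density_of_C2t}, using the uniform local mass and $L^2$-curvature bounds to pass the inequality to the limit, upgrades the result to the admissible class of Definition \ref{def_of_Brakke}.

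The only substantive obstacle I foresee is bookkeeping around the exceptional times on which $\overline{\mu}_t, \overline{\nu}_t$ may not be individually $z$-invariant, integral, or satisfy the required structural properties. Since these sets are either countable or $\mathcal{L}^1$-null, they contribute nothing to the time-integral on the right of \eqref{Brakkeineq}, and the measurability argument already used in the proof of Theorem \ref{compactness_Brakke} ensures that the remaining integrands are measurable in $t$; thus no fundamentally new input is required beyond the lemmas already in place.
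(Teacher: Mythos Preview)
Your proposal is correct and follows exactly the route the paper intends: the paper does not write out a proof but simply states that the theorem follows from Lemma~\ref{translative_Brakkeflow}, Lemma~\ref{product_lemma}, and Lemma~\ref{contantangle_product}, and your argument is precisely the natural expansion of that claim. The reduction of the Brakke inequality via the lifted test function $\Phi(x,z,t)=\rho(z)\phi(x,t)$, together with the collapse identity of Lemma~\ref{contantangle_product}, is the intended mechanism, and your treatment of the exceptional times is adequate.
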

	
	When applying the compactness theorem of Brakke flows, one can take a further subsequence using the compactness theorem of sets of finite perimeter by \eqref{themassbddE}: there exists a set of finite perimeter $ E \subset M \times [ 0 , \infty ) $ such that
	\[
	\chi_{ E^{ \varepsilon } } \to \chi_E \text{ in } L^1_{ loc } ( M \times [ 0 , \infty ) ) , \quad \lV \D' \chi_E \rV \leq \liminf_{ \varepsilon \to 0 } \lV \D' \chi_{ E^{ \varepsilon } } \rV,
	\]
	and we can show that $ ( \mu_t , \nu_t ) $ and $ E_t $ fit as the concept of the enhanced motion in \cite[Section 8.1]{ilmanen1994elliptic}. Furthermore, we ensure that the Brakke flow $ ( \mu_t , \nu_t ) $ starts continuously with the given initial datum $ \partial^* E_0 $.
	
	\begin{prop}
		\label{enhancedmotion}
		For $ ( \mu_t , \nu_t ) $ and $ E_t $ constructed above, we have the following properties;
		\begin{enumerate}
			\item $ \lim_{ t \to 0^+ } ( \mu_t + a \nu_t ) = \mu_0 + a \nu_0 = \lV \D \chi_{ E_0 } \rV \llcorner_{ M^{ \circ } } + a \lV \D \chi_{ E_0 } \rV \llcorner_{ \partial M } $, where we take the limit over the times $ t $ where (2) holds;
			\item for almost every $ t > 0 $, $ \lV \D \chi_{ E_t } \rV \llcorner_{ M^{ \circ } } + a \lV \D \chi_{ E_t } \rV \llcorner_{ \partial M } \leq \mu_t + a \nu_t $;
			\item for all Borel set $ I \subset [ 0 , \infty ) $,
			\[
			\lV \D' \chi_E \rV ( M \times I ) \leq \frac{ 1 }{ a } \Bigl( \mathcal{ L }^1 ( I ) + ( \mathcal{ L }^1 ( I ) )^{ \frac{ 1 }{ 2 } } \Bigr) \mathcal{ H }^n ( \partial^* E_0 ).
			\]
		\end{enumerate}
	\end{prop}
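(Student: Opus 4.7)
\emph{Plan.} I would prove (3) first, then (2), and finally (1), since (3) provides the H\"older-in-time bound on $ \chi_E $ needed for (2), and both feed into (1).

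For (3), the starting point is the elliptic-regularization bound \eqref{themassbddE} applied to the rescaled set $ E^{ \varepsilon } $. Since $ a \in ( 0 , 1 ) $, we have
\[
    \lV \D' \chi_{ E^{ \varepsilon } } \rV ( M \times I ) = \bigl( \mathcal{ H }^{ n + 1 } \llcorner_{ ( \partial^* E^{ \varepsilon } )_i } + \mathcal{ H }^{ n + 1 } \llcorner_{ ( \partial^* E^{ \varepsilon } )_b } \bigr) ( M \times I ) \leq \frac{ 1 }{ a } \bigl( \mathcal{ L }^1 ( I ) + \varepsilon^2 + ( \mathcal{ L }^1 ( I ) + \varepsilon^2 )^{ 1 / 2 } \bigr) \mathcal{ H }^n ( \partial^* E_0 )
\]
by \eqref{themassbddE}. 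The $ L^1_{ loc } $ convergence $ \chi_{ E^{ \varepsilon } } \to \chi_E $ together with lower semicontinuity of perimeter then yields the desired bound on any open $ I $, and approximation via inner/outer regularity extends it to arbitrary Borel $ I $.

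For (2), let $ \phi \in C^0_c ( \R^{ n + 1 } ; [ 0 , \infty ) ) $ and fix $ \rho \in C^0_c ( \R ; [ 0 , \infty ) ) $ with $ \int \rho\,d z = 1 $. The essential observation is an asymmetric coarea inequality: on $ ( \partial^* S^{ \varepsilon } ( t ) )_i $ the intrinsic Jacobian of $ ( x , z ) \mapsto z $ equals $ \lv P_{ T_{ ( x , z ) } ( \partial^* S^{ \varepsilon } ( t ) ) } ( \e_z ) \rv \leq 1 $, whereas on $ ( \partial^* S^{ \varepsilon } ( t ) )_b \subset \partial M \times \R $ the tangent plane equals $ T_x ( \partial M ) \oplus \R \e_z $ by Theorem \ref{reg_for_S}, so the Jacobian is exactly $ 1 $. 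Using $ ( S^{ \varepsilon } ( t ) )_z = E^{ \varepsilon }_{ \varepsilon z + t } $ and the coarea formula,
\[
    \mu^{ \varepsilon }_t ( \rho \phi ) + a \nu^{ \varepsilon }_t ( \rho \phi ) \geq \int_{ \R } \rho ( z ) \Bigl( \lV \D \chi_{ E^{ \varepsilon }_{ \varepsilon z + t } } \rV \llcorner_{ M^{ \circ } } + a \lV \D \chi_{ E^{ \varepsilon }_{ \varepsilon z + t } } \rV \llcorner_{ \partial M } \Bigr) ( \phi )\,d z.
\]
From (3) one deduces $ \chi_E \in C^{ 1 / 2 }_{ loc } ( [ 0 , \infty ) ; L^1 ( M ) ) $; combined with $ \chi_{ E^{ \varepsilon } } \to \chi_E $ in $ L^1_{ loc } $ and a diagonal/convolution argument, for a.e. $ t $ one extracts a subsequence along which $ \chi_{ E^{ \varepsilon }_{ \varepsilon z + t } } \to \chi_{ E_t } $ in $ L^1 ( M ) $ for a.e. $ z \in \supp \rho $. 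Fatou's lemma plus the lower semicontinuity of the capillary functional \cite[Proposition 19.1]{maggi2012sets} then give
\[
    \Bigl( \lV \D \chi_{ E_t } \rV \llcorner_{ M^{ \circ } } + a \lV \D \chi_{ E_t } \rV \llcorner_{ \partial M } \Bigr) ( \phi ) \leq \liminf_{ \varepsilon \to + 0 } \bigl( \mu^{ \varepsilon }_t ( \rho \phi ) + a \nu^{ \varepsilon }_t ( \rho \phi ) \bigr) = ( \mu_t + a \nu_t ) ( \phi ),
\]
which is (2).

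For (1), I first identify $ \mu_0 + a \nu_0 $. The slicing inequality of (2) applied at $ t = 0 $ (with $ E_{ | t = 0 } = E_0 $) yields $ \mu_0 + a \nu_0 \geq \mathcal{ H }^n \llcorner_{ \partial^* E_0 \cap M^{ \circ } } + a \mathcal{ H }^n \llcorner_{ \partial^* E_0 \cap \partial M } $; conversely, \eqref{themassbddS} (applied to the translation) combined with the $ z $-invariant product structure $ \overline{ \mu }_t + a \overline{ \nu }_t = ( \mu_t + a \nu_t ) \times \mathcal{ L }^1 $ from Lemma \ref{product_lemma} gives the matching total-mass bound, forcing equality of measures. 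Then Lemma \ref{Basic_props} (1) gives $ \limsup_{ t \to 0 + } ( \mu_t + a \nu_t ) ( \phi ) \leq ( \mu_0 + a \nu_0 ) ( \phi ) $ for admissible $ \phi $, while (2) combined with $ L^1 $-convergence $ \chi_{ E_t } \to \chi_{ E_0 } $ and lower semicontinuity of the capillary functional furnishes the matching $ \liminf $; approximation via Lemma \ref{density_of_C2t} concludes. The main obstacle is the balancing in step (2): the capillary weight $ a $ on the boundary is needed to compensate that $ \lv P_T ( \e_z ) \rv < 1 $ (only an inequality) on the interior, whereas on the boundary the Jacobian is $ 1 $ exactly, and these two asymmetries must interact correctly through the coarea formula so that the limit measures pair as the capillary functional of $ E_t $.
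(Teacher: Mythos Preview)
Your overall strategy---proving (3), then (2), then (1), using the coarea formula to pass from the translated soliton measures to the slice perimeters, and combining lower semicontinuity of the capillary functional with the mass bound \eqref{themassbddS} for the matching upper bound---is exactly the route the paper takes. The decomposition the paper uses, rewriting the capillary functional as $ a \lV \D \chi \rV + ( 1 - a ) \lV \D \chi \rV \llcorner_{ M^{ \circ } } $ and applying standard lower semicontinuity to each piece, is equivalent to your direct citation of \cite[Proposition 19.1]{maggi2012sets}.

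There is one point to correct in step (2). You write that ``from (3) one deduces $ \chi_E \in C^{ 1 / 2 }_{ loc } ( [ 0 , \infty ) ; L^1 ( M ) ) $'' and use this for the slice convergence $ \chi_{ E^{ \varepsilon }_{ \varepsilon z + t } } \to \chi_{ E_t } $. Two issues: first, (3) bounds the full space-time perimeter, and extracting $ L^1 $-in-time H\"older continuity from this is not immediate (the paper proves H\"older continuity \emph{after} this proposition, via the separate velocity estimate of Lemma~\ref{time_disconti_0}); second, even granting H\"older continuity of the \emph{limit} $ E $, this does not by itself control the approximations $ E^{ \varepsilon }_{ \varepsilon z + t } $. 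The paper's argument here is cleaner and avoids the detour entirely: since translation is continuous on $ L^1 $ and $ \varepsilon z \to 0 $, one has $ \chi_{ E^{ \varepsilon } + \varepsilon z \e_z } \to \chi_E $ in $ L^1_{ loc } $, and then a subsequence plus Fubini gives $ \chi_{ E^{ \varepsilon }_{ t + \varepsilon z } } \to \chi_{ E_t } $ in $ L^1 ( M ) $ for a.e.\ $ t $. A minor related point: in (1) you invoke ``(2) applied at $ t = 0 $'', but (2) is only asserted for a.e.\ $ t > 0 $; the paper obtains the lower bound at $ t = 0 $ by taking $ t \to 0^+ $ through the good times of (2) and using lower semicontinuity once more.
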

	\begin{proof}
		First of all, by \eqref{themassbddE}, we have
		\begin{equation*}
			\lV \D' \chi_E \rV ( M \times I ) \leq \liminf_{ \varepsilon \to 0 } \lV \D' \chi_{ E^{ \varepsilon } } \rV ( M \times I ) \leq \frac{ 1 }{ a } \Bigl( \mathcal{ L }^1 ( I ) + ( \mathcal{ L }^1 ( I ) )^{ \frac{ 1 }{ 2 } } \Bigr) \mathcal{ H }^n ( \partial^* E_0 ),
		\end{equation*}
		where we used the lower semi-continuous of variation measures. Thus (3) is proved. Let $ z > 0 $ be arbitrary. Since the parallel translation is continuous in $ L^1 $, we have $ \chi_{ E^{ \varepsilon } + \varepsilon z \e_z } \to \chi_E $ in $ L_{ loc }^1 ( \R^{ n + 1 } \times [ 0 , \infty ) ) $. Taking further subsequence from $ \{ E^{ \varepsilon } + \varepsilon z \e_z \} $ if necessary, we see that $ \chi_{ E^{ \varepsilon } } ( x , t + \varepsilon z ) $ converges to $ \chi_{ E }( x , t ) $ as $ \varepsilon \to 0 $ for almost every $ ( x , t ) \in \R^{ n + 1 } \times [ 0 , \infty ) $, and by Fubini's theorem, we have $ \chi_{ E^{ \varepsilon }_{ t + \varepsilon z } } \to \chi_{ E_t } $ in $ L^1 ( \R^{ n + 1 } ) $ for almost ever time $ t > 0 $. Thus we obtain $ \lV \D \chi_{ E_t } \rV \leq \liminf_{ \varepsilon \to 0 } \lV \D \chi_{ E^{ \varepsilon }_{ t + \varepsilon z } } \rV $ for almost every time $ t \geq 0 $ by the lower semi-continuity of variation measures. Note that it follows from the definition of $ S^{ \varepsilon } $ and $ E^{ \varepsilon } $ that $ S^{ \varepsilon } ( t )_z = S^{ \varepsilon }_{ z + t / \varepsilon } = E^{ \varepsilon }_{ t + \varepsilon z } $. We also note that it satisfies
		\begin{equation*}
			\begin{split}
				\lV \D \chi_{ E_t } \rV \llcorner_{ M^{ \circ } } + a \lV \D \chi_{ E_t } \rV \llcorner_{ \partial M } &= a \lV \D \chi_{ E_t } \rV + ( 1 - a ) \lV \D \chi_{ E_t } \rV \llcorner_{ M^{ \circ } }\\
				&\leq \liminf_{ \varepsilon \to 0 } ( a \lV \D \chi_{ E^{ \varepsilon }_{ t + \varepsilon z } } \rV + ( 1 - a ) \lV \D \chi_{ E^{ \varepsilon }_{ t + \varepsilon z } } \rV \llcorner_{ M^{ \circ } } )\\
				&= \liminf_{ \varepsilon \to 0 } ( \lV \D \chi_{ E^{ \varepsilon }_{ t + \varepsilon z } } \rV \llcorner_{ M^{ \circ } } + a \lV \D \chi_{ E^{ \varepsilon }_{ t + \varepsilon z } } \rV \llcorner_{ \partial M } ).
			\end{split} 
		\end{equation*}
		Let $ \phi \in C^0_c ( \R^{ n + 1 } ; [ 0 , \infty ) ) $ and $ \rho \in C^2_c ( \R ; [ 0 , \infty ) ) $ with $ \int_{ \R } \rho\,d z = 1 $ be arbitrary. Then we obtain
		\begin{equation*}
			\begin{split}
				&\mu_t ( \phi ) + a \nu_t ( \phi )
				= \overline{ \mu }_t ( \rho \phi ) + a \overline{ \nu }_t ( \rho \phi )
				= \lim_{ \varepsilon \to 0 } \int_{ M \times ( - z / \varepsilon , \infty ) } \rho \phi\,d ( \mu_t^{ \varepsilon } + a \nu_t^{ \varepsilon } )\\
				&\geq \liminf_{ \varepsilon \to 0 } \int^{ \infty }_{ - z / \varepsilon } \rho \Biggl( \int_{ M^{ \circ } } \phi\,d \lV \D \chi_{ S^{ \varepsilon } ( t )_z } \rV + a \int_{ \partial M } \phi\,d \lV \D \chi_{ S^{ \varepsilon } ( t )_z } \rV \Biggr)\,d z\\
				&= \liminf_{ \varepsilon \to 0 } \int^{ \infty }_{ - z / \varepsilon } \rho \Biggl( a \int_M \phi\,d \lV \D \chi_{ S^{ \varepsilon } ( t )_z } \rV + ( 1 - a ) \int_{ M^{ \circ } } \phi\,d \lV \D \chi_{ S^{ \varepsilon } ( t )_z } \rV \Biggr)\,d z\\
				&\geq a \int_{ \R } \rho \liminf_{ \varepsilon \to 0 } \lV \D \chi_{ E^{ \varepsilon }_{ t + \varepsilon z } } \rV ( \phi )\,d z + ( 1 - a ) \int_{ \R } \rho \liminf_{ \varepsilon \to 0 } \lV \D \chi_{ E^{ \varepsilon }_{ t + \varepsilon z } } \rV \llcorner_{ M^{ \circ } } ( \phi )\,d z\\
				&\geq a \lV \D \chi_{ E_t } \rV ( \phi ) + ( 1 - a ) \lV \D \chi_{ E_t } \rV \llcorner_{ M^{ \circ } } ( \phi )
				= \lV \D \chi_{ E_t } \rV \llcorner_{ M^{ \circ } } ( \phi ) + a \lV \D \chi_{ E_t } \rV \llcorner_{ \partial M } ( \phi ),
			\end{split}
		\end{equation*}
		where we used the co-area formula, Fatou's Lemma, the lower semi-continuity of $ \lV \D \chi_{ E_t } \rV $, and $ \int_{ \R } \rho\,d z = 1 $. Next, by taking the limit in \eqref{themassbddS}, we have
		\begin{equation*}
			\begin{split}
				( \overline{ \mu }_t + a \overline{ \nu }_t ) ( M \times I ) \leq \mathcal{ L }^1 ( I ) ( \mathcal{ H }^n ( ( \partial^* E_0 )_i ) + a \mathcal{ H }^n ( ( \partial^* E_0 )_b ) )
			\end{split}
		\end{equation*}
		for all $ t \geq 0 $. Here let $ \rho \in C^2_c ( \R ; [ 0 , \infty ) ) $ be such that $ \int_{ \R } \rho\,d z = 1 $. Then, by uniform approximation of $ \rho $ by the step functions, we have
		\begin{equation}
			\label{enhanced_1}
			( \mu_t + a \nu_t ) ( M ) = \int_{ M \times \R } \rho\,d ( \overline{ \mu }_t + a \overline{ \nu }_t ) \leq  \mathcal{ H }^n ( ( \partial^* E_0 )_i ) + a \mathcal{ H }^n ( ( \partial^* E_0 )_b )
		\end{equation}
		for all $ t \geq 0 $. On the other hand, use the lower semi-continuous of variation measures, (2) and Proposition \ref{Basic_props} (2), then we have
		\begin{equation}
			\label{enhanced_2}
			\begin{split}
				\lV \D \chi_{ E_0 } \rV \llcorner_{ M^{ \circ } } + a \lV \D \chi_{ E_0 } \rV \llcorner_{ \partial M } &\leq \liminf_{ t \to 0^+ } ( \lV \D \chi_{ E_t } \rV \llcorner_{ M^{ \circ } } + a \lV \D \chi_{ E_t } \rV \llcorner_{ \partial M } )\\
				&\leq \lim_{ t \to 0^+ } ( \mu_t + a \nu_t ) \leq \mu_0 + a \nu_0.
			\end{split}
		\end{equation}
		Here, we take the limit over the times $ t $ where (2) holds in \eqref{enhanced_2}. Thus (1) follows from \eqref{enhanced_1} and \eqref{enhanced_2}.
	\end{proof}
	
	\subsection{Continuity property of $ E_t $}
	Finally, we discuss the continuity of the volume change of $ E_t $. To this end, we prove that $ S^{ \varepsilon } $ extends in the $ z $ direction in the measure-theoretic sense.
	
	\begin{lemma}
		\label{time_disconti_0}
		For $ S^{ \varepsilon } $, we have
		\[
		\lim_{ \varepsilon \to 0 } \lV \D' \chi_{ S^{ \varepsilon } } \rV \bigl( \bigl\{ ( x , z ) \in \partial^* S^{ \varepsilon } : \lv P_{ T_{ ( x , z ) } ( \partial^* S^{ \varepsilon } ) } ( \e_z ) \rv^2 \leq 1 / 2 \bigr\} \bigr) = 0.
		\]
	\end{lemma}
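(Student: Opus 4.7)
The plan is to exploit the algebraic identity in Lemma~\ref{E_L_eq}~(4), which states $\varepsilon^{2}\lv H^{\varepsilon}\rv^{2} + \lv P_{T_{(x,z)}(\partial^{*}S^{\varepsilon})}(\e_{z})\rv^{2} = 1$ on the interior part of the reduced boundary. On the set in question the tangential projection is at most $1/2$, so the mean curvature is forced to be of order $\varepsilon^{-1}$, which the uniform $L^{2}$-curvature bound \eqref{L2MC_estimate} can only tolerate on a set of $\mathcal{H}^{n+1}$-measure $O(\varepsilon)$. Writing $A_{\varepsilon} := \{(x,z) \in \partial^{*}S^{\varepsilon} : \lv P_{T_{(x,z)}(\partial^{*}S^{\varepsilon})}(\e_{z})\rv^{2} \leq 1/2\}$ and recalling that $\lV \D'\chi_{S^{\varepsilon}} \rV = \mathcal{H}^{n+1}\llcorner_{\partial^{*}S^{\varepsilon}}$ by De~Giorgi's structure theorem, the task reduces to showing $\mathcal{H}^{n+1}(A_{\varepsilon}) = O(\varepsilon)$.

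Since the identity in Lemma~\ref{E_L_eq}~(4) is only asserted on $(\partial^{*}S^{\varepsilon})_{i}$, the first step is to dispose of the boundary portion of $A_{\varepsilon}$. Here I would argue that $(\partial^{*}S^{\varepsilon})_{b}$ is a countably $(n+1)$-rectifiable subset of the smooth $(n+1)$-dimensional hypersurface $\partial M \times (0,\infty)$, so at $\mathcal{H}^{n+1}$-almost every such point the approximate tangent plane of $\partial^{*}S^{\varepsilon}$ must coincide with $T_{x}(\partial M) \oplus \mathrm{span}(\e_{z})$. In particular $\e_{z}$ lies in this tangent plane, giving $\lv P_{T_{(x,z)}(\partial^{*}S^{\varepsilon})}(\e_{z})\rv^{2} = 1$ for $\mathcal{H}^{n+1}$-a.e.\ $(x,z) \in (\partial^{*}S^{\varepsilon})_{b}$. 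Hence $A_{\varepsilon} \cap (\partial M \times (0,\infty))$ is $\mathcal{H}^{n+1}$-null and one may replace $A_{\varepsilon}$ by $A_{\varepsilon} \cap (\partial^{*}S^{\varepsilon})_{i}$ without affecting its measure.

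On the interior piece, Lemma~\ref{E_L_eq}~(4) combined with the defining bound of $A_{\varepsilon}$ yields $\varepsilon^{2}\lv H^{\varepsilon}\rv^{2} \geq 1/2$ pointwise $\mathcal{H}^{n+1}$-a.e.\ on $A_{\varepsilon} \cap (\partial^{*}S^{\varepsilon})_{i}$. Integrating and invoking \eqref{L2MC_estimate} then gives
\[
\frac{\mathcal{H}^{n+1}(A_{\varepsilon})}{2\varepsilon} \leq \int_{A_{\varepsilon}\cap(\partial^{*}S^{\varepsilon})_{i}} \varepsilon\lv H^{\varepsilon}\rv^{2}\,d\mathcal{H}^{n+1} \leq \mathcal{H}^{n}((\partial^{*}E_{0})_{i}) + a\mathcal{H}^{n}((\partial^{*}E_{0})_{b}),
\]
so $\mathcal{H}^{n+1}(A_{\varepsilon}) \leq 2\varepsilon\bigl(\mathcal{H}^{n}((\partial^{*}E_{0})_{i}) + a\mathcal{H}^{n}((\partial^{*}E_{0})_{b})\bigr) \to 0$ as $\varepsilon \to +0$. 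The only non-routine point I expect is the second paragraph: the coincidence of approximate tangent planes on the wetted boundary. This is a standard fact (two rectifiable sets of the same dimension sharing a subset of positive $\mathcal{H}^{n+1}$-measure have matching approximate tangent planes almost everywhere on that subset), but it is the one place where we genuinely use that $\partial M \times (0,\infty)$ is a smooth hypersurface of the same dimension as $\partial^{*}S^{\varepsilon}$.
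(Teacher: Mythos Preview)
Your argument is correct and follows essentially the same approach as the paper: both proofs first discard the wetted boundary portion (where $\e_z$ is tangential, so the projection has norm $1$), then convert the defining inequality of $A_\varepsilon$ into the lower bound $\varepsilon^{2}\lv H^{\varepsilon}\rv^{2}\ge 1/2$ on the interior via Lemma~\ref{E_L_eq}, and finally apply the uniform $L^{2}$-curvature estimate \eqref{L2MC_estimate} in Chebyshev/Markov fashion to get an $O(\varepsilon)$ bound. The only cosmetic differences are that the paper invokes (1) together with the Pythagorean identity rather than (4) directly, and bounds by $2\varepsilon\,\mathcal{H}^{n}(\partial^{*}E_{0})$ instead of your slightly sharper $2\varepsilon\bigl(\mathcal{H}^{n}((\partial^{*}E_{0})_{i})+a\,\mathcal{H}^{n}((\partial^{*}E_{0})_{b})\bigr)$.
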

	\begin{proof}
		Since $\lv P_{ T_{ ( x , z ) } ( \partial^* S^{ \varepsilon } ) } ( \e_z ) \rv = 1 $ on $ \partial M \times ( 0 , \infty ) $, we have
		\[
		\lV \D' \chi_{ S^{ \varepsilon } } \rV \bigl( \bigl\{ \lv P_{ T_{ ( x , z ) } ( \partial^* S^{ \varepsilon } ) } ( \e_z ) \rv^2 \leq 1 / 2 \bigr\} \bigr) = \lV \D' \chi_{ S^{ \varepsilon } } \rV \llcorner_{ M^{ \circ } \times ( 0 , \infty ) } \bigl( \bigl\{ \lv P_{ T_{ ( x , z ) } ( \partial^* S^{ \varepsilon } ) } ( \e_z ) \rv^2 \leq 1 / 2 \bigr\} \bigr).
		\]
		From Lemma \ref{E_L_eq} (1) and $ 1 = \lv P_{ T_{ ( x , z ) } ( \partial^* S^{ \varepsilon } ) } ( \e_z ) \rv^2 + \lv P_{ T_{ ( x , z ) } ( \partial^* S^{ \varepsilon } ) }^{ \perp } ( \e_z ) \rv^2 $, we obtain
		\[
		\lV \D' \chi_{ S^{ \varepsilon } } \rV \llcorner_{ M^{ \circ } \times ( 0 , \infty ) } \bigl( \bigl\{ \lv P_{ T_{ ( x , z ) } ( \partial^* S^{ \varepsilon } ) } ( \e_z ) \rv^2 \leq 1 / 2 \bigr\} \bigr) = \lV \D' \chi_{ S^{ \varepsilon } } \rV \llcorner_{ M^{ \circ } \times ( 0 , \infty ) } \bigl( \bigl\{ 1 \leq 2 \varepsilon^2 \lv H^{ \varepsilon } \rv^2 \bigr\} \bigr).
		\]
		Thus, by Markov's inequality and \eqref{L2MC_estimate}, we compute
		\begin{equation*}
			\begin{split}
				&\lV \D' \chi_{ S^{ \varepsilon } } \rV \bigl( \bigl\{ \lv P_{ T_{ ( x , z ) } ( \partial^* S^{ \varepsilon } ) } ( \e_z ) \rv^2 \leq 1 / 2 \bigr\} \bigr) = \lV \D' \chi_{ S^{ \varepsilon } } \rV \llcorner_{ M^{ \circ } \times ( 0 , \infty ) } \bigl( \bigl\{ 1 \leq 2 \varepsilon^2 \lv H^{ \varepsilon } \rv^2 \bigr\} \bigr)\\
				&\leq 2 \varepsilon \int_{ ( \partial^* S^{ \varepsilon } )_i } \varepsilon \lv H^{ \varepsilon } \rv^2\,d \mathcal{ H }^{ n + 1 } \leq 2 \varepsilon \mathcal{ H }^n ( \partial^* E_0 ).
			\end{split}
		\end{equation*}
		Letting $ \varepsilon \to 0 $, we have the conclusion.
	\end{proof}
	
	Using the above lemma to apply the co-area formula to $ S^{ \varepsilon } $, we obtain the following.
	
	\begin{prop}
		\label{Holderconti_E_t}
		The characteristic function $ \chi_{ E_t } $ is $ 1 / 2 $-H\"{o}lder continuous in $ L^1 $ with respect to $ t \geq 0 $ except for a null set. 
	\end{prop}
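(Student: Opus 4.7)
The plan is to prove a uniform-in-$\varepsilon$ H\"older estimate for the approximations $\chi_{E^{\varepsilon}_t}$ and then pass to the limit $\varepsilon \to +0$. Fix $0 \leq t_1 < t_2$. Since $E^{\varepsilon}_t = S^{\varepsilon}_{t/\varepsilon}$ by the definition of $E^{\varepsilon} = \kappa_{\varepsilon}(S^{\varepsilon})$, the standard BV-slicing of $\chi_{S^{\varepsilon}}$ in the $z$-direction gives
\[
\int_M \lv \chi_{E^{\varepsilon}_{t_2}} - \chi_{E^{\varepsilon}_{t_1}} \rv\,d x \leq \lV \partial_z \chi_{S^{\varepsilon}} \rV \bigl(M \times (t_1/\varepsilon,\,t_2/\varepsilon)\bigr).
\]
Along the wetted part $(\partial^* S^{\varepsilon})_b \subset \partial M \times (0,\infty)$, the unit normal $\nu_{S^{\varepsilon}}$ is parallel to $\nu_{\partial M}$ and hence orthogonal to $\e_z$, so this piece contributes nothing. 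On the interior part, $\lv \nu_{S^{\varepsilon}} \cdot \e_z \rv = \lv P_{T^{\perp}_{(x,z)}(\partial^* S^{\varepsilon})}(\e_z) \rv$, and Lemma \ref{E_L_eq} (1) yields $\lv P_{T^{\perp}}(\e_z) \rv = \varepsilon \lv H^{\varepsilon} \rv$; therefore the right-hand side equals $\int_A \varepsilon \lv H^{\varepsilon} \rv\,d \mathcal{H}^{n+1}$, where $A := (\partial^* S^{\varepsilon})_i \cap (M \times (t_1/\varepsilon,\,t_2/\varepsilon))$.

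Next, I apply Cauchy--Schwarz to the factorization $\varepsilon \lv H^{\varepsilon} \rv = \sqrt{\varepsilon} \cdot \sqrt{\varepsilon \lv H^{\varepsilon} \rv^2}$:
\[
\int_A \varepsilon \lv H^{\varepsilon} \rv\,d \mathcal{H}^{n+1} \leq \sqrt{\varepsilon}\,\Biggl( \int_{(\partial^* S^{\varepsilon})_i} \varepsilon \lv H^{\varepsilon} \rv^2\,d \mathcal{H}^{n+1} \Biggr)^{1/2} \mathcal{H}^{n+1}(A)^{1/2}.
\]
The $L^2$-curvature bound \eqref{L2MC_estimate} controls the first integral factor by $C_0 := \mathcal{H}^n((\partial^* E_0)_i) + a \mathcal{H}^n((\partial^* E_0)_b)$, while \eqref{themassbddS} applied with $I = (t_1/\varepsilon,\,t_2/\varepsilon)$, so that $\mathcal{L}^1(I) = (t_2 - t_1)/\varepsilon$, gives $\mathcal{H}^{n+1}(A) \leq C_0 \bigl((t_2 - t_1)/\varepsilon + \varepsilon\bigr)$. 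The factors of $\varepsilon$ cancel cleanly and produce the uniform estimate
\[
\int_M \lv \chi_{E^{\varepsilon}_{t_2}} - \chi_{E^{\varepsilon}_{t_1}} \rv\,d x \leq C_0 \bigl((t_2 - t_1) + \varepsilon^2 \bigr)^{1/2}.
\]

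Finally, I pass to the limit $\varepsilon \to +0$. From $\chi_{E^{\varepsilon}} \to \chi_E$ in $L^1_{loc}(M \times [0, \infty))$ and Fubini, a diagonal extraction produces a countable dense set $T \subset [0, \infty)$ and a subsequence along which $\chi_{E^{\varepsilon}_t} \to \chi_{E_t}$ in $L^1(M)$ for every $t \in T$. The bound above then descends to $\lV \chi_{E_{t_2}} - \chi_{E_{t_1}} \rV_{L^1(M)} \leq C_0 (t_2 - t_1)^{1/2}$ for all $t_1, t_2 \in T$, and since $T$ is dense, the slicewise family $\{\chi_{E_t}\}_t$ coincides, off a null set of times, with a unique $1/2$-H\"older continuous extension $[0, \infty) \to L^1(M)$. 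There is no serious obstacle beyond bookkeeping: the essence is the identity $\lv \nu_{S^{\varepsilon}} \cdot \e_z \rv = \varepsilon \lv H^{\varepsilon} \rv$ from Lemma \ref{E_L_eq}, which converts the $z$-variation of $\chi_{S^{\varepsilon}}$ into a curvature integral, after which the Cauchy--Schwarz balancing between \eqref{L2MC_estimate} (scaling as $\varepsilon^{-1}$ after multiplication by $\varepsilon$) and \eqref{themassbddS} (scaling as $\varepsilon^{-1}$ in the length of $I$) delivers exactly the desired $(t_2 - t_1)^{1/2}$ growth.
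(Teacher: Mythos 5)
Your proof is correct, and it reaches the conclusion by a genuinely more direct route than the paper. The paper introduces the bad set $ \Sigma^{ \varepsilon } = \{ \lv P_{ T ( \partial^* S^{ \varepsilon } ) } ( \e_z ) \rv^2 \leq 1 / 2 \} $, shows via Lemma \ref{time_disconti_0} that its contribution vanishes, defines an approximate normal velocity $ V^{ \varepsilon } $ on the complement, derives the dual estimate \eqref{V_estimate} on $ \int_E \pt \phi $, and finally tests with $ \phi \approx \chi_{ [ t_1 , t_2 ] } $ --- which, as written, controls only the volume $ \mathcal{ L }^{ n + 1 } ( E_t ) $ and needs spatial localization of $ \phi $ to recover the full $ L^1 $ statement. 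You instead slice $ \chi_{ S^{ \varepsilon } } $ in the $ z $-direction, observe that the wetted part contributes nothing because $ \nu_{ S^{ \varepsilon } } = \nu_{ \partial M \times ( 0 , \infty ) } \perp \e_z $ there, convert $ \lv \nu_{ S^{ \varepsilon } } \cdot \e_z \rv $ into $ \varepsilon \lv H^{ \varepsilon } \rv $ via Lemma \ref{E_L_eq} (1), and balance \eqref{L2MC_estimate} against \eqref{themassbddS} by Cauchy--Schwarz. The analytic core (the $ \varepsilon^{ 1 / 2 } $ versus $ \varepsilon^{ - 1 / 2 } $ cancellation between the curvature bound and the slab mass bound) is the same in both arguments, but your packaging avoids $ \Sigma^{ \varepsilon } $ and Lemma \ref{time_disconti_0} entirely and yields the $ L^1 ( M ) $ difference of slices directly rather than through a distributional bound on $ \pt \chi_E $. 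The only points requiring care, which you correctly flag as bookkeeping, are that the slicing inequality holds only for a.e.\ pair of heights (with the exceptional null set depending on $ \varepsilon $, so one must intersect over a countable subsequence) and that $ \chi_{ E^{ \varepsilon }_t } \to \chi_{ E_t } $ in $ L^1 ( M ) $ only for a.e.\ $ t $ along a further subsequence; both are handled exactly as in the proof of Proposition \ref{enhancedmotion}. What the paper's approach buys in exchange is the estimate \eqref{V_estimate} on the space-time derivative of $ \chi_E $ against arbitrary test functions, which is the ingredient underlying the enhanced-motion structure; your argument does not produce that by-product, but it is not needed for the proposition as stated.
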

	\begin{proof}
		We set
		\[
		\Sigma^{ \varepsilon } := \bigl\{ ( x , z ) \in \partial^* S^{ \varepsilon } : \lv P_{ T_{ ( x , z ) } ( \partial^* S^{ \varepsilon } ) } ( \e_z ) \rv^2 \leq 1 / 2 \bigr\}
		\]
		and let $ \phi \in C^1_c ( \R^{ n + 1 } \times ( 0 , \infty ) ) $ be arbitrary. We define the approximate velocity of $ E^{ \varepsilon } $ by
		\[
		V^{ \varepsilon } ( x , t ) :=
		\begin{cases}
			- \frac{ \mathbf{ q } ( \nu_{ E^{ \varepsilon } } ) }{ \lv \mathbf{ p } ( \nu_{ E^{ \varepsilon } } ) \rv } ( x , t ) &( ( x , t ) \in \kappa_{ \varepsilon } ( \partial^* S^{ \varepsilon } \setminus \Sigma^{ \varepsilon } ) )\\
			0 &( ( x , t ) \in \kappa_{ \varepsilon } ( \Sigma^{ \varepsilon } ) ).
		\end{cases}
		\]
		Since the map $ \kappa_{ \varepsilon } $ shrinks $ z $-variable by $ \varepsilon $, the following hold:
		\begin{align}
			\frac{ \mathbf{ q } ( \nu_{ S^{ \varepsilon } } ) }{ \lv \mathbf{ p } ( \nu_{ S^{ \varepsilon } } ) \rv } ( x , z ) &= \varepsilon \frac{ \mathbf{ q } ( \nu_{ E^{ \varepsilon } } ) }{ \lv \mathbf{ p } ( \nu_{ E^{ \varepsilon } } ) \rv } ( x , t ), \quad t = \varepsilon z, \label{relationship_nu_S_E}\\
			\int_{ E^{ \varepsilon } } \pt \phi\,d x d t &= \int_{ S^{ \varepsilon } } \partial_z \phi\,d x d z. \label{mass_t_derivatibve_S_E}
		\end{align}
		Thus, by \eqref{relationship_nu_S_E} and \eqref{mass_t_derivatibve_S_E}, we calculate
		\begin{equation}
			\label{calc_ptE}
			\begin{split}
				&\int_{ E^{ \varepsilon } } \pt \phi\,d x d t = \int_{ S^{ \varepsilon } } \partial_z \phi\,d x d z = \int_{ ( M \times ( 0 , \infty ) ) \cap \partial^* S^{ \varepsilon } } ( \chi_{ \Sigma^{ \varepsilon } } + \chi_{ \partial^* S^{ \varepsilon } \setminus \Sigma^{ \varepsilon } } ) \phi \mathbf{ q } ( \nu_{ S^{ \varepsilon } } )\,d \mathcal{ H }^{ n + 1 }\\
				&= \int_{ ( M \times ( 0 , \infty ) ) \cap \partial^* S^{ \varepsilon } } \chi_{ \Sigma^{ \varepsilon } } \phi \mathbf{ q } ( \nu_{ S^{ \varepsilon } } )\,d \mathcal{ H }^{ n + 1 } + \int_0^{ \infty } \int_{ M \cap \partial^* S^{ \varepsilon }_z } \chi_{ \partial^* S^{ \varepsilon } \setminus \Sigma^{ \varepsilon } } \phi \frac{ \mathbf{ q } ( \nu_{ S^{ \varepsilon } } ) }{ \lv \mathbf{ p } ( \nu_{ S^{ \varepsilon } } ) \rv } \,d \mathcal{ H }^n d z\\
				&= \int_{ ( \partial^* S^{ \varepsilon } )_i } \chi_{ \Sigma^{ \varepsilon } } \phi \mathbf{ q } ( \nu_{ S^{ \varepsilon } } )\,d \mathcal{ H }^{ n + 1 } - \int_0^{ \infty } \int_{ ( \partial^* E^{ \varepsilon }_t )_i } \phi V^{ \varepsilon }\,d \mathcal{ H }^n d t,
			\end{split}
		\end{equation}
		where we used the co-area formula and $ \mathbf{ q } ( \nu_{ S^{ \varepsilon } } ) = 0 $ on $ \partial M $. From Lemma \ref{time_disconti_0}, we have
		\[
		\lim_{ \varepsilon \to 0 } \int_{ ( \partial^* S^{ \varepsilon } )_i } \chi_{ \Sigma^{ \varepsilon } } \phi \mathbf{ q } ( \nu_{ S^{ \varepsilon } } )\,d \mathcal{ H }^{ n + 1 } = 0.
		\]
		It follows from a simple geometric argument that $ \lv \mathbf{ p } ( \nu_{ S^{ \varepsilon } } ) \rv = \lv P_{ T_{ ( x , z ) } ( \partial^* S^{ \varepsilon } ) } ( \e_z ) \rv $, and by the definition of $ \Sigma^{ \varepsilon } $, we see that $ 1 / 2 < \lv \mathbf{ p } ( \nu_{ S^{ \varepsilon } } ) \rv^2 $ on $ \partial^* S^{ \varepsilon } \setminus \Sigma^{ \varepsilon } $. Hence, by Lemma \ref{E_L_eq} (1), we see that
		\[
		\Biggl( \chi_{ \partial^* S^{ \varepsilon } \setminus \Sigma^{ \varepsilon } } \frac{ \mathbf{ q } ( \nu_{ S^{ \varepsilon } } ) }{ \lv \mathbf{ p } ( \nu_{ S^{ \varepsilon } } ) \rv } \Biggr)^2 \leq 2 \varepsilon^2 \lv H^{ \varepsilon } \rv^2 \text{ in } M^{ \circ }.
		\]
		Therefore, by \eqref{L2MC_estimate}, we compute
		\begin{equation*}
			\begin{split}
				&\int_0^{ \infty } \int_{ ( \partial^* E^{ \varepsilon }_t )_i } \lv V^{ \varepsilon } \rv^2\,d \mathcal{ H }^n d t = \int_0^{ \infty } \int_{ ( \partial^* S^{ \varepsilon }_t )_i } \frac{ 1 }{ \varepsilon } \Biggl( \chi_{ \partial^* S^{ \varepsilon } \setminus \Sigma^{ \varepsilon } } \frac{ \mathbf{ q } ( \nu_{ S^{ \varepsilon } } ) }{ \lv \mathbf{ p } ( \nu_{ S^{ \varepsilon } } ) \rv } \Biggr)^2\,d \mathcal{ H }^n d z\\
				&\leq 2 \int_0^{ \infty } \int_{ ( \partial^* S^{ \varepsilon }_t )_i } \varepsilon \lv H^{ \varepsilon } \rv^2\,d \mathcal{ H }^n d z \leq 2 \int_{ ( \partial^* S^{ \varepsilon } )_i } \varepsilon \lv H^{ \varepsilon } \rv^2\,d \mathcal{ H }^{ n + 1 } \leq 2 \mathcal{ H }^n ( \partial^* E_0 ),
			\end{split}
		\end{equation*}
		where we used the co-area formula. Thus, taking the limit $ \varepsilon \to 0 $ in \eqref{calc_ptE}, we obtain
		\begin{equation}
			\label{V_estimate}
			\Biggl\lv \int_E \pt \phi\, d x d t \Biggr\rv \leq C \lim_{ \varepsilon \to 0 } \Biggl( \int_0^{ \infty } \int_{ ( \partial^* E^{ \varepsilon }_t )_i } \lv \phi \rv^2\,d\mathcal{ H }^n d t \Biggr)^{ \frac{ 1 }{ 2 } }
		\end{equation}
		for some constant $ C > 0 $ depending only on $ E_0 $.
		\vskip.3\baselineskip
		Let $ G \subset [ 0 , \infty ) $ be a set of Lebesgue points of $ f ( t ) := \int_{ E_t } d x $. It follows from a standard result in measure theory that the set $ G $ is full measure in $ [ 0 , \infty ) $. Let $ t_1 < t_2 $ be arbitrary points in $ G $. We choose $ \phi $ as depending only on time, and by approximating $ \phi \to \chi_{ [ t_1 , t_2 ] } $, we obtain the following from \eqref{themassbddE} and \eqref{V_estimate}:
		\begin{equation*}
			\lv \mathcal{ L }^{ n + 1 } ( E_{ t_2 } ) - \mathcal{ L }^{ n + 1 } ( E_{ t_1 } ) \rv \leq C \mathcal{ H }^n ( \partial^* E_0 )^{ \frac{ 1 }{ 2 } } ( t_2 - t_1 )^{ \frac{ 1 }{ 2 } },
		\end{equation*}
		where we used the co-area formula. This completes the proof.
	\end{proof}
	\begin{remark}
		By Lemma \ref{Holderconti_E_t}, one may re-define the set $ E $ so that $ \chi_{ E_t } $ is $ 1 / 2 $-H\"{o}lder continuous in $ L^1 $ to the time direction for all $ t > 0 $. We also note that, by approaching from the left, one can re-define the Brakke flow $ ( \mu_t , \nu_t ) $ so that $ \mu_t + a \nu_t $ is left-continuous for all $ t \geq 0 $ from Lemma \ref{Basic_props} while keeping the Brakke inequality \eqref{Brakkeineq}. Furthermore, we can prove that the property $ \lV \D \chi_{ E_t } \rV \llcorner_{ M^{ \circ } } + a \lV \D \chi_{ E_t } \rV \llcorner_{ \partial M } \leq \mu_t + a \nu_t $ also holds for any $ t \geq 0 $. Let $ G $ be the full measure set of $ [ 0 , \infty ) $ where Proposition \ref{Basic_props} (2) holds. For any $ t \notin G $, we can choose a sequence $ \{ t_i \}_i \subset G $ approaching from the left to $ t $. Since $ \chi_{ E_{ t_i } } \to \chi_{ E_t } $ in $ L^1 $, we have, for any $ \phi \in C^0_c ( \R^{ n + 1 } ; [ 0 , \infty ) ) $,
		\begin{equation*}
			\begin{split}
				&( \lV \D \chi_{ E_t } \rV \llcorner_{ M^{ \circ } } + a \lV \D \chi_{ E_t } \rV \llcorner_{ \partial M } ) ( \phi ) \leq \liminf_{ i \to \infty } ( \lV \D \chi_{ E_{ t_i } } \rV \llcorner_{ M^{ \circ } } + a \lV \D \chi_{ E_{ t_i } } \rV \llcorner_{ \partial M } ) ( \phi )\\
				&\leq \liminf_{ i \to \infty } ( \mu_{ t_i } + a \nu_{ t_i } ) ( \phi ) = ( \mu_t + a \nu_t ) ( \phi ),
			\end{split}
		\end{equation*}
		where we used the lower semi-continuous property of variation measures, Proposition \ref{enhancedmotion} (2), and the left-continuity of $ \mu_t + a \nu_t $. This completes all of the claims in Theorem \ref{main_results}.
	\end{remark}
	
	\bibliography{myref.bib}
	\bibliographystyle{abbrv}
	
\end{document}